\definecolor{LinkColor}{rgb}{0,0,1}
\definecolor{LinkColor2}{rgb}{1,0,0}
\definecolor{lbcolor}{rgb}{0.85,0.85,0.85}
\definecolor{FrameColor}{rgb}{0.85,0.85,0.85}
\newcolumntype{L}[1]{>{\raggedright\arraybackslash}p{#1}} 
\newcolumntype{C}[1]{>{\centering\arraybackslash}p{#1}} 
\newcolumntype{R}[1]{>{\raggedleft\arraybackslash}p{#1}} 
\numberwithin{equation}{section}
\newcommand*\patchAmsMathEnvironmentForLineno[1]{%
	\expandafter\let\csname old#1\expandafter\endcsname\csname #1\endcsname
	\expandafter\let\csname oldend#1\expandafter\endcsname\csname end#1\endcsname
	\renewenvironment{#1}%
	{\linenomath\csname old#1\endcsname}%
	{\csname oldend#1\endcsname\endlinenomath}}%
\newcommand*\patchBothAmsMathEnvironmentsForLineno[1]{%
	\patchAmsMathEnvironmentForLineno{#1}%
	\patchAmsMathEnvironmentForLineno{#1*}}%
\newtheoremstyle{tstyle}
{15pt}	
{5pt}	
{\itshape}	
{}	
{\bfseries}	
{.}	
{0.5em}	
{}	
\theoremstyle{tstyle}
\newtheorem{theorem}{Theorem}[section]
\newtheorem{lemma}[theorem]{Lemma}
\newtheorem{definition}[theorem]{Definition}
\newtheorem{remark}[theorem]{Remark}
\newtheoremstyle{cstyle}
{15pt}	
{5pt}	
{}	
{}	
{\bfseries}	
{}	
{0.2222em}	
{}	
\theoremstyle{cstyle}
\g@addto@macro{\thm@space@setup}{\thm@headpunct{}}
\renewenvironment{proof}[1][\proofname]{\par
	\pushQED{\qed}%
	\normalfont \topsep6\p@\@plus6\p@\relax
	\trivlist
	\item[\hskip\labelsep
	\bfseries
	#1\@addpunct{\,}]\ignorespaces
}{%
	\popQED\endtrivlist\@endpefalse
}
\g@addto@macro{\thm@space@setup}{\thm@headpunct{}}
\newenvironment{sketch-proof}[1][Sketch of the proof]{\par
	\pushQED{\qed}%
	\normalfont \topsep6\p@\@plus6\p@\relax
	\trivlist
	\item[\hskip\labelsep
	\bfseries
	#1\@addpunct{\,}]\ignorespaces
}{%
	\popQED\endtrivlist\@endpefalse
}
\def\R{\mathbb R}
\def\N{\mathbb N}
\def\n{\mathbf n}
\def\V{{\mathcal V^\kappa}}
\def\Vm{{\mathcal V^\kappa_m}}
\def\Vo{{\mathcal V^\kappa_0}}
\def\Va{{\mathcal V^{\kappa\ast}_0}}
\def\eps{\varepsilon}
\def\ddt{\frac{\mathrm d}{\mathrm dt}}
\def\dx{\;\mathrm dx}
\def\dt{\;\mathrm dt}
\def\ds{\;\mathrm ds}
\def\dxt{\;\mathrm d(x,t)}
\def\dS{\;\mathrm dS}
\def\dSx{\;\mathrm dS(x)}
\def\del{\partial}
\def\delt{\partial_{t}}
\def\deln{\partial_\n}
\def\scdot{{\hspace{1pt}\cdot\hspace{1pt}}}
\def\grad{\nabla}
\def\gradg{\nabla_\Gamma}
\def\laplace{\Delta}
\def\nmo{(-\laplace)^{-1}}
\def\nmg{(-\laplace_\Gamma)^{-1}}
\def\tand{\quad\text{and}\quad}
\def\twith{\quad\text{with}\quad}
\def\wto{\rightharpoonup}
\def\wsto{\overset{*}{\rightharpoonup}}
\def\itemi{\item[\textnormal{(i)}]}
\def\itemii{\item[\textnormal{(ii)}]}
\def\itemiii{\item[\textnormal{(iii)}]}
\newcommand{\Underset}[3][0pt]{\ensuremath{\underset{\raise#1\hbox{\small\ensuremath{#2}}}{#3}}}
\newcommand{\Overset}[3][0pt]{\ensuremath{\overset{\raise#1\hbox{\small\ensuremath{#2}}}{#3}}}
\newcommand\numbereq{%
	\ifmeasuring@\else
	\refstepcounter{equation}%
	\fi
	\tag{\theequation}%
}
\begin{document}
	
%
%
	
\begin{center}	
	\LARGE{Weak solutions of the Cahn--Hilliard system with dynamic boundary conditions: A gradient flow approach}
\end{center}

\bigskip

\begin{center}	
	\normalsize{Harald Garcke and Patrik Knopf}\\[1mm]
	\textit{Fakult\"at f\"ur Mathematik, Universit\"at Regensburg, 93053 Regensburg, Germany}\\[1mm]
	\texttt{\href{mailto:Harald.Garcke@ur.de}{Harald.Garcke@ur.de}, \href{mailto:Patrik.Knopf@ur.de}{Patrik.Knopf@ur.de}}
\end{center}
	
%

\bigskip

\begin{center}	
	\textit{This is a preprint version of the paper. Please cite as:}\\
	\textit{H.~Garcke and P.~Knopf, SIAM J. Math. Anal. 52-1 (2020), pp. 340-369.}\\
	\url{https://doi.org/10.1137/19M1258840}
\end{center}

\bigskip

\begin{abstract}
  The Cahn--Hilliard equation is the most common model to
  describe phase separation processes of a mixture of two
  components. For a better description of short-range interactions of
  the material with the solid wall, various dynamic boundary
  conditions have been considered in recent times. New models with
  dynamic boundary conditions have been proposed recently by C.\,Liu
  and H.\,Wu \cite{liu-wu}.  We prove the existence of weak solutions
  to these new models by interpreting the problem as a suitable
  gradient flow of a total free energy which contains volume as well
  as surface contributions. The formulation involves an inner product
  which couples bulk and surface quantities in an appropriate way.  We
  use an implicit time discretization and show that the obtained
  approximate solutions converge to a weak solution of the
  Cahn--Hilliard system. This allows us to
  substantially improve earlier results which needed strong
  assumptions on the geometry of the domain. Furthermore, we prove
  that this weak solution is unique. \\[1ex]
	\noindent\textit{Keywords:} Cahn--Hilliard equation, dynamic boundary conditions, gradient flow.\\[1ex]
	\noindent\textit{MSC Classification:} 35A01, 35A02, 35A15 
\end{abstract}

\bigskip

%
%

\section{Introduction}
The Cahn--Hilliard equation was originally derived to model spinodal
decomposition in binary alloys. Later it was noticed that the
Cahn--Hilliard equations also describes later stages of the evolution
of phase transition phenomena like Ostwald ripening. Here a particular
important aspect of the Cahn--Hilliard equation is that topological
changes in the interface can be handled directly. This is in contrast
to a classical free boundary description where singularities occur in
cases where the topology changes. Later new application of the
Cahn--Hilliard model appeared in the literature in such areas as
imaging sciences, non-isothermal phase transitions and two-phase
flow. In certain applications it also turned out to be essential to
model boundary effects more accurately. In order to do so several
dynamic boundary conditions have been proposed in the literature which
we are going to review in the following. In this paper we will focus
on a new dynamic boundary condition proposed recently by \cite{liu-wu}
which is in particular important for hydrodynamic applications. It is
expected that these boundary conditions will have an important impact
on the correct modeling of contact line phenomena. Let us now describe
the models studied in the literature in more detail.

Let $T$ and $\eps$ be positive real numbers and let $\Omega\subset\R^d$ (where $d\in\{2,3\}$) be a bounded domain with boundary $\Gamma:=\del\Omega$. The unit outer normal vector on $\Gamma$ will be denoted by $\n=\n(x)$. Then the standard \textit{Cahn--Hilliard equation} (cf. \cite{cahn-hilliard}) reads as follows:
\begin{subequations}
	\begin{align}
	\label{PDE:CH1}
	\delt\phi &= \laplace \mu, \\
	\label{PDE:CH2}
	\mu &= -\eps \laplace \phi + \frac 1 \eps F'(\phi).
	\end{align}
	Usually, this equation is endowed with an initial condition
	\begin{align}
	\label{IC:CH}
	\phi\vert_{t=0}&=\phi_0.
	\end{align}
\end{subequations}

\pagebreak[2]

\noindent Here, $\phi=\phi(x,t)$ and $\mu=\mu(x,t)$ are functions that depend on
time $t\ge 0$ and position $x\in\Omega$. The symbol $\delt$ denotes
the partial derivative with respect to time and $\Delta$ is the
Laplace operator that acts on the variables $x\in\Omega$. The phase
field $\phi$ represents the difference $\phi_1-\phi_2$ of two local
relative concentrations $\phi_i$, $i=1,2$. This means that the regions $\{x\in\Omega: \phi(x)  = 1\}$ and $\{x\in\Omega: \phi(x)  = -1\}$
correspond to the pure phases of the materials while
$\{x\in\Omega: -1<  \phi(x)  <1\}$ represents the diffuse interface between them. The thickness of this interface is proportional to the parameter $\eps>0$. Therefore, $\eps$ is usually chosen very small. The chemical potential $\mu$ can be understood as the Fr\'echet derivative of
the bulk free energy
\begin{align}
E_\text{bulk}(\phi) = \int\limits_\Omega \frac \eps 2|\grad\phi|^2 + \frac 1 \eps F(\phi) \dx.
\end{align}
The function $F$ represents the bulk potential which typically is of
double well form. A common choice is the smooth \textit{double well potential}
\begin{align}
\label{POT:DW}
W_\text{dw}(\phi)=\theta(\phi^2-1)^2, \qquad \theta>0
\end{align} 
which has two minima at $\phi=\pm 1$ and a local (unstable) maximum at $\phi=0$.
The time-evolution of $\phi$ is considered in a bounded
domain and hence suitable boundary conditions have to be
imposed. The homogeneous Neumann conditions
\begin{alignat}{2}
\label{COND:NMU}
\del_\n \mu &= 0 &&\quad\text{on}\;\Gamma\times]0,T[,\\
\label{COND:NPHI}
\del_\n \phi &= 0 &&\quad\text{on}\;\Gamma\times]0,T[
\end{alignat}
are the classical choice. From the no-flux condition \eqref{COND:NMU} we can deduce mass conservation in the bulk
\begin{align}
\int\limits_\Omega \phi(t) \dx = \int\limits_\Omega \phi(0) \dx, \quad t\in[0,T]
\end{align}
and conditions \eqref{COND:NMU},\eqref{COND:NPHI} imply that the bulk free energy is decreasing in the following way:
\begin{align}
\ddt E_\text{bulk}\big(\phi(t)\big) + \int\limits_\Omega |\grad\mu(t)|^2 \dx = 0, \quad t\in]0,T[.
\end{align}
The Cahn--Hilliard equation \eqref{PDE:CH1},\eqref{PDE:CH2} with the
initial condition \eqref{IC:CH} and the homogeneous Neumann conditions
\eqref{COND:NMU} and \eqref{COND:NPHI} is already very well
understood. It has been investigated from many different viewpoints,
for instance see
\cite{abels-wilke,bates-fife,cherfils,
	elliotgarcke,elliot,goldstein,grinfeld,pego,rybka,zheng}.\\[1ex]
However, especially for certain materials, the ansatz
\eqref{COND:NPHI} is not satisfactory as it neglects certain
additional influences of
the boundary to the dynamics in the bulk. For a better description of
interactions between the wall and the mixture, physicists introduced a
surface free energy functional given by
\begin{align}
E_\text{surf}(\phi) = \int\limits_\Gamma \frac {\kappa\eps} 2 |\gradg \phi|^2 + \frac 1 \eps G(\phi) \dS
\end{align}
where $\gradg$ denotes the surface gradient operator on $\Gamma$, $G$
is a surface potential and $\kappa$ is a nonnegative parameter that is
related to interfacial effects at the boundary. In the case $\kappa = 0$ the problem
is related to the moving contact line problem that is studied in
\cite{thompson-robbins}. Moreover, several dynamic boundary conditions
have been proposed in the literature to replace the homogeneous Neumann
conditions (see, e.g.,
\cite{CFP,colli-fukao-ch,colli-sprekels,liero,mininni,miranville-zelik,motoda,racke-zheng,WZ}). Results
for the Allen--Cahn equation (which is another phase-field equation,
cf. \cite{allen-cahn}) with dynamic boundary conditions can be found,
for instance, in \cite{colli-fukao,colli-fukao-lam,
	colli-gilardi}.\\[1ex]
We give some concrete examples of dynamic boundary conditions for the Cahn--Hilliard equation:
\begin{itemize}
	\item The boundary condition
	\begin{align}
	\label{BC:AC}
	\eps\delt\phi = \kappa\eps \laplace_\Gamma \phi - \eps\deln\phi - \frac 1 \eps G'(\phi) \quad\text{on}\;\Gamma\times]0,T[
	\end{align}
	was suggested (e.g. in \cite{kenzler}) to replace the
	condition \eqref{COND:NPHI} and analyzed for example in
	\cite{CGM}. The symbol $\laplace_\Gamma$ denotes the
	Laplace-Beltrami operator on $\Gamma$. Equation \eqref{BC:AC}
	is a surface Allen--Cahn equation which has $\deln\phi$ as an
	additional source term.
	\item The coupled boundary condition
	\begin{align}
	\label{BC:WENZ}
	\left\{
	\begin{aligned}
	\delt\phi &= \sigma\laplace_\Gamma \mu - \deln\mu  &&\text{on}\;\Gamma\times]0,T[\,,\\
	\mu &= -\kappa\eps \laplace_\Gamma \phi + \eps\deln\phi + \frac 1 \eps G'(\phi) &&\text{on}\;\Gamma\times]0,T[
	\end{aligned}
	\right.
	\end{align}
	for some $\sigma\ge 0$ was proposed in \cite{Gal1,goldstein} to replace both \eqref{COND:NMU} and \eqref{COND:NPHI}. In the case $\sigma=0$ (which was first proposed in \cite{Gal1}), this type of dynamic boundary condition is called a \textit{Wentzell boundary condition}. Furthermore, the first line of \eqref{BC:WENZ} directly implies that the total (i.e., bulk plus boundary) mass is conserved.
	\item Another very generic boundary condition to replace \eqref{COND:NPHI} is
	\begin{align}
	\label{BC:EV}
	\left\{
	\begin{aligned}
	\delt\phi &=\laplace_\Gamma \mu_\Gamma  &&\text{on}\;\Gamma\times]0,T[\,,\\
	\mu_\Gamma &= -\kappa\eps \laplace_\Gamma \phi + \eps\deln\phi + \frac 1 \eps G'(\phi) &&\text{on}\;\Gamma\times]0,T[.
	\end{aligned}
	\right.
	\end{align}
	It was derived in \cite{liu-wu} by an energetic variational approach which is based on the following physical principles: Separate conservation of mass both in the bulk and on the surface, dissipation of the total free energy (that is $E:= E_\text{bulk} + E_\text{surf}$) and the force balance both in the bulk and on the boundary. 
	
	Similar to the situation in the bulk, the chemical potential on the boundary $\mu_\Gamma$ can be described by the Fr\'echet derivative of the surface free energy $E_\text{surf}$. However, the interaction term $\eps \del_\n \phi$ must be added. We point out that $\mu_\Gamma$ does not necessarily coincide with the trace of $\mu$ on $\Gamma$ but is to be understood as an independent variable.
\end{itemize}
A mathematical correspondence between the above dynamic boundary conditions and the total free energy
\begin{align}
E(\phi) = E_\text{bulk}(\phi) + E_\text{surf}(\phi)
\end{align}
will be explained in Section 3.\\[1ex]
In this paper, we investigate the Cahn--Hilliard system subject to
the dynamic boundary condition \eqref{BC:EV}. This means that the overall system reads as follows: 
\begin{subequations}
	\label{CHLW:EPS}
	\begin{align}
	&\delt\phi = \laplace \mu \quad\twith \mu = -\eps \laplace \phi + \frac 1 \eps F'(\phi)
	&& \text{in} \;\; \Omega\times]0,T[, \\	
	&\del_\n \mu = 0 
	&& \text{on} \;\; \Gamma\times]0,T[, \\	
	&\delt\phi = \laplace_\Gamma \mu_\Gamma
	\twith \mu_\Gamma = -\kappa\eps \laplace_\Gamma \phi + \eps\del_\n \phi + \frac 1 \eps G'(\phi)
	&& \text{on} \;\; \Gamma\times]0,T[, \\	
	&\phi(\cdot,0) = \phi_0
	&& \text{in} \;\; \Omega.
	\end{align}
\end{subequations}
If a solution of \eqref{CHLW:EPS} is sufficiently regular, it directly follows that the mass in the bulk and the mass on the surface are conserved:
\begin{align}
\int\limits_\Omega \phi(t) \dx = \int\limits_\Omega \phi(0) \dx \tand
\int\limits_\Gamma \phi(t) \dS = \int\limits_\Gamma \phi(0) \dS, \quad t\in[0,T].
\end{align}
Moreover, the total free energy is still decreasing in time in the following sense:
\begin{align}
\ddt E\big(\phi(t)\big) + \int\limits_\Omega |\grad\mu(t)|^2 \dx 
+ \int\limits_\Gamma |\gradg \mu_\Gamma(t)|^2 \dx = 0, \quad t\in]0,T[.
\end{align}
Existence and uniqueness of weak and strong solutions of the system \eqref{CHLW:EPS} have been established by C. Liu and H. Wu in \cite{liu-wu}. The idea of their proof is to construct solutions of a regularized system where the equations for $\mu$ and $\mu_\Gamma$ are replaced by
\begin{align*}
\mu = -\eps \laplace \phi +\alpha \delt\phi + \frac 1 \eps F'(\phi) \tand \mu_\Gamma = -\kappa\eps \laplace_\Gamma \phi + \alpha\delt\phi + \eps\del_\n \phi + \frac 1 \eps G'(\phi).
\end{align*}
Then a solution of the original problem can be found by taking the limit $\alpha\to 0$. However, in the case $\kappa=0$, their proof requires very strong assumptions on the domain $\Omega$ and its boundary $\Gamma$. To be precise, they need the requirement
\begin{align}
\label{COND:DOM}
c_\mathcal{R}|\Gamma|^\frac{1}{2}|\Omega|^{-1} < 1
\end{align}
where $c_\mathcal{R}$ is a constant that results from the inverse trace theorem.\\[1ex]
In this paper, we use a different ansatz to construct weak solutions of the system \eqref{CHLW:EPS}. In our approach the condition \eqref{COND:DOM} is not necessary which is a substantial improvement. However, on the other hand, our notion of a weak solution (cf. Definition \ref{DEF:WS}) is slightly weaker than that in \cite{liu-wu} so both results have their merits. We will show in Section 3 that \eqref{CHLW:EPS} can be regarded as a gradient flow equation of the total free energy, namely
\begin{align}
\langle \delt\phi ,\eta \rangle = -\frac{\delta E}{\delta \phi}(\phi)[\eta] 	
\end{align}
for all admissible test functions $\eta$ where $\langle\cdot,\cdot\rangle$ is a suitable inner product on a certain function space. This representation can be used to construct a weak solution of the system \eqref{CHLW:EPS} by implicit time discretization which will be done in Section 4. In Section 5, we will establish a uniqueness result for the obtained weak solution. Finally, in Section 6, we present several plots of two numerical simulations. In particular, they demonstrate the influence of mass conservation on the boundary. For simplicity and since it does not play any role in the analysis, we will set $\eps=1$ in Sections~3-5.

%
%

\section{Preliminaries}

In this section we introduce some preliminaries that we will use in the rest of this paper:

\begin{enumerate}
	
	\item[\textbf{(P1)}] In the existence and uniqueness results (see Sections 4 and 5), $\Omega\subset\R^d$ (with
	$d\in\{2,3\}$) denotes a bounded domain with Lipschitz boundary
	$\Gamma:=\del\Omega$. In Sections 1-3, however, we assume additionally that $\Omega$ has a $C^2$-boundary. $\n$ denotes the unit outer normal
	vector on $\Gamma$ and $\del_\n$ is the normal derivative on
	$\Gamma$. We denote by $|\Omega|$ the $d$-dimensional Lebesgue
	measure of $\Omega$ and by $|\Gamma|$ the $(d-1)$-dimensional surface
	measure of $\Gamma$. For any $s>0$ we will also write
	$\Omega_s:=\Omega\times]0,s[$ and $\Gamma_s:=\Gamma\times]0,s[$.
	
	\item[\textbf{(P2)}] We assume that the potentials $F$ and $G$ are bounded from below by
	\begin{align*}
	F(\varphi) \ge C_F \tand\quad G(\varphi) \ge C_G, \qquad\text{for all}\; \varphi\in\R
	\end{align*}	
	with constants $C_F,C_G\in\R$. Moreover, we assume that $F$ and $G$ can be written as
	\begin{align*}
	F(\varphi)=F_1(\varphi) + F_2(\varphi) \tand\quad G(\varphi)=G_1(\varphi) + G_2(\varphi)
	, \qquad\text{for all}\; \varphi\in\R
	\end{align*}
	where 
	\begin{enumerate}
		\item[\textbf{(P2.1)}] $F_1,F_2,G_1,G_2\in C^1(\R)$,
		\item[\textbf{(P2.2)}] $F_1$ and $G_1$ are convex and nonnegative,
		\item[\textbf{(P2.3)}] For any $\delta>0$ there exists constants $A_F^\delta,A_G^\delta \ge 0$ such that
		\begin{align*}
		|F_1'(\varphi)|\le \delta F_1(\varphi) + A_F^\delta \tand |G_1'(\varphi)|\le \delta G_1(\varphi) + A_G^\delta
		\end{align*}
		for all $\varphi\in\R$.
		\item[\textbf{(P2.4)}] 
		There exist constants $B_F,B_G \ge 0$ such that 
		\begin{align*}
		|F_2'(\varphi)|\le B_F\big(|\varphi|+1\big) \tand |G_2'(\varphi)|\le B_G\big(|\varphi|+1\big)
		\end{align*}
		for all $\varphi\in\R$.
	\end{enumerate}
	
	\item[\textbf{(P3)}] For any Banach space $X$, its norm will be denoted by $\|\cdot\|_X$. The symbol $\langle\cdot,\cdot\rangle_{X^*,X}$ denotes the dual pairing of $X$ and its dual space $X^*$. If $X$ is a Hilbert space, its inner product is denoted by $(\cdot,\cdot)_X$.
	
	\item[\textbf{(P4)}] For any $1\le p\le \infty$, $L^p(\Omega)$ and
	$L^p(\Gamma)$ stand for the Lebesgue spaces that are equipped with
	the standard norms $\|\cdot\|_{L^p(\Omega)}$ and
	$\|\cdot\|_{L^p(\Gamma)}$. For $s\ge 0$ and $1\le p\le \infty$, the
	symbols $W^{s,p}(\Omega)$ and $W^{s,p}(\Gamma)$ denote the Sobolev
	spaces with corresponding norms $\|\cdot\|_{W^{s,p}(\Omega)}$ and
	$\|\cdot\|_{W^{s,p}(\Gamma)}$. Note that $W^{0,p}$ can be identified
	with $L^p$. All Lebesgue spaces and Sobolev spaces are Banach spaces
	and if $p=2$, they are Hilbert spaces. In this case we will
	write $H^s(\Omega)=W^{s,2}(\Omega)$ and
	$H^s(\Gamma)=W^{s,2}(\Gamma)$.
	
	\item[\textbf{(P5)}] In general, we will use the symbol $\cdot\vert_\Gamma$ to denote the trace operator. If $1\le p\le \infty$, $s>\frac 1 p$ and $s-\frac 1 p$ is not an integer, the trace operator is uniquely determined and lies in $\mathcal L\big(W^{s,p}(\Omega); W^{s-1/p,p}(\Gamma)\big)$, i.e., it is a linear and bounded operator from $W^{s,p}(\Omega)$ to $W^{s-1/p,p}(\Gamma)$. For brevity, we will sometimes write $\varphi$ instead of $\varphi\vert_\Gamma$ if it is clear that we are referring to the trace of $\varphi$.
	
	\item[\textbf{(P6)}] By $H^1(\Omega)^*$ and $H^1(\Gamma)^*$ we denote the dual spaces of $H^1(\Omega)$ and $H^1(\Gamma)$. For functions $\varphi\in H^1(\Omega)^*$ and $\psi\in H^1(\Omega)^*$ we denote their generalized average by
	\begin{align*}
	\langle \varphi \rangle_\Omega := \frac{1}{|\Omega|} \langle \varphi, 1 \rangle_{H^1(\Omega)^*,H^1(\Omega)}
	\tand
	\langle \psi \rangle_\Gamma := \frac{1}{|\Gamma|} \langle \psi, 1 \rangle_{H^1(\Gamma)^*,H^1(\Gamma)}.
	\end{align*}
	If $\varphi\in L^2(\Omega)$ and $\psi\in L^2(\Gamma)$ the above formulas reduce to
	\begin{align*}
	\langle \varphi \rangle_\Omega = \frac{1}{|\Omega|} \int\limits_\Omega \varphi(x) \dx
	\tand
	\langle \psi \rangle_\Gamma = \frac{1}{|\Gamma|} \int\limits_\Gamma \psi(x) \dSx.
	\end{align*}
	
	\item[\textbf{(P7)}] For any function $\varphi\in H^1(\Omega)^*$ with $\langle\varphi\rangle_\Omega =0$, the Neumann problem
	\begin{align*}
	-\laplace \mu = \varphi \quad\text{in}\; \Omega,\qquad \del_\n \mu = 0 \quad\text{on}\; \Gamma
	\end{align*}
	has a unique weak solution $\mu\in H^1(\Omega)$ with
	$\langle\mu\rangle_\Omega =0$. For any $\psi\in H^1(\Gamma)^*$ with
	$\langle\psi\rangle_\Gamma=0$ the equation
	\begin{align*}
	-\laplace_\Gamma \nu = \psi \quad\text{on}\; \Gamma 
	\end{align*}
	has a unique weak solution $\nu\in H^1(\Gamma)$ with  $\langle\nu\rangle_\Gamma =0$. Here, the symbol $\laplace_\Gamma$ denotes the Laplace-Beltrami operator. We will write
	\begin{align*}
	\nmo\varphi := \mu \tand \nmg\psi:=\nu
	\end{align*}
	to denote the above weak solutions.

	\item[\textbf{(P8)}] In this paper, the spaces $H^1(\Omega)^*$ and $H^1(\Gamma)^*$ will be endowed with the following norms (that are equivalent to the standard norm on these spaces):
	\begin{align*}
	\|\varphi\|_{H^1(\Omega)^*}^2
	&= \big\| \grad\nmo \big(\varphi-\langle \varphi\rangle_\Omega\big) \big\|_{L^2(\Omega)}^2
	+ |\langle \varphi\rangle_\Omega|^2,\\
	\|\psi\|_{H^1(\Gamma)^*}^2
	&= \big\| \grad_\Gamma \nmg \big(\psi-\langle \psi\rangle_\Gamma\big) \big\|_{L^2(\Gamma)}^2 
	+ |\langle \psi\rangle_\Gamma|^2.
	\end{align*} 
	
	\item[\textbf{(P9)}] We define the following sets:
	\begin{align}
	\V &:=	\begin{cases}
	\big\{ \varphi \in H^1(\Omega) \;\big\vert\; \varphi\vert_\Gamma
	\in H^1(\Gamma) \big\}, &\kappa >0,\\
	H^1(\Omega), &\kappa = 0,
	\end{cases}\\
	\Vm &:= \big\{ \varphi \in \V \;\big\vert\; \langle \varphi\rangle_\Omega = m_1\;\text{and}\; \langle\varphi\rangle_\Gamma = m_2 \big\}, \quad m=(m_1,m_2)\in \R^2,\\[4mm]
	\Vo &:= \mathcal V^\kappa_{(0,0)}.
	\end{align}
	Then $\V$ is a Hilbert-space with respect to the inner product
	\begin{align*}
	(\varphi,\psi)_\V := 
	\begin{cases}
	( \varphi , \psi )_{H^1(\Omega)} 
	+ ( \varphi , \psi )_{H^1(\Gamma)}, &\quad\kappa>0,\\
	( \varphi , \psi )_{H^1(\Omega)}, &\quad\kappa=0
	\end{cases}
	\end{align*}
	and its induced norm $\|\varphi\|_\V :=  (\varphi,\varphi)_\V^{1/2}$. 
	
	For $\varphi\in\Va$, $\Va$ being the dual space of $\Vo$, there exists
	a mean value free $u_{\Omega\varphi}\in H^1(\Omega)$ and mean value
	free $u_{\Gamma,\varphi}\in H^1(\Gamma)$ such that for all
	$\zeta\in\Vo$
	\begin{align*}
	\int_\Omega \nabla u_{\Omega,\varphi}\cdot\nabla\zeta \dx 
	+ \int_\Gamma\nabla_\Gamma u_{\Gamma,\varphi} \cdot\nabla_\Gamma \zeta \dS = \varphi(\zeta).
	\end{align*}
	Choosing functions
	$u_{\Omega,\psi}$ and $u_{\Gamma,\psi}$ for $\psi\in \Va$ an analogous way, we can define an inner product on
	$\Va$ by
	\begin{align*}
	\langle \varphi{,}\psi\rangle_{\Va} 
	= \int_\Omega \nabla u_{\Omega,\varphi} \cdot\nabla u_{\Omega,\psi} \dx
	+ \int_\Gamma\nabla_\Gamma u_{\Gamma,\varphi} \cdot\nabla_\Gamma u_{\Gamma,\psi} \dS. 
	\end{align*}
	We also define its induced norm $\|\varphi\|_\Va :=  \langle\varphi,\varphi\rangle_\Va^{1/2}\, $. Since $\Vo\subset\Va$ we can use this inner product and its induced norm also for functions in $\Vo$. For any $\varphi\in \Vo$, the functions $u_{\Omega,\varphi}$ and $u_{\Gamma,\varphi}$ can be expressed by
	\begin{align*}
	u_{\Omega,\varphi} = \nmo\varphi \tand u_{\Gamma,\varphi} = \nmg\varphi\big\vert_\Gamma\,.
	\end{align*}
	It even holds that $\|\cdot\|_\Va$ is a norm on $\Vo$ but, of course, $\Vo$ is not complete with respect to this norm.
	
\end{enumerate}

\begin{remark}\begin{itemize}
		\itemi  In dealing with weak solutions of the system \eqref{CHLW:EPS} (cf. Sections 4 and 5) it suffices to demand that the domain $\Omega$ has merely a Lipschitz-boundary. However, this is not enough to describe \eqref{CHLW:EPS} pointwisely as the Laplace-Beltrami operator is involved. Therefore, a $C^2$-boundary is demanded in the general assumption \textnormal{(P1)} .
		\itemii One can easily see that, according to (P2), the double well potential
		\begin{align*}
		W_\text{dw}(\phi)=\theta(\phi^2-1)^2, \qquad \theta>0
		\end{align*} 
		is a suitable choice for $F$ or $G$. However, the logarithmic
		potential
		\begin{align}
		\label{POT:LOG}
		\begin{aligned}
		W_\text{log}(\phi)= \frac\theta 2 \big((1-\phi)\ln(1-\phi) + (1-\phi)\ln(1-\phi)\big) + \frac{\theta_c}{2}(1-\phi^2),\\
		\quad 0<\theta_c<\theta,
		\end{aligned}
		\end{align}
		(which is defined only for $\phi\in\, ]-1,1[\,$) or the obstacle
		potential
		\begin{align}
		\label{POT:OBST}
		W_\text{obst}(\phi) = 
		\begin{cases}
		\theta(1-\phi^2), & \phi\in [-1,1],\\
		\infty, & \text{else},
		\end{cases}
		\qquad \theta>0,
		\end{align} 
		cannot be chosen as they do not satisfy the condition (P2).
		\itemiii Any nonnegative, convex, continuously differentiable function $\varphi\mapsto F_1(\varphi)$ (or $\varphi\mapsto G_1(\varphi)$ respectively) which grows polynomially to $+\infty$ as
		$|\varphi|\to\infty$ fulfills \textnormal{(P2.3)}. However, exponential growth is not
		allowed, see \cite{garckeelas}.       
	\end{itemize}
\end{remark}

%
%

\section{The gradient flow structure}

For simplicity, we set $\eps=1$. Provided that $\phi$, $\mu$ and $\mu_\Gamma$ are sufficiently regular, we can use the inner product $\langle \cdot,\cdot \rangle_\Va$ that was introduced in (P9) to describe system \eqref{CHLW:EPS} as a gradient flow equation of the total free energy: 
\begin{align}
\label{EQ:GFE}
\langle\delt\phi,\eta\rangle_\Va = -\frac{\delta E}{\delta \phi}(\phi)[\eta] \quad\text{for all}\;  \eta\in \Vo \cap L^\infty(\Omega),\; \eta\vert_\Gamma\in L^\infty(\Gamma). 
\end{align}
This holds because for any  $\eta\in \Vo\cap L^\infty(\Omega)$ with $\eta\vert_\Gamma\in L^\infty(\Gamma)$ , 
\begin{align*}
\langle\delt\phi,\eta\rangle_\Va 
& = \int_\Omega \hspace{-3pt} \grad\nmo\delt\phi\cdot\grad\nmo\eta\dx 
+ \hspace{-3pt}\int_\Gamma\hspace{-3pt}\gradg\nmg\delt\phi\cdot\gradg\nmg\eta\dS \\
& = -\int_\Omega \grad\mu\cdot\grad\nmo\eta\dx 
- \int_\Gamma\gradg\mu_\Gamma\cdot\gradg\nmg\eta\dS
\end{align*}
since $\delt\phi = \laplace\mu$ in $\Omega_T$ and $\delt\phi = \laplace_\Gamma\mu$ on $\Gamma_T$. Integration by parts (recall that $\deln\mu=0$) and the definition of the chemical potentials $\mu$ and $\mu_\Gamma$ imply that
\begin{align*}
\langle\delt\phi,\eta\rangle_\Va 
& = - \int_\Omega \mu\,\eta\dx 
- \int_\Gamma \mu_\Gamma\, \eta\dS \\
& = - \int_\Omega \big(-\laplace\phi + F'(\phi)\big)\eta\dx 
- \int_\Gamma \big(-\kappa\laplace_\Gamma\phi + \deln\phi + G'(\phi)\big)\eta\dS \\
&= -\frac{\delta E}{\delta \phi}(\phi)[\eta].
\end{align*}
This formal computation shows that the gradient flow equation \eqref{EQ:GFE} corresponds to the Cahn--Hilliard
equation with dynamic boundary conditions given by
\eqref{CHLW:EPS}. Formally speaking, we can say that this gradient flow is of type $H^{-1}$ both in the bulk and on the surface.
However, replacing the inner product
$\langle\cdot,\cdot\rangle_\Va$ in the gradient flow equation
\eqref{EQ:GFE} for the energy $E$ by a different inner product leads
to a different PDE in $\Omega$ and different boundary conditions on
$\Gamma$. We give some examples which also can be identified with a
gradient flow equation by a similar computation:
\begin{enumerate}[(i)]
	\item The \textit{Allen--Cahn equation}
	\begin{align*}
	\delt\phi &= \laplace\phi - F'(\phi) \quad\text{in}\;\Omega_T
	\end{align*}
	with the dynamic boundary condition
	\begin{align*}
	\delt\phi = \kappa\laplace_\Gamma\phi - \deln\phi - G'(\phi)  \quad\text{on}\;\Gamma_T
	\end{align*}
	is the gradient flow equation of the energy $E$ with respect to the inner product 
	\begin{align*}
	\langle \phi,\psi \rangle 
	:= \int\limits_\Omega \phi\,\psi\dx
	+ \int\limits_\Gamma \phi\,\psi \dS.
	\end{align*}
	Formally speaking, the gradient flow is of type $L^2$ both in the bulk and on the surface.
	\item The \textit{Allen--Cahn equation}
	\begin{align*}
	\delt\phi &= \laplace\phi - F'(\phi) \quad\text{in}\;\Omega_T
	\end{align*}
	with the dynamic boundary condition
	\begin{align*}
	\left\{
	\begin{aligned}
	\delt\phi &= \laplace_\Gamma \mu_\Gamma &&\text{on}\;\Gamma_T,\\
	\mu_\Gamma &= -\kappa\laplace_\Gamma \phi + \deln\phi + G'(\phi) &&\text{on}\;\Gamma_T
	\end{aligned}
	\right.
	\end{align*}
	is the gradient flow equation of the energy $E$ with respect to the inner product 
	\begin{align*}
	\langle \phi,\psi \rangle 
	:= \int\limits_\Omega \phi\,\psi\dx
	+ \int\limits_\Gamma \gradg\nmg\phi\cdot\gradg\nmg\psi \dS.
	\end{align*}
	This type of system has been analyzed, e.g., in
	\cite{colli-fukao}.
	Formally speaking, the gradient flow is of type $L^2$ in the bulk and of type $H^{-1}$ on the surface. By this boundary condition, the mass on the surface is conserved.
	\item The \textit{Cahn--Hilliard equation}
	\begin{align*}
	\left\{
	\begin{aligned}
	\delt\phi &= \laplace \mu &&\text{in}\;\Omega_T, \\
	\mu &= -\laplace \phi + F'(\phi) &&\text{in}\;\Omega_T
	\end{aligned}
	\right.
	\end{align*}
	with the homogeneous Neumann condition $\deln\mu = 0$ on $\Gamma_T$ and the dynamic boundary condition
	\begin{align*}
	\delt\phi = \kappa\laplace_\Gamma\phi - \deln\phi - G'(\phi) \quad\text{on}\;\Gamma_T
	\end{align*}
	is the gradient flow equation of the energy $E$ with respect to the inner product
	\begin{align*}
	\langle \phi,\psi \rangle := \int\limits_\Omega \grad\nmo\phi\cdot\grad\nmo\psi\dx
	+ \int\limits_\Gamma \phi\,\psi \dS.
	\end{align*}
	This problem is introduced in \cite{kenzler} and analyzed in \cite{colli-sprekels}. Formally speaking, the gradient flow is of type $H^{-1}$ in the bulk and of type $L^2$ on the surface. Note that the boundary condition $\deln\mu = 0$ leads to mass conservation in the bulk.
	\item Let us now consider the elliptic system
	\begin{align}
	\label{PDE:WENZ}
	\left\{
	\begin{aligned}
	\laplace u &= f_1 &&\text{in}\;\Omega, \\
	\sigma \laplace_\Gamma u - \deln u &= f_2 &&\text{on}\;\Gamma. \\
	\end{aligned}
	\right.
	\end{align}
	Using the Lax-Milgram theorem one can show that the system
	\eqref{PDE:WENZ} with $f=(f_1,f_2)$ has a unique weak solution
	$u=\mathcal S(f)$ with $\langle u\rangle_\Omega = 0$ if the right-hand side $f$
	satisfies the conditions
	$\langle f_1\rangle_\Omega +\langle f_2\rangle_\Gamma =
	0$. This means that we can define a solution operator
	$f\mapsto \mathcal S(f)$ that maps any admissible right-hand
	side $f$ onto its corresponding solution. \\[1ex]
	Then, the \textit{Cahn--Hilliard equation}
	\begin{align}
	\label{PDE:CHW}
	\left\{
	\begin{aligned}
	\delt\phi &= \laplace \mu &&\text{in}\;\Omega_T, \\
	\mu &= -\laplace \phi + F'(\phi) &&\text{in}\;\Omega_T
	\end{aligned}
	\right.
	\end{align}
	with the dynamic boundary condition
	\begin{align}
	\label{PDE:BCW}
	\left\{
	\begin{aligned}
	\delt\phi &= \sigma\laplace_\Gamma\mu - \deln\mu &&\text{on}\;\Gamma_T\\
	\mu 
	&= -\kappa \laplace_\Gamma\phi + \deln\phi + G'(\phi) &&\text{on}\;\Gamma_T
	\end{aligned}
	\right.
	\end{align}
	(for some parameter $\sigma\ge 0$) is the gradient flow equation of the energy $E$  with respect to the inner product
	\begin{align*}
	\langle \phi,\psi \rangle := \int\limits_\Omega \grad \mathcal S(\phi)\cdot\grad \mathcal S(\psi)\dx
	+ \sigma \int\limits_\Gamma \gradg \mathcal S(\phi)\cdot\gradg \mathcal S(\psi)\dS.
	\end{align*}
	This is the problem introduced in
	\cite{Gal1,goldstein} which reduces to the Wentzell boundary condition for
	$\sigma=0$ (see \cite{colli-fukao,Gal1,GalWu}).
	In this model, the total mass (i.e., the sum of bulk and surface mass) is conserved. 	Note that integrating and adding the second lines of \eqref{PDE:CHW} and \eqref{PDE:BCW} implies that 
	\begin{align*}
	\int\limits_\Omega \mu \dx + \int\limits_\Gamma \mu \dS = \int\limits_\Omega F'(\phi) \dx + \int\limits_\Gamma G'(\phi) \dS
	\end{align*}
	must hold. Therefore, $\mu$ cannot be equal to $S(\delt\phi)$ but instead $\mu = S(\delt\phi)+c$ where the constant is given by
	\begin{align*}
	c:= \frac{\int_\Omega F'(\phi)\dx + \int_\Gamma G'(\phi)\dS}{|\Omega|+|\Gamma|}.
	\end{align*}
	Then $\mu$ is still a solution of \eqref{PDE:WENZ} with $f_1=\delt\phi$ and $f_2=\delt\phi\vert_\Gamma$ and the inner product is not
	affected by this shift as only the gradients of the
	solution components are involved. 
\end{enumerate}
In the next section we will exploit the gradient flow structure \eqref{EQ:GFE} to construct a weak solution of the Cahn--Hilliard system \eqref{CHLW:EPS} by implicit time discretization. Certainly, weak solutions of the above systems (i)-(iv) could be constructed in a similar fashion.

%

\section{Existence of a weak solution}

As stated in the introduction, we consider the following Cahn--Hilliard system with a dynamic boundary condition with $\eps=1$: 
\begin{subequations}
	\label{CHLW}
	\begin{align}
	&\delt\phi = \laplace \mu \quad\twith \mu = -\laplace \phi + F'(\phi)
	&& \text{in} \;\; \Omega\times]0,T[, \\	
	&\del_\n \mu = 0 
	&& \text{on} \;\; \Gamma\times]0,T[, \\	
	&\delt\phi = \laplace_\Gamma \mu_\Gamma
	\twith \mu_\Gamma = -\kappa\laplace_\Gamma \phi + \del_\n \phi + G'(\phi)
	&& \text{on} \;\; \Gamma\times]0,T[, \\	
	&\phi(\cdot,0) = \phi_0
	&& \text{in} \;\; \Omega.
	\end{align}
\end{subequations}
The choice $\eps=1$ means no loss of generality as it does not play any role in the analysis.

\subsection{Weak solutions and the existence theorem}

Before formulating the existence theorem we give the definition of a weak solution of the Cahn--Hilliard equation \eqref{CHLW}. In the following, as we only consider weak solutions, it suffices to assume that $\Omega$ has merely a Lipschitz-boundary.

\begin{definition}
	\label{DEF:WS}
	Let $T\in]0,\infty[$, $\kappa\ge 0$ and let $\phi_0\in \Vm$ be any initial datum having finite energy, i.e., $E(\phi_0)<\infty$. Then the triple $(\phi,\mu,\mu_\Gamma)$ is called a weak solution of the system \eqref{CHLW} if the following holds:
	\begin{itemize}
		\itemi The occuring functions have the following regularity:
		\begin{alignat*}{3}
		&\phi \in C\big([0,T];L^2(\Omega)\big) 
		\cap L^\infty\big(0,T;H^1(\Omega)\big), 
		&&\mu \in L^2\big(0,T;H^1(\Omega)\big),\\
		&\phi\vert_\Gamma \in C\big([0,T];L^2(\Gamma)\big) \cap 
		\begin{cases}
		L^\infty\big(0,T;H^1(\Gamma)\big), &\hspace{-5pt}\kappa >0,\\
		L^\infty\big(0,T;H^{1/2}(\Gamma)\big) , &\hspace{-5pt}\kappa =0,
		\end{cases}\;\;
		&&\mu_\Gamma \in L^2\big(0,T;H^1(\Gamma)\big),\\
		&F'(\phi) \in L^1(\Omega_T), 
		&&G'(\phi) \in L^1(\Gamma_T).
		\end{alignat*}
		\itemii The following weak formulations are satisfied:
		\begin{align}
		\label{DEF:WF1}
		\int\limits_{\Omega_T} (\phi-\phi_0)\, \delt\zeta\dxt
		&= 	\int\limits_{\Omega_T} \grad\mu \cdot \grad\zeta \dxt
		\end{align}
		for all $\zeta\in L^2\big(0,T;H^1(\Omega)\big)$ with $\delt\zeta\in L^2(\Omega_T)$ and $\zeta(T)=0$,
		\begin{align}
		\label{DEF:WF2}
		\int\limits_{\Gamma_T} (\phi-\phi_0)\, \delt\xi\dxt
		&= 	\int\limits_{\Gamma_T} \gradg \mu_\Gamma \cdot \gradg \xi \dxt
		\end{align}
		for all $\xi\in L^2\big(0,T;H^1(\Gamma)\big)$ with $\delt\xi\in L^2(\Gamma_T)$ and $\xi(T)=0$ and 
		\begin{align}
		\label{DEF:WF3}
		\begin{aligned}
		\int\limits_{\Omega_T} \mu\,\eta\dx + \int\limits_{\Gamma_T} \mu_\Gamma\,\eta \dS\mathrm dt  
		&= \int\limits_{\Omega_T} \grad\phi\cdot\grad\eta \dxt 
		+ \int\limits_{\Omega_T} F'(\phi)\,\eta\dxt \notag\\
		&\quad +\int\limits_{\Gamma_T} \kappa \gradg \phi \cdot \gradg \eta \dS\mathrm dt
		+\int\limits_{\Gamma_T} G'(\phi) \eta \dS\mathrm dt
		\end{aligned}
		\end{align}
		for all $\eta\in L^2\big(0,T;\V\big) \cap L^\infty(\Omega_T)$ with $\eta\vert_\Gamma\in L^\infty(\Gamma)$.
		\itemiii The energy inequality is satisfied, i.e., for all $t\in[0,T]$,
		\begin{align}
		E\big(\phi(t)\big) 
		+ \frac 1 2 \int\limits_0^t \|\grad\mu(s)\|_{L^2(\Omega)}^2 + \|\gradg\mu_\Gamma(s)\|_{L^2(\Gamma)}^2 \ds
		\le E(\phi_0) < \infty.
		\end{align}
	\end{itemize}
\end{definition}

\begin{remark}
	\label{RMK:WS}
	Let us assume that $(\phi,\mu,\mu_\Gamma)$ is a weak solution in the sense of Definition \ref{DEF:WS}.
	\begin{enumerate}
		\item[\textnormal{(a)}] It follows from the weak formulations \eqref{DEF:WF1} and \eqref{DEF:WF2} that
		\begin{align}
		\label{ASS:REM1}
		\langle \phi(t) \rangle_\Omega &= \langle \phi_0 \rangle_\Omega = m_1,\\
		\label{ASS:REM2}
		\langle \phi(t) \rangle_\Gamma &= \langle \phi_0 \rangle_\Gamma = m_2
		\end{align}
		for all $t\in[0,T]$. To prove this, let $t_0\in]0,T]$ and $0<h<t_0/2$ be arbitrary and choose
		\begin{align*}
		\zeta(t) := \begin{cases}
		1, &t< t_0-h, \\
		 (t_0-t)/h, & t_0-h \le t \le t_0,\\
		0, & t > t_0.
		\end{cases}
		\end{align*}
		Then $\zeta$ is a suitable test function with $\grad\zeta = 0$ and $\delt\zeta = -\frac 1 h \chi_{[t_0-h,t_0]}$. Plugging $\zeta$ into \eqref{DEF:WF1} yields
		\begin{align*}
		\frac 1 h \int_{t_0-h}^{t_0} \int_{\Omega} \phi(t) \dx \dt = \int_{\Omega} \phi_0 \dx.
		\end{align*}
		Since $t\mapsto \int_\Omega \phi(t) \dx$ is continuous, \eqref{ASS:REM1} follows as $h$ tends to zero. The assertion \eqref{ASS:REM2} can be proved analogously.
		\item[\textnormal{(b)}] Since $\mu\in
		L^2\big(0,T;H^1(\Omega)\big)$ and $\mu_\Gamma\in
		L^2\big(0,T;H^1(\Gamma)\big)$, it also follows (by
		definition) from the weak formulations \eqref{DEF:WF1} and
		\eqref{DEF:WF2} that $\phi$ and $\phi\vert_\Gamma$ have
		generalized derivatives with respect to $t$ given by
		\begin{align*}
		\delt\phi = \laplace\mu \;\;\text{in}\;\; L^2\big(0,T;H^1(\Omega)^*\big) , \quad \delt\phi\vert_\Gamma = \laplace_\Gamma \mu_\Gamma \;\;\text{in}\;\; L^2\big(0,T;H^1(\Gamma)^*\big).
		\end{align*}
	\end{enumerate}
\end{remark}

The following theorem constitutes the main result of this paper: The existence of a weak solution of the Cahn--Hilliard system \eqref{CHLW}.

\begin{theorem}
	\label{MAIN}
	We assume that $\Omega\subset\R^d$ (with $d\in\{2,3\}$) is a bounded domain with Lipschitz boundary and that the potentials $F$ and $G$ satisfy the condition \textnormal{(P2)}. Let $T>0$ and $m=(m_1,m_2)\in \R^2$ be arbitrary and let $\phi_0 \in \Vm$ be any initial datum having finite energy, i.e., $E(\phi_0)<\infty$. Then there exists a weak solution $(\phi,\mu,\mu_\Gamma)$ of the initial value problem \eqref{CHLW} in the sense of Definition \ref{DEF:WS}. This solution has the following additional properties:
	\begin{align*}
	\phi &\in C^{0,1/4}\big([0,T];L^2(\Omega)\big), \\
	\phi\vert_\Gamma &\in 
	\begin{cases}
	C^{0,1/4}\big([0,T];L^2(\Gamma)\big), &\hspace{-5pt}\kappa >0,\\
	C^{0,1/4}\big([0,T];H^1(\Gamma)^*\big) , &\hspace{-5pt}\kappa =0.
	\end{cases}\\
	\end{align*}
\end{theorem}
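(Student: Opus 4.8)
The plan is to construct the weak solution by a minimizing-movement (implicit Euler) scheme built on the gradient flow structure \eqref{EQ:GFE}. Fix $N\in\N$, set $h:=T/N$ and $\phi^0:=\phi_0$, and for $n=0,\dots,N-1$ define $\phi^{n+1}$ as a minimizer of
\[
\mathcal F_n(\varphi):=E(\varphi)+\tfrac{1}{2h}\|\varphi-\phi^n\|_\Va^2
\]
over the affine constraint set $\Vm$. Minimizing over $\Vm$ keeps the generalized averages fixed at $m=(m_1,m_2)$, which is both the mass conservation predicted by Remark \ref{RMK:WS}(a) and what makes $\varphi-\phi^n\in\Vo$, so that $\|\varphi-\phi^n\|_\Va$ is well defined. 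Existence of a minimizer follows from the direct method: the potential part of $E$ is bounded below by (P2), the Dirichlet terms together with the fixed averages yield, via Poincar\'e's inequality, an $H^1(\Omega)$-bound (and an $H^1(\Gamma)$-bound when $\kappa>0$) along any minimizing sequence, and $\mathcal F_n$ is weakly lower semicontinuous because the Dirichlet and $\Va$-terms are convex, the convex parts $F_1,G_1$ are weakly lower semicontinuous, and the remainders $F_2,G_2$, having at most linearly growing derivatives by (P2.4), are continuous under the compact embedding $\V\hookrightarrow\hookrightarrow L^2$.

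Perturbing $\phi^{n+1}$ by $s\eta$ with $\eta\in\Vo\cap L^\infty(\Omega)$, $\eta\vert_\Gamma\in L^\infty(\Gamma)$ (the restriction to $L^\infty$ is needed to differentiate the potential terms using (P2.3)--(P2.4)), the first variation of $\mathcal F_n$ gives the discrete gradient flow identity
\[
\tfrac 1 h\langle\phi^{n+1}-\phi^n,\eta\rangle_\Va=-\tfrac{\delta E}{\delta\phi}(\phi^{n+1})[\eta].
\]
Via the representation of $\langle\cdot,\cdot\rangle_\Va$ from (P9) this defines chemical potentials $\mu^{n+1}\in H^1(\Omega)$, $\mu_\Gamma^{n+1}\in H^1(\Gamma)$, unique up to the additive constants fixed by the means $\tfrac{1}{|\Omega|}\int_\Omega F'(\phi^{n+1})\dx$ and $\tfrac{1}{|\Gamma|}\int_\Gamma G'(\phi^{n+1})\dS$ (controlled by the energy through (P2.3)), solving $\tfrac{\phi^{n+1}-\phi^n}{h}=\laplace\mu^{n+1}$ and $\tfrac{\phi^{n+1}-\phi^n}{h}=\laplace_\Gamma\mu_\Gamma^{n+1}$ weakly. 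Comparing $\mathcal F_n(\phi^{n+1})\le\mathcal F_n(\phi^n)$ yields the discrete dissipation inequality $E(\phi^{n+1})+\tfrac{1}{2h}\|\phi^{n+1}-\phi^n\|_\Va^2\le E(\phi^n)$, which telescopes to
\[
E(\phi^N)+\tfrac{1}{2h}\sum_{n=0}^{N-1}\|\phi^{n+1}-\phi^n\|_\Va^2\le E(\phi_0).
\]
With the piecewise-constant interpolant $\bar\phi^h$ and the piecewise-linear interpolant $\hat\phi^h$ (and likewise $\bar\mu^h,\bar\mu_\Gamma^h$), this gives $h$-uniform bounds for $\bar\phi^h$ in $L^\infty(0,T;\V)$, for $\delt\hat\phi^h$ in $L^2(0,T;\Va)$, and---using $\|\grad\mu^{n+1}\|_{L^2}^2+\|\gradg\mu_\Gamma^{n+1}\|_{L^2}^2=h^{-2}\|\phi^{n+1}-\phi^n\|_\Va^2$ together with the mean bounds---for $\bar\mu^h,\bar\mu_\Gamma^h$ in $L^2(0,T;H^1)$.

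Letting $h\to0$, weak(-$*$) compactness extracts $\phi,\mu,\mu_\Gamma$ as limits of $\bar\phi^h,\bar\mu^h,\bar\mu_\Gamma^h$, with $\hat\phi^h-\bar\phi^h\to0$ in $L^2(0,T;\Va)$ so that both interpolants share the limit $\phi$. The Aubin--Lions--Simon lemma, using $\V\hookrightarrow\hookrightarrow L^2(\Omega)\hookrightarrow\Va$ (and the analogous chain on $\Gamma$), upgrades this to strong convergence $\bar\phi^h\to\phi$ in $L^2(\Omega_T)$ and, along a subsequence, almost everywhere; on the boundary the trace theorem supplies the corresponding convergence of $\bar\phi^h\vert_\Gamma$ (this is where $\kappa=0$ is more delicate, as only $H^{1/2}(\Gamma)$ is available). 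One then passes to the limit in the discrete versions of \eqref{DEF:WF1}--\eqref{DEF:WF3}; the linear terms converge by weak convergence and the energy inequality follows by weak lower semicontinuity. \emph{The main obstacle is identifying the nonlinear terms}, i.e.\ showing $F'(\bar\phi^h)\wto F'(\phi)$ in $L^1(\Omega_T)$ and $G'(\bar\phi^h)\wto G'(\phi)$ in $L^1(\Gamma_T)$. This is exactly what the splitting in (P2) is designed for: the remainders $F_2',G_2'$ converge strongly in $L^2$ by their linear growth (P2.4) and the strong convergence of $\bar\phi^h$, while the convex parts $F_1',G_1'$ are handled through the a.e.\ convergence, Fatou's lemma, and the equi-integrability furnished by (P2.3).

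Finally, the Hölder regularity follows by interpolation. The uniform bound on $\delt\hat\phi^h$ in $L^2(0,T;\Va)$ gives $\phi\in H^1(0,T;\Va)\hookrightarrow C^{0,1/2}([0,T];\Va)$, while $\phi\in L^\infty(0,T;\V)$ controls the $H^1$-norm uniformly in time. The interpolation inequality $\|v\|_{L^2(\Omega)}^2\le C\,\|v\|_{H^1(\Omega)}\,\|v\|_\Va$ (and its boundary analogue, available only for $\kappa>0$, where $H^1(\Gamma)$-control holds) combined with the $\tfrac12$-Hölder continuity in $\Va$ then yields $\phi\in C^{0,1/4}([0,T];L^2(\Omega))$ and, for $\kappa>0$, $\phi\vert_\Gamma\in C^{0,1/4}([0,T];L^2(\Gamma))$; for $\kappa=0$ only the surface dual-norm control survives, so the boundary statement degrades to $C^{0,1/4}([0,T];H^1(\Gamma)^*)$.
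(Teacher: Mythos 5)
Your scheme is the same as the paper's: the implicit minimizing-movement discretization of $J_n(\phi)=\frac{1}{2\tau}\|\phi-\phi^n\|_\Va^2+E(\phi)$ on $\Vm$, the discrete dissipation inequality, piecewise constant and linear interpolants, weak/strong compactness, a Vitali-type argument for $F'$ and $G'$, and the $H^1$--$\Va$ interpolation that produces the $C^{0,1/4}$ regularity (with the $H^1(\Gamma)^*$ fallback for $\kappa=0$). Most steps are sketched correctly, but there is a genuine gap at the single most delicate point: passing from the Euler--Lagrange equation, which a priori holds only for test functions in $\Vo$ (both generalized averages zero), to chemical potentials satisfying \eqref{EQ:WSD3} for \emph{all} $\eta\in\V\cap L^\infty(\Omega)$ with $\eta\vert_\Gamma\in L^\infty(\Gamma)$, together with bounds on their means.

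You assert that the free additive constants are the means $\frac{1}{|\Omega|}\int_\Omega F'(\phi^{n+1})\dx$ and $\frac{1}{|\Gamma|}\int_\Gamma G'(\phi^{n+1})\dS$, hence controlled by (P2.3). This identification is wrong: formally, $\langle\mu^{n+1}\rangle_\Omega=\frac{1}{|\Omega|}\big(\int_\Omega F'(\phi^{n+1})\dx-\int_\Gamma\deln\phi^{n+1}\dS\big)$ and $\langle\mu_\Gamma^{n+1}\rangle_\Gamma=\frac{1}{|\Gamma|}\big(\int_\Gamma G'(\phi^{n+1})\dS+\int_\Gamma\deln\phi^{n+1}\dS\big)$, so the flux contributions cancel only in the \emph{sum} of the two relations, not individually. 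Since the minimizer is merely in $\V$ — there is no $H^2$ estimate, and $\Gamma$ is only Lipschitz — the term $\int_\Gamma\deln\phi^{n+1}\dS$ is not controlled by (and not even defined from) the energy. Consequently, with your constants the weak formulation \eqref{DEF:WF3} would fail in the limit (test with an interior bump of nonzero mean), and the "mean bounds" you invoke to upgrade the gradient estimates to \eqref{BND:MUN} and \eqref{BND:MUNG} are unjustified. This is precisely why the paper's proof contains the generalized Poincar\'e inequality (Lemma \ref{LEM:GPI}): the bulk constant is fixed by requiring \eqref{EQ:WSD3} to hold for one fixed nonnegative $\eta_2\in C_c^\infty(\Omega)$, which gives $\big|\int_\Omega\mu^{n+1}\eta_2\dx\big|\le C(\eta_2)$ with every term energy-controlled via (P2.3) — no boundary term appears because $\eta_2$ has compact support; Lemma \ref{LEM:GPI} applied to $\mathcal M_{\eta_2}=\big\{v\in H^1(\Omega):\big|\int_\Omega v\,\eta_2\dx\big|\le C(\eta_2)\big\}$ then yields $\|\mu^{n+1}\|_{L^2(\Omega)}\le C\big(1+\|\grad\mu^{n+1}\|_{L^2(\Omega)}\big)$. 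The surface constant is fixed afterwards by testing with $\eta\equiv1$, which expresses $\int_\Gamma\mu_\Gamma^{n+1}\dS$ through $\int_\Omega F'(\phi^{n+1})\dx$, $\int_\Gamma G'(\phi^{n+1})\dS$ and $\int_\Omega\mu^{n+1}\dx$, all of which are by then bounded. Without this (or an equivalent) argument your proof does not close.

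A secondary, fixable flaw: the Aubin--Lions chain $\V\hookrightarrow\hookrightarrow L^2(\Omega)\hookrightarrow\Va$ is not consistent, because viewing an $L^2(\Omega)$ function as an element of $\Va$ compatibly with your bound on $\delt\hat\phi^h$ requires pairing also against boundary values, i.e., a trace. The paper instead splits bulk and surface: the dissipation bound gives $\delt\bar\phi_N$ bounded in $L^2\big(0,T;H^1(\Omega)^*\big)$ and $\delt\bar\phi_N\vert_\Gamma$ bounded in $L^2\big(0,T;H^1(\Gamma)^*\big)$, compactness is applied to each chain separately (via Arzel\`a--Ascoli with the H\"older estimates, respectively Aubin--Lions), and the boundary limit is identified with $\phi\vert_\Gamma$ through convergence in $C\big([0,T];H^s(\Omega)\big)$ for $s>\frac12$ and continuity of the trace.
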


\subsection{Implicit time discretization}
To prove Theorem \ref{MAIN}, some preparation is
necessary. The first step is to derive an implicit time discretization of the system \eqref{CHLW}. After that, we intend to show that the corresponding time-discrete solution converges to a weak solution of \eqref{CHLW} in some suitable sense. This is a common approach in dealing with gradient flow equations that has already been used extensively in the literature (see, e.g., \cite{ambrosio}).\\[1ex]
To this end, let $N\in\N$ be arbitrary and let $\tau:=T/N$
denote the time step size. Without loss of generality, we assume that $\tau\le 1$. Now, we define $\phi^n$, $ n=0,...,N$ recursively by the following construction: The $0$-th iterate is the initial datum, i.e., $\phi^0:=\phi_0$. If the $n$-th iterate $\phi^n$ is already constructed, we choose $\phi^{n+1}$ to be a minimizer of the functional
\begin{align*}
J_n(\phi):=\frac{1}{2\tau}\|\phi-\phi^n\|_\Va^2 + E(\phi)
\end{align*}
on the set $\Vm$.  Note that $J_n$ may attain the value $+\infty$.  The existence of such a minimizer is guaranteed by Lemma \ref{LEM:EXM} (that will be established in the next subsection). The idea behind this definition becomes clear when considering the first variation of the functional $J_n$ at the point $\phi^{n+1}$. As $\phi^{n+1}$ is a minimizer  and since $F_1$ and $G_1$ are convex, we can proceed as in \cite[Lem.\,3.2]{garckeelas} to conclude that 
\begin{align*}
0 &= \Big\langle \frac{\phi^{n+1}-\phi^n}{\tau} , \eta \Big\rangle_\Va
+\int\limits_\Omega \grad\phi^{n+1}\cdot\grad\eta \dx + \int\limits_\Omega F'(\phi^{n+1})\,\eta\dx \\
&\qquad +\int\limits_\Gamma \kappa \gradg \phi^{n+1} \cdot \gradg \eta \dSx
+\int\limits_\Gamma G'(\phi^{n+1}) \eta \dSx \notag	
\end{align*}
 for all directions $\eta \in \Vo \cap L^\infty(\Omega)$ with $\eta\vert_\Gamma\in L^\infty(\Gamma)$. This can be interpreted as an implicit time discretization of the corresponding gradient flow equation \eqref{EQ:GFE}.  We set
\begin{align*}
\mathring\mu^{n+1} := \nmo\left(-\frac{\phi^{n+1}-\phi^n}{\tau}\right) 
\tand 
\mathring\mu_\Gamma^{n+1} := \nmg\left(-\frac{\phi^{n+1}\vert_\Gamma-\phi^n\vert_\Gamma}{\tau}\right).
\end{align*}
According to (P7), this means that $\mathring\mu^{n+1} \in H^1(\Omega)$ and $\mathring\mu_\Gamma^{n+1} \in H^1(\Gamma)$ are a solution of the Poisson equations
\begin{align}
\label{PDE:MUO}
-\laplace\mu &= -\frac{\phi^{n+1}-\phi^n}{\tau} \quad \text{in}\; \Omega 
\twith \del_\n \mu = 0 \quad \text{on}\; \Gamma, \\
\label{PDE:MUG}
-\laplace_\Gamma\mu_\Gamma &= -\frac{\phi^{n+1}-\phi^n}{\tau} \quad \text{on}\; \Gamma
\end{align} 
with $\langle\mathring\mu^{n+1} \rangle_\Omega = 0$ and $\langle\mathring\mu^{n+1} \rangle_\Gamma = 0$.
Hence, the functions $\phi^{n+1},\;\phi^n,\;\mathring\mu^{n+1}$ and $\mathring\mu_\Gamma^{n+1}$ satisfy the equation
\begin{align}
\label{EQ:WFV0}
\begin{aligned}
\int\limits_\Omega \mathring\mu^{n+1}\,\eta\dx + \int\limits_\Gamma \mathring\mu_\Gamma^{n+1}\,\eta \dSx
&= \int\limits_\Omega \grad\phi^{n+1}\scdot\grad\eta + F'(\phi^{n+1})\,\eta\dx \\ 
&\quad +\int\limits_\Gamma \kappa \gradg \phi^{n+1} \scdot \gradg \eta + G'(\phi^{n+1}) \eta \dSx  
\end{aligned}
\end{align}
for all functions  $\eta \in \Vo \cap L^\infty(\Omega)$ with $\eta\vert_\Gamma\in L^\infty(\Gamma)$  and all $n\in\{0,...,N-1\}$. However, to obtain an approximate solution of the system \eqref{CHLW}, we need equation \eqref{EQ:WFV0} to hold for all test functions  $\eta \in \V \cap L^\infty(\Omega)$ with $\eta\vert_\Gamma\in L^\infty(\Gamma)$.  The idea is to replace $\mathring\mu^{n+1}$ and $\mathring\mu_\Gamma^{n+1}$ by $$\mu^{n+1}:=\mathring\mu^{n+1} + c^{n+1} \tand \mu_\Gamma^{n+1}:=\mathring\mu_\Gamma^{n+1} + c_\Gamma^{n+1},$$ 
with constants $c^{n+1},c_\Gamma^{n+1}\in\R$ that do not depend on $\eta$, since these functions $\mu^{n+1}$ and $\mu_\Gamma^{n+1}$ are still a solution of  \eqref{PDE:MUO} and \eqref{PDE:MUG} . We will now show that \eqref{EQ:WFV0} holds true for all  $\eta \in \V \cap L^\infty(\Omega)$ with $\eta\vert_\Gamma\in L^\infty(\Gamma)$  if the functions $\mathring\mu^{n+1}$ and $\mathring\mu_\Gamma^{n+1}$ are replaced by $\mu^{n+1}$ and $\mu_\Gamma^{n+1}$ with suitably chosen constants $c^{n+1}$ and $c_\Gamma^{n+1}$. Therefore, let  $\eta \in \V \cap L^\infty(\Omega)$ with $\eta\vert_\Gamma\in L^\infty(\Gamma)$  be arbitrary. We define a new test function $\eta_0$  by
\begin{align*}
\eta_0 := \eta - c_1\eta_1 - c_2\eta_2
\end{align*}
where $c_1,c_2\in \R$, $\eta_1\equiv 1$ and $\eta_2\in C^\infty_c(\Omega)$ is an arbitrary nonnegative function that is not identically zero. Of course, this means that  $\eta_0 \in \V \cap L^\infty(\Omega)$ with $\eta\vert_\Gamma\in L^\infty(\Gamma)$.  Choosing 
\begin{align*}
c_1:=\frac{1}{|\Gamma|}\int\limits_\Gamma \eta \dSx \tand c_2 := \left(\int\limits_\Omega\eta \dx - \frac{|\Omega|}{|\Gamma|} \int\limits_\Gamma \eta \dSx\right) \Bigg/ \left(\int\limits_\Omega \eta_2 \dx\right)
\end{align*}
we obtain that
\begin{gather*}
\int\limits_\Gamma \eta_0 \dSx = \int\limits_\Gamma \eta \dSx - c_1 |\Gamma| = 0, \\
\int\limits_\Omega \eta_0 \dx 
= \int\limits_\Omega \eta \dx - c_1 |\Omega| -c_2 \int\limits_\Omega \eta_2 \dx 
= 0.
\end{gather*}
Thus, $\eta_0$ even lies in  $\Vo \cap L^\infty(\Omega)$ with $\eta\vert_\Gamma\in L^\infty(\Gamma)$  and may thus be plugged into equation \eqref{EQ:WFV0}. This yields
\begin{align*}
\int\limits_\Omega \grad\phi^{n+1}\cdot\grad(\eta -c_2\eta_2) 
+  F'(\phi^{n+1})\,(\eta-c_1-c_2\eta_2)\dx 
+\int\limits_\Gamma \kappa \gradg \phi^{n+1} \cdot \gradg \eta \dSx \\
+\int\limits_\Gamma G'(\phi^{n+1}) (\eta-c_1) \dSx 
= \int\limits_\Omega \mathring\mu^{n+1}\,(\eta-c_2\eta)\dx + \int\limits_\Gamma \mathring\mu_\Gamma^{n+1}\,\eta \dSx
\end{align*}
which is equivalent to
\begin{align*}
&\int\limits_\Omega \grad\phi^{n+1}\cdot\grad\eta + F'(\phi^{n+1})\,\eta\dx 
+\int\limits_\Gamma \kappa \gradg \phi^{n+1} \cdot \gradg \eta + G'(\phi^{n+1}) \eta \dSx \notag\\
&= \int\limits_\Omega \mathring\mu^{n+1}\,\eta\dx + \int\limits_\Gamma \mathring\mu_\Gamma^{n+1}\,\eta \dSx 
+c_1 \int\limits_\Omega F'(\phi^{n+1})\dx +c_1 \int\limits_\Gamma G'(\phi^{n+1}) \dSx \\
&\quad- c_2 \int\limits_\Omega \mathring\mu^{n+1}\,\eta_2\dx + c_2 \int\limits_\Omega \grad \phi^{n+1}\cdot \grad \eta_2 \dx
+ c_2\int\limits_\Omega F'(\phi^{n+1}) \eta_2 \dx.
\end{align*}
By the definition of $c_1$ and $c_2$ we obtain that
\begin{align*}
\begin{aligned}
&\int\limits_\Omega \grad\phi^{n+1}\cdot\grad\eta + F'(\phi^{n+1})\,\eta\dx 
+\int\limits_\Gamma \kappa \gradg \phi^{n+1} \cdot \gradg \eta + G'(\phi^{n+1}) \eta \dSx \notag\\
&\qquad = \int\limits_\Omega (\mathring\mu^{n+1} + c^{n+1})\,\eta\dx + \int\limits_\Gamma (\mathring\mu_\Gamma^{n+1} + c_\Gamma^{n+1})\,\eta \dSx 
\end{aligned}
\end{align*}
where the constants $c^{n+1}$ and $c_\Gamma^{n+1}$ are defined by
\begin{align*}
c^{n+1}&:= \left(\int\limits_\Omega -\mathring\mu^{n+1}\eta_2 + \grad\phi^{n+1}\cdot\grad\eta_2 + F'(\phi^{n+1})\eta_2 \dx \right)\Bigg/ \left(\int\limits_\Omega \eta_2 \dx \right),\\
c_\Gamma^{n+1}&:= \left(|\Omega|\int\limits_\Omega -\mathring\mu^{n+1}\eta_2 + \grad\phi^{n+1}\cdot\grad\eta_2 + F'(\phi^{n+1})\eta_2 \dx \right)\Bigg/ \left(|\Gamma|\int\limits_\Omega \eta_2 \dx \right)\\
&\qquad + \frac{1}{|\Gamma|} \left( \int\limits_\Omega F'(\phi^{n+1}) \dx + \int\limits_\Gamma G'(\phi^{n+1})\dSx  \right).
\end{align*}
Consequently the functions $\phi^{n+1},\;\phi^n,\;\mu^{n+1}$ and $\mu_\Gamma^{n+1}$ are a solution of
\begin{subequations}
	\begin{align}
	\label{EQ:WSD1}
	&\langle \grad\mu^{n+1},\grad\zeta \rangle_{L^2(\Omega)}  
	= -\Big\langle \frac{\phi^{n+1}-\phi^n}{\tau}, \zeta \Big\rangle_{L^2(\Omega)}
	\quad\text{for all}\; \zeta\in H^1(\Omega), 
	\\
	\label{EQ:WSD2}
	&\langle \gradg\mu_{\Gamma}^{n+1},\gradg\xi \rangle_{L^2(\Gamma)}  
	= -\Big\langle \frac{\phi^{n+1}-\phi^n}{\tau}, \xi \Big\rangle_{L^2(\Gamma)}
	\quad\text{for all}\; \xi\in H^1(\Gamma), 
	\\[2mm]
	\label{EQ:WSD3}
	&\begin{aligned}
	&\int\limits_\Omega\hspace{-3pt} \grad\phi^{n+1}\scdot\grad\eta + F'(\phi^{n+1})\,\eta\dx 
	+ \int\limits_\Gamma \hspace{-3pt} \kappa \gradg \phi^{n+1} \scdot \gradg \eta +  G'(\phi^{n+1}) \eta \,\mathrm dS(x) \\
	&\; = \int\limits_\Omega \hspace{-3pt} \mu^{n+1}\,\eta\dx 
	+ \int\limits_\Gamma \mu_\Gamma^{n+1}\,\eta \dSx  	\quad\text{for all}\;  \eta\in \V \cap L^\infty(\Omega), \eta\vert_\Gamma \in L^\infty(\Gamma)),
	\end{aligned}
	\end{align}
\end{subequations}
which is an implicit time discretization of the system \eqref{CHLW}. This means that the triple $(\phi^n,\mu^n,\mu_\Gamma^n)$, $n=1,...,N$ describes a time-discrete approximate solution. \\[1ex]
In the following, $(\phi_N,\mu_N,\mu_{\Gamma,N})$ will denote the \textit{piecewise constant extension} of the approximate solution $(\phi^n,\mu^n,\mu_\Gamma^n)_{n=1,...,N}$ on the interval $[0,T]$, i.e., for $n\in\{1,...,N\}$ and $t\in](n-1)\tau,n\tau]$, we set\vspace{-2mm}
\begin{align}
(\phi_N,\mu_N,\mu_{\Gamma,N})(\cdot,t):=(\phi_N^n,\mu_N^n,\mu_{\Gamma,N}^n):=(\phi^n,\mu^n,\mu_\Gamma^n).
\end{align}
Similarly, we define the \textit{piecewise linear extension} $(\bar\phi_N,\bar\mu_N,\bar\mu_{\Gamma,N})$ by
\begin{align}
(\bar\phi_N,\bar\mu_N,\bar\mu_{\Gamma,N})(\cdot,t)
:=\alpha (\phi^n_N,\mu^n_N,\mu_{\Gamma,N}^n) + (1-\alpha) (\phi^{n-1}_N,\mu^{n-1}_N,\mu_{\Gamma,N}^{n-1})
\end{align}
for $n\in\{1,...,N\}$, $\alpha\in[0,1]$ and $t=\alpha n \tau + (1-\alpha)(n-1)\tau$.

\subsection{The functional $J_n$ has a minimizer on $\Vm$}
We now show existence of a time discrete solution using methods from
calculus of variations.

\begin{lemma}
	\label{LEM:EXM}
	Let $N\in\N$ and $\tau>0$ as defined in Section 4.2 and let $n\in\{1,...,N\}$ be arbitrary. Then the functional
	\begin{align*}
	J_n(\phi):=\frac{1}{2\tau}\|\phi-\phi^n\|_\Va^2 + E(\phi),\quad \phi\in \Vm
	\end{align*}
	has a global minimizer $\bar\phi\in\Vm$,
	i.e., 
	\begin{align*}
	J_n(\bar\phi)\le J_n(\phi) , \quad \mbox{ for all } \phi\in\Vm.
	\end{align*} 
\end{lemma}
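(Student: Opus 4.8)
The plan is to apply the direct method of the calculus of variations. First I would record that $J_n$ is proper and bounded below on $\Vm$: by assumption (P2) the potentials obey $F\ge C_F$ and $G\ge C_G$, while the Dirichlet energies $\tfrac12\|\grad\phi\|_{L^2(\Omega)}^2$, $\tfrac\kappa2\|\gradg\phi\|_{L^2(\Gamma)}^2$ and the term $\tfrac1{2\tau}\|\phi-\phi^n\|_\Va^2$ are nonnegative. Hence $J_n(\phi)\ge C_F|\Omega|+C_G|\Gamma|>-\infty$ for every $\phi\in\Vm$, so $I:=\inf_{\Vm}J_n$ is finite provided $J_n\not\equiv+\infty$; the latter holds because $J_n(\phi^n)=E(\phi^n)<\infty$, the finiteness of the energy of the iterate $\phi^n$ being guaranteed inductively by the construction (each iterate decreases $E$). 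Note that $\phi-\phi^n$ has vanishing generalized averages over $\Omega$ and $\Gamma$ whenever $\phi\in\Vm$, so $\phi-\phi^n\in\Vo$ and $\|\phi-\phi^n\|_\Va$ is well defined.

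Next I would fix a minimizing sequence $(\phi_k)\subset\Vm$ with $J_n(\phi_k)\to I$ and extract uniform bounds. Since every summand of $J_n$ is bounded below, a uniform bound on $J_n(\phi_k)$ controls each of them separately; in particular $\|\grad\phi_k\|_{L^2(\Omega)}$ and, for $\kappa>0$, $\|\gradg\phi_k\|_{L^2(\Gamma)}$ stay bounded. Because the averages are pinned, $\langle\phi_k\rangle_\Omega=m_1$ and $\langle\phi_k\rangle_\Gamma=m_2$, the Poincar\'e--Wirtinger inequality on $\Omega$ (and, for $\kappa>0$, on $\Gamma$) upgrades these gradient bounds to a uniform bound on $\|\phi_k\|_\V$; for $\kappa=0$ one only needs the $H^1(\Omega)$ bound since $\V=H^1(\Omega)$. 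As $\V$ is a Hilbert space, a subsequence satisfies $\phi_k\wto\bar\phi$ in $\V$. The two average constraints are continuous linear functionals, hence weakly continuous, so $\bar\phi\in\Vm$ (equivalently, $\Vm$ is a closed affine subspace and thus weakly closed).

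It remains to prove $J_n(\bar\phi)\le\liminf_k J_n(\phi_k)=I$, treating the three contributions separately. The Dirichlet energies are weakly lower semicontinuous because $\grad\phi_k\wto\grad\bar\phi$ in $L^2(\Omega)$ (and $\gradg\phi_k\wto\gradg\bar\phi$ in $L^2(\Gamma)$ for $\kappa>0$) and the $L^2$-norm is weakly lower semicontinuous. For the potential terms I would invoke the compact embedding $\V\hookrightarrow\hookrightarrow L^2(\Omega)$ (Rellich) together with the compactness of the trace $\V\to L^2(\Gamma)$, which yield, along a further subsequence, $\phi_k\to\bar\phi$ a.e.\ in $\Omega$ and a.e.\ on $\Gamma$; since $F-C_F\ge0$ and $G-C_G\ge0$ are continuous, Fatou's lemma gives $\int_\Omega F(\bar\phi)\le\liminf_k\int_\Omega F(\phi_k)$ and likewise for $G$. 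Summing the three lower-semicontinuity estimates (and using that a convergent term may be pulled out of a $\liminf$) yields $J_n(\bar\phi)\le I$, and since $\bar\phi\in\Vm$ forces $J_n(\bar\phi)\ge I$, the function $\bar\phi$ is the desired global minimizer.

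The step I expect to be most delicate is the non-standard term $\tfrac1{2\tau}\|\phi-\phi^n\|_\Va^2$, since $\Va$ carries the (exotic) dual norm built from $\nmo$ and $\nmg$ and $\Vo$ is not complete for it. Rather than arguing weak lower semicontinuity in this incomplete space, I would show that $\|\cdot\|_\Va$ is in fact continuous with respect to strong $L^2(\Omega)\times L^2(\Gamma)$ convergence on $\Vo$: testing the defining elliptic problems for $\nmo\psi$ and $\nmg\psi$ against the solutions themselves and using the Poincar\'e inequality for mean-free functions gives $\|\psi\|_\Va\le C\big(\|\psi\|_{L^2(\Omega)}+\|\psi\|_{L^2(\Gamma)}\big)$ for all $\psi\in\Vo$. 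Combined with the strong $L^2$-convergences coming from the compact embeddings above, this forces $\|\phi_k-\phi^n\|_\Va\to\|\bar\phi-\phi^n\|_\Va$, so this term in fact converges rather than merely being lower semicontinuous, which is more than enough to close the argument.
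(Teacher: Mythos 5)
Your proof is correct, and while its skeleton (direct method: coercivity via Poincar\'e with the pinned averages, weak compactness in $\V$, weak closedness of $\Vm$, term-by-term lower semicontinuity) matches the paper's, two key technical steps are handled by genuinely different means. For the potential terms the paper exploits the splitting $F=F_1+F_2$, $G=G_1+G_2$ from (P2): the convex parts are treated by weak lower semicontinuity, while for $F_2,G_2$ it uses the quadratic growth bound from (P2.4) together with a.e.\ convergence and Lebesgue's generalized convergence theorem, which gives actual \emph{convergence} of those integrals. You instead apply Fatou's lemma directly to $F-C_F\ge 0$ and $G-C_G\ge 0$; this is more elementary, needs only continuity, the lower bound and a.e.\ convergence (no growth condition and no splitting at all), and yields the lower semicontinuity that is all the minimization requires --- the paper's stronger convergence statement is not needed here, though the same splitting argument is recycled later for the energy inequality in the proof of Theorem \ref{MAIN}, which is presumably why the paper phrases it that way. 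For the term $\tfrac1{2\tau}\|\phi-\phi^n\|_\Va^2$ the paper simply counts it among the ``convex terms'' that are weakly lower semicontinuous; you instead prove the elliptic estimate $\|\psi\|_\Va\le C\bigl(\|\psi\|_{L^2(\Omega)}+\|\psi\|_{L^2(\Gamma)}\bigr)$ on $\Vo$ and deduce from Rellich and the compact trace embedding that this term genuinely converges along the subsequence. This is more explicit and arguably more careful: weak lower semicontinuity of a convex functional rests on its strong closedness, which is exactly what your estimate supplies, whereas the paper leaves it implicit. Your remark on properness (that $J_n(\phi^n)=E(\phi^n)<\infty$ inductively, so the infimum is finite) also fills a point the paper passes over silently.
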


\begin{proof}
	We can prove the existence of a minimizer by the direct method of calculus of variations. 
	Obviously, $J$ is bounded from below by
	\begin{align*}
	J_n(\phi) \ge \int\limits_\Omega F(\phi) \dx + \int\limits_\Gamma G(\phi) \dS 
	\ge |\Omega|C_F + |\Gamma|C_G, \qquad\text{for all}\; \phi\in\Vm.
	\end{align*}
	Thus, the infimum $M:= \inf_\Vm J_n$ exists and therefore we can find a minimizing sequence $(\phi_k)_{k\in\N}\subset \Vm$ with
	\begin{align*}
	J_n(\phi_k)\to M,\quad k\to\infty \quad\tand\quad J_n(\phi_k) \le M+1,\quad k\in\N.
	\end{align*}
	From the definition of $J$, we conclude that
	\begin{align*}
	\frac 1 2 \|\grad\phi_k\|_{L^2(\Omega)}^2 
	+ \frac \kappa 2 \|\grad_\Gamma \phi_k\|_{L^2(\Gamma)}^2 
	&\le J_n(\phi_k) - \int\limits_\Omega F(\phi_k) \dx - \int\limits_\Gamma G(\phi_k) \dS \\
	&\le M + 1 - C_F|\Omega| - C_G|\Omega|
	\end{align*}
	for all $k\in\N$. Since $\langle\phi_k\rangle_\Omega = m_1$ and $\langle\phi_k\rangle_\Gamma = m_2$ for all $k\in\N$, we can use Poincar\'e's inequality to infer that $(\phi_k)_{k\in\N}$ is bounded in the Hilbert space $\V$. Hence, the Banach-Alaoglu theorem implies that there exists some function $\bar\phi \in\V$ such that $\phi_k \wto \bar\phi$ in $\V$ after extraction of a subsequence. Then
	\begin{align*}
	\big| \langle\bar\phi\rangle_\Omega - m_1 \big| = \big| \langle\bar\phi\rangle_\Omega - \langle \phi_k\rangle_\Omega \big| = \frac{1}{|\Omega|}\Bigg|\int\limits_\Omega \bar\phi - \phi_k \dx\Bigg| \to 0,\quad k\to\infty
	\end{align*} 
	since $\phi_k \wto \bar\phi$ especially in $L^2(\Omega)$. This means that $\langle\bar\phi\rangle_\Omega = m_1$. The equality $\langle\bar\phi\rangle_\Gamma = m_2$ can be proved analogously due to the compact embedding $H^1(\Omega)\hookrightarrow L^2(\Gamma)$. As $H^1(\Omega)$ is compactly embedded in $L^2(\Omega)$ we obtain, after another subsequence extraction, that 
	\begin{align*}
	\phi_k \to \bar\phi\quad\text{in}\;L^2(\Omega) \quad\tand\quad \phi_k \to \bar\phi\quad\text{almost everywhere in}\;\Omega
	\end{align*}
	as $k\to\infty$. From the compact embedding $H^1(\Omega)\hookrightarrow L^2(\Gamma)$ we conclude that
	\begin{align*}
	\phi_k \to \bar\phi\quad\text{in}\;L^2(\Gamma) \quad\tand\quad \phi_k \to \bar\phi\quad\text{almost everywhere in}\;\Gamma
	\end{align*}
	up to a subsequence. Obviously, all terms in the functional $J$ are convex in $\phi$ besides $\int_\Omega F_2(\phi) \mathrm dx$ and $\int_\Gamma G_2(\phi) \mathrm dS$. We know that $F_2(\phi_k)\to F_2(\bar\phi)$ almost everywhere in $\Omega$ and $G_2(\phi_k)\to G_2(\bar\phi)$ almost everywhere on $\Gamma$. Assumption (P2.4) implies that
	\begin{align*}
	|F_2(\phi_k)| \le C \big(1+|\phi_k|^2\big) \quad\text{in}\;\Omega
	\tand
	|G_2(\phi_k)| \le C \big(1+|\phi_k|^2\big) \quad\text{on}\;\Gamma
	\end{align*}
	for some constant $C\ge 0$ depending only on $B_F$ and $B_G$. Hence Lebesgue's general convergence theorem (cf. \cite[p.\,60]{alt}) implies that
	\begin{align*}
	\int\limits_\Omega F_2\big(\phi_k(x)\big) \dx 
	\to \int\limits_\Omega F_2\big(\bar\phi(x)\big) \dx,
	\quad
	\int\limits_\Gamma G_2\big(\phi_k(x)\big) \dSx 
	\to \int\limits_\Gamma G_2\big(\bar\phi(x)\big) \dSx
	\end{align*}
	as $k\to\infty$. Then, as the convex terms of $J$ are weakly lower semicontinuous, we obtain that
	\begin{align*}
	J_n(\bar\phi) \le \underset{k\to\infty}{\lim\inf} J_n(\phi_k) = M
	\end{align*}
	which directly yields $J_n(\bar\phi)=M$ by the definition of $M$.
\end{proof}

\subsection{Uniform bounds on the piecewise constant extension}
In this section the gradient flow structure will allow us to establish
uniform a priori estimates.

\begin{lemma}
	\label{LEM:BND}
	There exist nonnegative constants $C_1,...,C_5$ that do not depend on $N$, $n$ or $\tau$ such that 
	\begin{align}
	\label{BND:PHIN}
	&\|\phi_N\|_{L^\infty(0,T;H^1(\Omega))} \le C_1,\\
	\label{BND:PHING}
	&\begin{cases} 
	\|\phi_N\|_{L^\infty(0,T;H^{1/2}(\Gamma))} \le C_2 &\text{if}\; \kappa=0, \\
	\|\phi_N\|_{L^\infty(0,T;H^1(\Gamma))} \le C_3 &\text{if}\; \kappa>0, 
	\end{cases}\\[2mm] 
	\label{BND:MUN}
	&\|\mu_N\|_{L^2(0,T;H^1(\Omega))} \le C_4,\\
	\label{BND:MUNG}
	&\|\mu_{\Gamma,N}\|_{L^2(0,T;H^1(\Gamma))} \le C_5
	\end{align}
	for all $N\in\N$.
\end{lemma}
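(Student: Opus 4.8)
The plan is to derive every bound from a single discrete energy--dissipation inequality produced by the variational scheme, and then to upgrade the resulting gradient control on the chemical potentials to full $H^1$-bounds by separately estimating their means.

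First I would use the minimizing property from Lemma \ref{LEM:EXM}. Since $\phi^{n+1}$ minimizes $J_n$ over $\Vm$ and $\phi^n\in\Vm$ is an admissible competitor, inserting $\phi=\phi^n$ gives $J_n(\phi^{n+1})\le J_n(\phi^n)=E(\phi^n)$, i.e.
\begin{align*}
\frac{1}{2\tau}\|\phi^{n+1}-\phi^n\|_\Va^2 + E(\phi^{n+1}) \le E(\phi^n).
\end{align*}
Summing over $n$ telescopes the energy and yields, for every $N$ and every index $n$,
\begin{align*}
E(\phi^n)\le E(\phi_0)
\tand
\sum_{k=1}^{N}\frac{1}{2\tau}\|\phi^k-\phi^{k-1}\|_\Va^2 \le E(\phi_0)-|\Omega|C_F-|\Gamma|C_G,
\end{align*}
where the lower bounds from (P2) have been used.

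Next I would read off the bounds on $\phi_N$. Because $F\ge C_F$, $G\ge C_G$ and the gradient contributions to $E$ are nonnegative, the inequality $E(\phi^n)\le E(\phi_0)$ bounds $\|\grad\phi^n\|_{L^2(\Omega)}$ and, when $\kappa>0$, also $\|\gradg\phi^n\|_{L^2(\Gamma)}$ uniformly in $n$ and $N$. Since the averages $\langle\phi^n\rangle_\Omega=m_1$ and $\langle\phi^n\rangle_\Gamma=m_2$ are preserved by the scheme, Poincaré's inequality converts these into \eqref{BND:PHIN} and, for $\kappa>0$, the $H^1(\Gamma)$-estimate in \eqref{BND:PHING}; for $\kappa=0$ the trace theorem (P5) applied to the uniform $H^1(\Omega)$-bound gives instead the $H^{1/2}(\Gamma)$-estimate. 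Turning to the potentials, the definitions of $\mathring\mu^{n+1}$ and $\mathring\mu_\Gamma^{n+1}$ via (P7) and the $\Va$-inner product of (P9) give $\grad\mathring\mu^{n+1}=-\tau^{-1}\grad\nmo(\phi^{n+1}-\phi^n)$ and the analogous surface identity, so that
\begin{align*}
\|\grad\mathring\mu^{n+1}\|_{L^2(\Omega)}^2+\|\gradg\mathring\mu_\Gamma^{n+1}\|_{L^2(\Gamma)}^2
= \tau^{-2}\|\phi^{n+1}-\phi^n\|_\Va^2.
\end{align*}
As $\grad\mu^n=\grad\mathring\mu^n$ and $\gradg\mu_\Gamma^n=\gradg\mathring\mu_\Gamma^n$ (the additive constants drop out), multiplying by $\tau$ and summing reproduces the dissipation sum, giving a uniform bound on $\int_0^T(\|\grad\mu_N\|_{L^2(\Omega)}^2+\|\gradg\mu_{\Gamma,N}\|_{L^2(\Gamma)}^2)\dt$.

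The main obstacle is the remaining step: controlling the means $c^n=\langle\mu^n\rangle_\Omega$ and $c_\Gamma^n=\langle\mu_\Gamma^n\rangle_\Gamma$, whose explicit formulas involve $F'(\phi^n)$, $G'(\phi^n)$ and $\mathring\mu^n$. The idea is to bound each contribution by a constant plus $\|\grad\mathring\mu^n\|_{L^2(\Omega)}$. Splitting $F'=F_1'+F_2'$, $G'=G_1'+G_2'$ and invoking the growth conditions (P2.3)--(P2.4) together with $E(\phi^n)\le E(\phi_0)$ and \eqref{BND:PHIN}--\eqref{BND:PHING}, one controls $\int_\Omega|F'(\phi^n)|\dx$, $\int_\Gamma|G'(\phi^n)|\dS$ and the corresponding integrals weighted by the fixed function $\eta_2$ uniformly in $n$; the term $\int_\Omega\mathring\mu^n\eta_2\dx$ is estimated by $\|\mathring\mu^n\|_{L^2(\Omega)}\le C\|\grad\mathring\mu^n\|_{L^2(\Omega)}$ (Poincaré, since $\mathring\mu^n$ has zero mean). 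This produces $|c^n|^2+|c_\Gamma^n|^2\le C\big(1+\|\grad\mathring\mu^n\|_{L^2(\Omega)}^2\big)$, so that $\sum_n\tau(|c^n|^2+|c_\Gamma^n|^2)$ is bounded by $CT$ plus the dissipation sum already controlled above. Finally, writing $\|\mu_N\|_{L^2(0,T;H^1(\Omega))}^2=\sum_n\tau\|\mu^n\|_{H^1(\Omega)}^2$ and using the Poincaré--Wirtinger inequality $\|\mu^n\|_{H^1(\Omega)}^2\le C(\|\grad\mu^n\|_{L^2(\Omega)}^2+|c^n|^2)$ yields \eqref{BND:MUN}, and the identical argument on $\Gamma$ gives \eqref{BND:MUNG}.
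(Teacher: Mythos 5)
Your proof is correct, and it shares the paper's backbone: the minimization inequality $J_n(\phi^{n+1})\le J_n(\phi^n)=E(\phi^n)$, the telescoping energy estimate, Poincar\'e's inequality combined with the conserved means for \eqref{BND:PHIN}--\eqref{BND:PHING}, and the identification of the dissipation sum $\sum_n \tau^{-1}\|\phi^n-\phi^{n-1}\|_\Va^2$ with the time-integrated squared gradients of $\mu_N$ and $\mu_{\Gamma,N}$. Where you genuinely depart from the paper is in the final step, recovering the full $H^1$-norms from the gradient bounds, i.e.\ the control of the means of the chemical potentials. The paper never touches the explicit constants $c^{n+1}$, $c_\Gamma^{n+1}$: for the bulk it tests \eqref{EQ:WSD3} with a fixed bump function $\eta\in C_c^\infty(\Omega;[0,1])$, deduces $\big|\int_\Omega \mu^{n+1}\eta \dx\big|\le C(\eta)$ from the growth conditions, and then invokes the generalized Poincar\'e inequality (Lemma \ref{LEM:GPI}, applied to the convex set $\mathcal M_\eta$) to obtain $\|\mu^{n+1}\|_{L^2(\Omega)}\le C\big(1+\|\grad\mu^{n+1}\|_{L^2(\Omega)}\big)$; for the surface it tests with $\eta\equiv 1$ and bounds the surface mean of $\mu_{\Gamma,N}$ by bulk quantities, including $\int_\Omega|\mu_N|\dx$. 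You instead estimate the explicit formulas for $c^{n+1}=\langle\mu^{n+1}\rangle_\Omega$ and $c_\Gamma^{n+1}=\langle\mu_\Gamma^{n+1}\rangle_\Gamma$ derived in the time-discretization step, using the standard Poincar\'e inequality for the mean-free $\mathring\mu^{n+1}$ and uniform bounds on $\int_\Omega|F'(\phi^{n+1})|\dx$ and $\int_\Gamma|G'(\phi^{n+1})|\dS$. Both mechanisms rest on the same potential estimates — (P2.3) with $\delta=1$, (P2.4), and the energy bound; note that bounding $\int_\Omega F_1(\phi^{n+1})\dx$ and $\int_\Gamma G_1(\phi^{n+1})\dS$ by constants also requires controlling $\int_\Omega F_2$ and $\int_\Gamma G_2$ via the quadratic growth from (P2.4) together with the already-established $L^2$-bounds on $\phi^{n+1}$, a point that you (and, in \eqref{EST:DF}, the paper) leave implicit. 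Your route is more elementary in that it bypasses Lemma \ref{LEM:GPI} entirely, using only Poincar\'e--Wirtinger; the price is that it is tied to the particular construction of the constants in Section 4.2, whereas the paper's argument uses only the weak formulation \eqref{EQ:WSD3} and would therefore apply verbatim to any functions satisfying it.
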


\begin{proof}
	In the following, the letter $C$ will denote a generic nonnegative constant that does not depend on $n$, $\tau$ or $N$ and may change its value from line to line. First, as for any ${n\in\{0,...,N-1\}}$, $\phi^{n+1}$ was chosen to be a minimizer of the functional $J_n$ on the set $\Vm$, we obtain the a priori bound
	\begin{align}
	\label{IEQ:APB}
	\frac{1}{2\tau}\|\phi^{n+1}-\phi^n\|_\Va^2 + E(\phi^{n+1}) = J_n(\phi^{n+1}) \le J_n(\phi^n)=E(\phi^n)
	\end{align}
	for all $n\in\{0,...,N-1\}$. It follows inductively that 
	\begin{align}
	\label{IEQ:APB2}
	E(\phi^{n+1})\le E(\phi_0), \quad n\in\{0,...,N-1\}.
	\end{align}
	From the definition of $E$ we conclude the uniform bound
	\begin{align}
	\label{EST:PHI}
	\begin{aligned}
	&\frac 1 2 \|\grad\phi^{n+1}\|_{L^2(\Omega)} + \frac \kappa 2 \|\gradg \phi^{n+1}\|_{L^2(\Gamma)} \le E(\phi_0) + C^*, \\
	&\text{with}\;\; C^*:=- C_F|\Omega| - C_G|\Gamma| , 
	\end{aligned}
	\end{align}
	for all $n\in\{0,...,N-1\}$. Recall that $\langle \phi^{n+1}\rangle_\Omega = m_1$ and $\langle \phi^{n+1}\rangle_\Gamma = m_2$. Thus, by Poincar\'e's inequality,
	\begin{align}
	\label{EST:PHI3}
	\begin{aligned}
	\|\phi^{n+1}\|_{L^2(\Omega)} &\le \|\langle \phi^{n+1} \rangle_\Omega \|_{L^2(\Omega)} 
	+ \|\phi^{n+1} - \langle \phi^{n+1} \rangle_\Omega \|_{L^2(\Omega)}\\
	& \le |\Omega|^{\frac 1 2}m_1 + C\,\|\grad \phi^{n+1}\|_{L^2(\Omega)} \le C
	\end{aligned}
	\end{align}
	and, as the corresponding trace operator lies in $\mathcal L\big(H^1(\Omega);H^{1/2}(\Gamma)\big)$, we also have
	\begin{align}
	\label{EST:PHI4}
	\|\phi^{n+1}\|_{H^{1/2}(\Gamma)} \le C\,\|\phi^{n+1}\|_{H^{1}(\Omega)}.
	\end{align}
	This means that $\phi_N$ is uniformly bounded in
	$L^\infty\big(0,T;H^1(\Omega)\big)$ and its trace $\phi_N\vert_\Gamma$ is
	uniformly bounded in $L^\infty\big(0,T;H^{1/2}(\Gamma)\big)$. If
	$\kappa>0$, the trace $\phi_N\vert_\Gamma$ is even uniformly bounded
	in $L^\infty\big(0,T;H^1(\Gamma)\big)$ according to
	\eqref{EST:PHI}. Recalling the definition of $\phi_N$, this proves \eqref{BND:PHIN} and \eqref{BND:PHING}.\\[1ex] Now, let $\eta\in C^\infty_c(\Omega;[0,1])$ be
	any function that is not identically zero. Then equation
	\eqref{EQ:WSD3} implies for all such $\eta$ that
	\begin{align*}
	\int\limits_\Omega \grad\phi^{n+1} \cdot \grad \eta \dx 
	+ \int\limits_\Omega F'(\phi^{n+1}) \eta \dx 
	= \int\limits_\Omega\mu^{n+1} \eta\dx. 
	\end{align*}
	Due to (P2.3) with $\delta=1$, the second summand on the left-hand side can be bounded by
	\begin{align}
	\label{EST:DF}
	\left| \int\limits_\Omega F'(\phi^{n+1}) \eta \dx \right| 
	&\le \int\limits_\Omega F_1(\phi^{n+1}) \dx + B_F|\Omega|^{1/2}\|\phi^{n+1}\|_{L^2(\Omega)} + \big(A^1_F + B_F\big)|\Omega| \notag\\[1ex]
	&\le E(\phi_0) +C^* + B_F|\Omega|^{1/2}\|\phi^{n+1}\|_{L^2(\Omega)} + \big(A^1_F + B_F\big)|\Omega|  \;\;\le C.
	\end{align}
	Hence there exists some constant $C(\eta)\ge 0$ such that
	\begin{align}
	\label{EST:MU}
	\left| \int\limits_\Omega \mu^{n+1}\,\eta \dx \right| \le C(\eta).
	\end{align}
	Let us now define the set
	\begin{align*}
	\mathcal M_\eta:=\left\{ v\in H^1(\Omega) \;\Big\vert\; \left|\textstyle\int_\Omega v \eta \dx \right| \le C(\eta) \right\}.
	\end{align*}
	This set is obviously a non-empty, closed and convex subset of $H^1(\Omega)$. This means that the generalized Poincar\'e inequality (Lemma \ref{LEM:GPI}) can be applied to this set with $u_0=0$ and 
	\begin{align*}
	C_0:= \frac{C(\eta)}{\left|\textstyle\int_\Omega \eta \dx\right|}
	\end{align*}
	because then all numbers $\xi\in\R$ with $\xi\chi_\Omega\in\mathcal M_\eta$ satisfy
	\begin{align*}
	|\xi| \le \frac{\left|\textstyle\int_\Omega \xi\eta \dx\right|}{\left|\textstyle\int_\Omega \eta \dx\right|} \le C_0.
	\end{align*}
	We obtain that 
	\begin{align*}
	\|\mu\|_{L^2(\Omega)} \le C\big(1+\|\grad\mu\|_{L^2(\Omega)}
	\big),\quad \mbox{ for all } \mu\in\mathcal M_\eta.
	\end{align*}
	We know from estimate \eqref{EST:MU} that $\mu^{n+1}\in\mathcal M_\eta$ and thus 
	\begin{align}
	\label{EST:MU2}
	\|\mu^{n+1}\|_{L^2(\Omega)} \le C\big(1+\|\grad\mu^{n+1}\|_{L^2(\Omega)} \big),\quad \text{for all}\;n\in\{0,...,N-1\}.
	\end{align}
	To establish a uniform bound on $\mu_N$, let $n\in\{1,...,N\}$ be arbitrary and set $t:=n\tau$. Then, for any $s\in]t-\tau,t]$, we have
	\begin{align*}
	\phi_N(s) = \phi_N(t) = \phi^n_N,\quad \mu_N(s)=\mu_N(t)=\mu^n_N \tand \mu_{\Gamma,N}(s)=\mu_{\Gamma,N}(t)=\mu_{\Gamma,N}^n.
	\end{align*}
	Using the a priori estimate \eqref{IEQ:APB} and the definition of $\mu_N$ and $\mu_{\Gamma,N}$, we obtain that
	\begin{align*}
	&E\big(\phi_N(t)\big) + \frac 1 2 \int\limits_{t-\tau}^t \|\grad\mu_N(s)\|^2_{L^2(\Omega)} 
	+ \|\gradg\mu_{\Gamma,N}(s)\|^2_{L^2(\Gamma)} \ds \\
	&\quad = E\big(\phi_N(t)\big) + \frac 1 2 \int\limits_{t-\tau}^t \frac{1}{\tau^2}\|\phi_N(s)-\phi_N(s-\tau)\|^2_\Va \ds \\
	&\quad = E\big(\phi_N(t)\big) + \frac 1 2 \int\limits_{t-\tau}^t \frac{1}{\tau^2}\|\phi_N(t)-\phi_N(t-\tau)\|^2_\Va \ds\\
	&\quad = E\big(\phi_N(t)\big) + \frac 1 {2\tau} \|\phi_N(t)-\phi_N(t-\tau)\|^2_\Va\\[2mm]
	&\quad \le E\big(\phi_N(t-\tau)\big).
	\end{align*}
	It follows inductively that
	\begin{align}
	\label{EST:NRG}
	E\big(\phi_N(t)\big) + \frac 1 2 \int\limits_{0}^t \|\grad\mu_N(s)\|^2_{L^2(\Omega)} 
	+ \|\gradg\mu_{\Gamma,N}(s)\|^2_{L^2(\Gamma)} \ds \le E(\phi_0)
	\end{align}
	and in particular,
	\begin{align}
	\label{EST:GMU}
	\int\limits_0^T  \|\grad \mu_N(s)\|_{L^2(\Omega)}^2 + \|\grad \mu_{\Gamma,N}(s)\|_{L^2(\Gamma)}^2 \ds \le 2\big(E(\phi_0) + C^*\big)
	\le C.
	\end{align}
	Inequality \eqref{EST:MU2} directly yields
	\begin{align}
	\|\mu_N(s)\|_{L^2(\Omega)} \le C\big(1+\|\grad\mu_N(s)\|_{L^2(\Omega)} \big), \quad s\in]0,T]
	\end{align}
	and thus we can conclude that $\mu_N$ is uniformly bounded in $L^2\big(0,T;H^1(\Omega)\big)$ which means that \eqref{BND:MUN} is established. \\[1ex]
	Testing equation \eqref{EQ:WSD3} with $\eta\equiv 1$ gives
	\begin{align}
	\label{EQ:MUGN}
	\int\limits_{\Gamma} \mu_{\Gamma,N}(t) \dS
	= \int\limits_{\Gamma}  G'\big(\phi_N(t)\big)\dS
	+ \int\limits_{\Omega} F'\big(\phi_N(t)\big) \dx
	- \int\limits_{\Omega} \mu_N(t) \dx, 
	\end{align}
	for all $t\in[0,T]$. From (P2.3) with $\delta=1$ and estimate \eqref{EST:NRG} we deduce that
	\begin{align}
	\label{EST:DG}
	\Bigg|\,\int\limits_{\Gamma} G'\big(\phi_N(t)\big) \dS\Bigg|
	&\le  \int\limits_\Gamma G_1\big(\phi_N(t)\big) \dx
	+ B_G |\Gamma|^{1/2} \|\phi_N(t)\|_{L^2(\Gamma)}
	+ (A_G^1 + B_G) |\Gamma_T| \notag\\[1ex]
	&\le E(\phi_0) + C^* + B_G |\Gamma|^{1/2} \|\phi_N\|_{L^\infty(0,T;L^2(\Gamma))}
	+ (A_G^1 + B_G) |\Gamma_T|.
	\end{align}
	for all $t\in[0,T]$. Recall that $\phi_N$ is uniformly bounded in $L^\infty\big(0,T;L^2(\Gamma)\big)$. The second integral on the right-hand side of \eqref{EQ:MUGN} can be bounded analogously and we obtain that
	\begin{align*} 
	\Bigg|\int\limits_{\Gamma} G'\big(\phi_N(t)\big) \dS \Bigg| \le C \tand \Bigg|\int\limits_{\Omega} F'\big(\phi_N(t)\big) \dx \Bigg| \le C, \quad t\in[0,T].
	\end{align*}
	Consequently,
	\begin{align*}
	\Bigg|\int\limits_\Gamma \mu_{\Gamma,N}(t) \dS\,\Bigg| \le C + \int\limits_\Omega |\mu_{N}(t)| \dx, \quad t\in[0,T] 
	\end{align*}
	and thus, since $\mu_N$ is uniformly bounded in $L^2(0,T;L^2(\Omega))$,
	\begin{align*}
	\int\limits_0^T \|\langle\mu_{\Gamma,N}(t)\rangle_\Gamma \|_{L^2(\Gamma)}^2 \dt 
	&\le C + C\int\limits_0^T \Bigg(\int\limits_\Omega |\mu_{N}(t)| \dx\Bigg)^2\mathrm dt \\
	&\le C+C\|\mu_N\|_{L^2(0,T;L^2(\Omega))}^2 \le C.
	\end{align*}
	Then, by Poincar\'e's inequality,
	\begin{align*}
	&\int\limits_0^T \|\mu_{\Gamma,N}(t)\|_{L^2(\Gamma)}^2 \dt\\
	&\quad\le C\int\limits_0^T \|\langle\mu_{\Gamma,N}(t)\rangle_\Gamma \|_{L^2(\Gamma)}^2 \dt
	+ C \int\limits_0^T \|\mu_{\Gamma,N}(t) - \langle\mu_{\Gamma,N}(t)\rangle_\Gamma\|_{L^2(\Gamma)}^2 \dt\\
	&\quad\le C + C \int\limits_0^T \|\grad_\Gamma \mu_{\Gamma,N}(t)\|_{L^2(\Gamma)}^2 \dt
	\end{align*}
	for all $t\in[0,T]$. Finally, \eqref{EST:GMU} implies that $\mu_{\Gamma,N}$ is uniformly bounded in the space $L^2\big(0,T;H^1(\Gamma)\big)$ which proves assertion \eqref{BND:MUNG}.
\end{proof}

\subsection{H\"older estimates for the piecewise continuous extension} 
Via interpolation type arguments we can show H\"older continuity in
time.

\begin{lemma}
	\label{LEM:HOE}
	There exists some constant $C>0$ that does not depend on $N$ such that for all $t_1,t_2\in[0,T]$,
	\begin{align}
	\label{ASS:S31}
	\|\bar\phi_N(t_1)-\bar\phi_N(t_2)\|_{L^2(\Omega)} &\le C\,|t_1-t_2|^{1/4},  \\
	\label{ASS:S32}
	\|\bar\phi_N(t_1)-\bar\phi_N(t_2)\|_{L^2(\Gamma)} &\le C\,|t_1-t_2|^{1/4}, \qquad\text{if}\; \kappa>0,\\
	\label{ASS:S33}
	\|\bar\phi_N(t_1)-\bar\phi_N(t_2)\|_{H^1(\Gamma)^*} &\le C\,|t_1-t_2|^{1/4}, \qquad\text{if}\; \kappa=0.	
	\end{align}
\end{lemma}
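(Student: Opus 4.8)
The plan is to first prove a Hölder estimate with the \emph{better} exponent $1/2$ in the dual norms $\|\cdot\|_{H^1(\Omega)^*}$ and $\|\cdot\|_{H^1(\Gamma)^*}$, and then to trade regularity for this exponent through an interpolation inequality, which is exactly the step that degrades $1/2$ to the claimed $1/4$ in the $L^2$-estimates. The starting observation is that on each subinterval $\,](n-1)\tau,n\tau[\,$ the piecewise linear extension has constant time derivative $\delt\bar\phi_N=(\phi^n-\phi^{n-1})/\tau$, and that by the definition of $\nmo$, $\nmg$ together with the norms fixed in \textnormal{(P8)} one has, on that subinterval,
\begin{align*}
\Big\|\frac{\phi^n-\phi^{n-1}}{\tau}\Big\|_{H^1(\Omega)^*} = \|\grad\mu_N\|_{L^2(\Omega)} \tand \Big\|\frac{\phi^n\vert_\Gamma-\phi^{n-1}\vert_\Gamma}{\tau}\Big\|_{H^1(\Gamma)^*} = \|\gradg\mu_{\Gamma,N}\|_{L^2(\Gamma)}.
\end{align*}
Here I use that $\phi^n-\phi^{n-1}$ has vanishing generalized average in $\Omega$ and on $\Gamma$ (both iterates lie in $\Vm$), so that $\nmo$ and $\nmg$ may be applied, and that adding the constants $c^n,c_\Gamma^n$ leaves the gradients of $\mu_N,\mu_{\Gamma,N}$ untouched.

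Next I would integrate this derivative. As $\bar\phi_N$ is absolutely continuous in time with values in $H^1(\Omega)^*$, the fundamental theorem of calculus and the Cauchy--Schwarz inequality give, for $t_1<t_2$,
\begin{align*}
\|\bar\phi_N(t_1)-\bar\phi_N(t_2)\|_{H^1(\Omega)^*} \le \int_{t_1}^{t_2}\|\grad\mu_N(s)\|_{L^2(\Omega)}\ds \le |t_1-t_2|^{1/2}\Big(\int_0^T\|\grad\mu_N(s)\|_{L^2(\Omega)}^2\ds\Big)^{1/2}.
\end{align*}
The last integral is uniformly bounded by \eqref{EST:GMU} (equivalently \eqref{BND:MUN}), giving $\|\bar\phi_N(t_1)-\bar\phi_N(t_2)\|_{H^1(\Omega)^*}\le C|t_1-t_2|^{1/2}$, and the identical argument on $\Gamma$ with \eqref{BND:MUNG} yields $\|\bar\phi_N(t_1)-\bar\phi_N(t_2)\|_{H^1(\Gamma)^*}\le C|t_1-t_2|^{1/2}$. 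Since $|t_1-t_2|\le T$, the surface estimate already implies \eqref{ASS:S33} after absorbing a factor $T^{1/4}$ into the constant.

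Finally, to reach \eqref{ASS:S31} and \eqref{ASS:S32}, I would invoke the interpolation inequality
\begin{align*}
\|v\|_{L^2(\Omega)}^2 = \big(\grad\,\nmo v,\grad v\big)_{L^2(\Omega)} \le \|v\|_{H^1(\Omega)^*}\,\|v\|_{H^1(\Omega)},
\end{align*}
valid for mean-value-free $v$, together with its surface analogue using $\nmg$. Applying this to $v=\bar\phi_N(t_1)-\bar\phi_N(t_2)$ --- which is mean-value free in $\Omega$ and on $\Gamma$ because each $\phi^n\in\Vm$ and hence so is the convex combination $\bar\phi_N$ --- and bounding the $H^1(\Omega)$-factor by $2C_1$ (respectively the $H^1(\Gamma)$-factor by $2C_3$ when $\kappa>0$) via Lemma \ref{LEM:BND}, which also controls the piecewise linear extension as a convex combination of the $\phi^n$, the dual-norm estimate from the previous step produces $\|\bar\phi_N(t_1)-\bar\phi_N(t_2)\|_{L^2(\Omega)}^2\le C|t_1-t_2|^{1/2}$; taking the square root gives the exponent $1/4$, and the surface case is identical. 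The only genuinely structural point to watch is the case $\kappa=0$: there \eqref{BND:PHING} controls the trace only in $L^\infty(0,T;H^{1/2}(\Gamma))$, so the $H^1(\Gamma)$-factor needed to close the interpolation on $\Gamma$ is unavailable, which is precisely why for $\kappa=0$ only the dual-norm estimate \eqref{ASS:S33} is asserted. A minor technical check is that the interpolation inequality holds for the specific equivalent norms of \textnormal{(P8)}; since all quantities are mean-value free this reduces to the displayed identity, which is the weak formulation of $-\laplace\,\nmo v=v$ tested against $v$ itself.
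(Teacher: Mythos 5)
Your proof is correct, and it takes a genuinely different route from the paper's. The paper never states an intermediate dual-norm estimate: for \eqref{ASS:S31} and \eqref{ASS:S32} it tests the weak identities \eqref{EQ:BPHI}, \eqref{EQ:BPHIG} directly with the fixed function $\zeta=\bar\phi_N(t_2)-\bar\phi_N(t_1)$ (resp.\ its trace), integrates from $t_1$ to $t_2$, and bounds the resulting term $\int_{t_1}^{t_2}(\grad\mu_N,\grad\zeta)_{L^2}\,\mathrm dt$ by Cauchy--Schwarz in time, obtaining the squared $L^2$-norm $\le C|t_2-t_1|^{1/2}$ in one stroke; for \eqref{ASS:S33} it instead tests with $\xi=\nmg\big(\bar\phi_N(t_2)-\bar\phi_N(t_1)\big)$ and integrates by parts back, which bounds the \emph{squared} dual norm by $2\|\mu_{\Gamma,N}\|_{L^2(0,T;L^2(\Gamma))}\|\bar\phi_N\|_{L^\infty(0,T;L^2(\Gamma))}|t_2-t_1|^{1/2}$ and hence only yields exponent $1/4$ there. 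You instead exploit the gradient-flow structure more explicitly: the identification $\|\delt\bar\phi_N\|_{H^1(\Omega)^*}=\|\grad\mu_N\|_{L^2(\Omega)}$ (and its surface analogue), valid by the definition of $\mathring\mu^{n+1}$, $\mathring\mu_\Gamma^{n+1}$ and the norms in (P8) since constant shifts do not affect gradients, gives via the fundamental theorem of calculus a $1/2$-H\"older estimate in both dual norms, and the elementary interpolation $\|v\|_{L^2}^2\le\|v\|_{H^{1\ast}}\|v\|_{H^1}$ for mean-value-free $v$ then converts this into the $1/4$-estimates where the uniform $H^1$-bounds of Lemma \ref{LEM:BND} are available. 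What your factorization buys: a strictly stronger intermediate result ($1/2$-H\"older in $H^1(\Gamma)^*$, compared with the $1/4$ the paper proves and claims in \eqref{ASS:S33}); for the surface estimate you need only the gradient bound \eqref{EST:GMU} rather than the full bound \eqref{BND:MUNG} on $\mu_{\Gamma,N}$ itself; and the case distinction at $\kappa=0$ is explained structurally (the $H^1(\Gamma)$-factor in the interpolation is simply unavailable). What the paper's route buys: it stays entirely at the level of the weak formulations \eqref{EQ:BPHI}, \eqref{EQ:BPHIG} and the bounds of Lemma \ref{LEM:BND}, without invoking the identification of $\delt\bar\phi_N$ with the chemical potentials through $\nmo$, $\nmg$. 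Both arguments ultimately rest on the same ingredients (Cauchy--Schwarz in time, \eqref{EST:GMU}, and the uniform $H^1$-bounds), so this is a reorganization rather than a new mechanism, but it is a cleaner and slightly sharper one.
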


\begin{proof}
	Let $t_1,t_2\in[0,T]$ be arbitrary. Without loss of generality, we assume that $t_1<t_2$. Since $\bar\phi_N$ is piecewise linear in time, it is weakly differentiable with respect to $t$ and we can rewrite the equations \eqref{EQ:WSD1} and \eqref{EQ:WSD2} as 
	\begin{align}
	\label{EQ:BPHI}
	&\int\limits_\Omega \partial_t \bar\phi_N(t)\, \zeta \dx 
	= -\int\limits_\Omega \grad\mu_N(t) \cdot \grad\zeta \dx
	&& \hspace{-15mm}\text{for all}\; \zeta\in H^1(\Omega),\\
	\label{EQ:BPHIG}
	&\int\limits_\Gamma \partial_t \bar\phi_N(t)\, \xi \dS 
	= -\int\limits_\Gamma \gradg \mu_{\Gamma,N}(t) \cdot \gradg \xi \dS
	&& \hspace{-15mm}\text{for all}\; \xi\in H^1(\Gamma)
	\end{align}
	for all $t\in[0,T]$. Choosing $\zeta:=\bar\phi_N(t_2)-\bar\phi_N(t_1)$
	and integrating with respect to $t$ from $t_1$ to $t_2$ we obtain that\vspace{-3mm}
	\begin{align*}
	&\|\bar\phi_N(t_2)-\bar\phi_N(t_1)\|_{L^2(\Omega)}^2
	= -\int\limits_{t_1}^{t_2}\int\limits_\Omega \grad\mu_N(t) \cdot \big(\grad\bar\phi_N(t_2)-\grad\bar\phi_N(t_1)\big) \dx\dt \\[1mm]
	&\quad \le 2\|\phi_N\|_{L^\infty(0,T;H^1(\Omega))}\, \|\mu_N\|_{L^2(0,T;H^1(\Omega))}\, |t_2-t_1|^{1/2} \le C\, |t_2-t_1|^{1/2},
	\end{align*}
	which proves assertion \eqref{ASS:S31}. Assertion \eqref{ASS:S32} can be proved analogously by choosing the test function ${\xi:=\bar\phi_N\vert_\Gamma(t_2)-\bar\phi_N\vert_\Gamma(t_1)}$. However, if $\kappa=0$, we have to proceed differently since $(\bar\phi_N\vert_\Gamma)_{N\in\N}$ is merely bounded in $L^\infty\big(0,T;L^2(\Gamma)\big)$. As $\langle \bar\phi(t_2)-\bar\phi(t_1) \rangle_\Gamma = 0$ it holds that
	\begin{align*}
	&\|\bar\phi(t_2)-\bar\phi(t_1)\|^2_{H^1(\Gamma)^*} \\
	&\quad= \int\limits_\Gamma  
	\gradg(-\laplace_\Gamma)^{-1} \big(\bar\phi_N(t_2)-\bar\phi_N(t_1)\big) \cdot 
	\gradg(-\laplace_\Gamma)^{-1} \big(\bar\phi_N(t_2)-\bar\phi_N(t_1)\big) \dS\\
	&\quad= \int\limits_\Gamma  
	\big(\bar\phi_N(t_2)-\bar\phi_N(t_1)\big) \;
	(-\laplace_\Gamma)^{-1} \big(\bar\phi_N(t_2)-\bar\phi_N(t_1)\big) \dS.	
	\end{align*}
	Hence, choosing $\xi:=(-\laplace_\Gamma)^{-1} \big(\bar\phi_N(t_2)-\bar\phi_N(t_1)\big)\in H^1(\Gamma)$ yields
	\begin{align*}
	&\|\bar\phi(t_2)-\bar\phi(t_1)\|^2_{H^1(\Gamma)^*} 
	= \int\limits_\Gamma  
	\big(\bar\phi_N(t_2)-\bar\phi_N(t_1)\big) \; \xi \dS
	=  -\int\limits_{t_1}^{t_2} \int\limits_\Gamma \gradg \mu_{\Gamma,N}(t) \cdot \gradg \xi \dS\dt	\\
	&\quad = \int\limits_{t_1}^{t_2}\int\limits_\Gamma \mu_{\Gamma,N}(t) \;
	\big(\bar\phi_N(t_2)-\bar\phi_N(t_1)\big) \dS \dt\\
	&\quad \le 2 \|\mu_{\Gamma,N}\|_{L^2(0,T;L^2(\Gamma))}\, \|\bar\phi_N\|_{L^\infty(0,T;L^2(\Gamma))}
	\, |t_2-t_1|^{1/2},
	\end{align*}
	which proves assertion \eqref{ASS:S33}.
\end{proof}

\subsection{Convergence of the approximate solution}
In this section we will use the apriori estimates and compactness
arguments to show convergence of the time discrete solutions.

\begin{lemma}
	\label{LEM:CONV}
	There exist functions 
	\begin{align*}
	&\phi\in L^\infty\big(0,T;H^1(\Omega)\big) \cap C^{0,\frac 1 4}\big([0,T];L^2(\Omega)\big)\\
	&\psi \in
	\begin{cases}
	L^\infty\big(0,T;H^1(\Gamma)\big) \cap C^{0,\frac 1 4}\big([0,T];L^2(\Gamma)\big), &\kappa>0,\\
	L^\infty\big(0,T;H^{1/2}(\Gamma)\big) \cap C^{0,\frac 1 4}\big([0,T];H^1(\Gamma)^*\big) \cap C\big([0,T];L^2(\Gamma)\big), &\kappa=0,
	\end{cases}\\[2mm]
	&\mu\in L^2\big(0,T;H^1(\Omega)\big),\\
	&\mu_\Gamma\in L^2\big(0,T;H^1(\Gamma)\big)
	\end{align*}
	such that for any $\gamma\in]0,\frac 1 4[$,
	\begin{align}
	\label{ASS:CPHI4}
	\phi_N &\wsto \phi &&\hspace{-25mm} \quad \text{in}\;\; L^\infty\big(0,T;H^1(\Omega)\big),\\
	\label{ASS:CPHI}
	\bar\phi_N &\to \phi &&\hspace{-25mm} \quad \text{in}\;\; C^{0,\gamma}\big([0,T];L^2(\Omega)\big), \\
	\label{ASS:CPHI2}
	\phi_N &\to \phi &&\hspace{-25mm} \quad \text{in}\;\; L^\infty\big(0,T;L^2(\Omega)\big),\\
	\label{ASS:CPHI3}
	\phi_N &\to \phi &&\hspace{-25mm} \quad \text{almost everywhere in}\; \Omega_T, \\[2mm]
	\label{ASS:CPHIG4}
	\phi_N\vert_\Gamma &\wsto \psi  &&\hspace{-25mm} \quad \text{in}\;\; 
	\begin{cases}
	L^\infty\big(0,T;H^1(\Gamma)\big) , &\kappa>0,\\
	L^\infty\big(0,T;H^{1/2}(\Gamma)\big) , &\kappa=0,
	\end{cases}\\[2mm]
	\label{ASS:CPHIG}
	\bar\phi_N\vert_\Gamma &\to \psi  &&\hspace{-25mm} \quad \text{in}\;\; 
	\begin{cases}
	C^{0,\gamma}\big([0,T];L^2(\Gamma)\big), &\kappa>0,\\
	C^{0,\gamma}\big([0,T];H^1(\Gamma)^*\big), &\kappa=0,
	\end{cases}\\[2mm]
	\label{ASS:CPHIG2}
	\phi_N\vert_\Gamma &\to \psi  &&\hspace{-25mm} \quad \text{in}\;\; 
	\begin{cases}
	L^\infty\big(0,T;L^2(\Gamma)\big) , &\kappa>0,\\
	L^\infty\big(0,T;H^1(\Gamma)^*\big) , &\kappa=0,
	\end{cases}\\[2mm]
	\label{ASS:CPHIG3}
	\phi_N\vert_\Gamma &\to \psi  &&\hspace{-25mm} \quad \text{almost everywhere on}\; \Gamma_T,  \\
	\label{ASS:CPHI5}
	\phi_N\vert_\Gamma &\to \psi &&\hspace{-25mm} \quad \text{in}\;\; C\big([0,T];L^2(\Gamma)\big),
	\\[2mm]
	\label{ASS:CMU}
	\mu_N &\wto \mu &&\hspace{-25mm} \quad \text{in}\;\; L^2\big(0,T;H^1(\Omega)\big), \\
	\label{ASS:CMUG}
	\mu_{\Gamma,N} &\wto \mu_\Gamma &&\hspace{-25mm} \quad \text{in}\;\; L^2\big(0,T;H^1(\Gamma)\big)
	\end{align}
	up to a subsequence as $N\to\infty$. Moreover, it holds that $\psi=\phi\vert_\Gamma$ almost everywhere on $\Gamma_T$.
\end{lemma}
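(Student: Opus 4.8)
The plan is to pass to the limit $N\to\infty$ along a single common subsequence by combining the $N$-uniform bounds of Lemma \ref{LEM:BND} with the time-equicontinuity estimates of Lemma \ref{LEM:HOE}. First I would extract the weak and weak-$*$ limits. Since $(\phi_N)$ is bounded in $L^\infty(0,T;H^1(\Omega))$, its trace $(\phi_N\vert_\Gamma)$ is bounded in $L^\infty(0,T;H^1(\Gamma))$ if $\kappa>0$ and in $L^\infty(0,T;H^{1/2}(\Gamma))$ if $\kappa=0$, and $(\mu_N)$, $(\mu_{\Gamma,N})$ are bounded in $L^2(0,T;H^1(\Omega))$ and $L^2(0,T;H^1(\Gamma))$. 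The Banach--Alaoglu theorem, together with the reflexivity of the $L^2$-based spaces, then yields after extraction of a subsequence the limit functions $\phi,\psi,\mu,\mu_\Gamma$ with the stated regularity and the convergences \eqref{ASS:CPHI4}, \eqref{ASS:CPHIG4}, \eqref{ASS:CMU} and \eqref{ASS:CMUG}.

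Next I would obtain strong convergence of the piecewise linear interpolant $\bar\phi_N$ by a vector-valued Arzel\`a--Ascoli argument. Lemma \ref{LEM:HOE} bounds $\bar\phi_N$ uniformly in $C^{0,1/4}([0,T];L^2(\Omega))$ and, depending on $\kappa$, in $C^{0,1/4}([0,T];L^2(\Gamma))$ or $C^{0,1/4}([0,T];H^1(\Gamma)^*)$, which provides $N$-uniform time-equicontinuity. Since each $\bar\phi_N(t)$ is a convex combination of the $\phi^n$, it is bounded in $H^1(\Omega)$ uniformly in $t$ and $N$, and the compact embeddings $H^1(\Omega)\hookrightarrow\hookrightarrow L^2(\Omega)$, $H^1(\Gamma)\hookrightarrow\hookrightarrow L^2(\Gamma)$ and $H^{1/2}(\Gamma)\hookrightarrow\hookrightarrow H^1(\Gamma)^*$ give relative compactness at each fixed time. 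Arzel\`a--Ascoli in $C([0,T];\,\cdot\,)$ then produces a subsequence converging in $C([0,T];L^2(\Omega))$ and in the corresponding boundary space; the interpolation inequality for H\"older spaces upgrades this to convergence in $C^{0,\gamma}$ for every $\gamma\in]0,1/4[$, giving \eqref{ASS:CPHI} and \eqref{ASS:CPHIG}, and showing $\phi\in C^{0,1/4}([0,T];L^2(\Omega))$ since the $C^{0,1/4}$-ball is closed under uniform convergence.

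Then I would transfer this to the piecewise constant family and identify all limits. On each subinterval $](n-1)\tau,n\tau]$ one has $\|\phi_N(t)-\bar\phi_N(t)\|_{L^2(\Omega)}\le\|\phi^n-\phi^{n-1}\|_{L^2(\Omega)}\le C\tau^{1/4}$ by Lemma \ref{LEM:HOE}, so $\|\phi_N-\bar\phi_N\|_{L^\infty(0,T;L^2(\Omega))}\to 0$; hence $\phi_N$ shares the strong limit of $\bar\phi_N$, which by uniqueness of limits equals the weak-$*$ limit $\phi$. This yields \eqref{ASS:CPHI2}, and a further subsequence gives the pointwise convergence \eqref{ASS:CPHI3}; the boundary analogues \eqref{ASS:CPHIG2} and \eqref{ASS:CPHIG3} follow identically. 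For the sharper statement \eqref{ASS:CPHI5} in the case $\kappa=0$ I would interpolate, using $\|v\|_{L^2(\Gamma)}\le C\|v\|_{H^1(\Gamma)^*}^{1/3}\|v\|_{H^{1/2}(\Gamma)}^{2/3}$ together with the uniform $H^{1/2}(\Gamma)$-bound and the $C([0,T];H^1(\Gamma)^*)$-convergence already obtained. Finally, since the trace operator is bounded, hence weak-$*$ continuous, from $H^1(\Omega)$ into $H^{1/2}(\Gamma)$, the convergence $\phi_N\wsto\phi$ forces $\phi_N\vert_\Gamma\wsto\phi\vert_\Gamma$; comparing with $\phi_N\vert_\Gamma\to\psi$ identifies $\psi=\phi\vert_\Gamma$ almost everywhere on $\Gamma_T$.

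I expect the main obstacle to be the bookkeeping that forces the two extensions and all the different topologies to converge along one common subsequence to a single consistent pair of limits. In particular, the piecewise constant family $\phi_N$ carries only $L^\infty$-in-time bounds and no equicontinuity, so it must inherit its strong convergence from $\bar\phi_N$ via the $O(\tau^{1/4})$ closeness estimate; and the weaker $\kappa=0$ boundary setting, where equicontinuity is available only in $H^1(\Gamma)^*$, has to be handled by the interpolation step above in order to recover the full $C([0,T];L^2(\Gamma))$ convergence.
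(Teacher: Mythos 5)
Your proposal is correct, and while its first half (Banach--Alaoglu extraction, vector-valued Arzel\`a--Ascoli for $\bar\phi_N$ plus H\"older interpolation, and the $O(\tau^{1/4})$ transfer from $\bar\phi_N$ to $\phi_N$) coincides with the paper's argument, you handle the two most delicate points --- the convergence \eqref{ASS:CPHI5} in the case $\kappa=0$ and the identification $\psi=\phi\vert_\Gamma$ --- by a genuinely different route. The paper proves both via the Aubin--Lions lemma: it reads off from the discrete equations that $\delt\bar\phi_N=\laplace_\Gamma\mu_{\Gamma,N}$ and $\delt\bar\phi_N=\laplace\mu_N$ are bounded in $L^2\big(0,T;H^1(\Gamma)^*\big)$ and $L^2\big(0,T;H^1(\Omega)^*\big)$, deduces $\bar\phi_N\vert_\Gamma\to\psi$ in $C\big([0,T];L^2(\Gamma)\big)$ from the first bound, and $\bar\phi_N\to\phi$ in $C\big([0,T];H^s(\Omega)\big)$ for some $s>\tfrac12$ from the second, whence $\psi=\phi\vert_\Gamma$ by continuity of the trace on $H^s(\Omega)$. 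You instead use only the compactness already established: the interpolation inequality $\|v\|_{L^2(\Gamma)}\le C\|v\|_{H^1(\Gamma)^*}^{1/3}\|v\|_{H^{1/2}(\Gamma)}^{2/3}$ (whose exponents are correct) upgrades the $C\big([0,T];H^1(\Gamma)^*\big)$ convergence together with the uniform $L^\infty\big(0,T;H^{1/2}(\Gamma)\big)$ bound to \eqref{ASS:CPHI5}, and weak-$*$ stability of the trace identifies $\psi$. This is more economical --- no further recourse to the discrete evolution equations and no Aubin--Lions at all --- whereas the paper's route yields, as a by-product, strong convergence of $\bar\phi_N$ in $C\big([0,T];H^s(\Omega)\big)$. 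Two small points should be made explicit to make your version airtight: (i) the interpolation inequality is applied to $\bar\phi_N(t)-\psi(t)$, so you need $\psi(t)\in H^{1/2}(\Gamma)$ with a uniform bound for \emph{every} $t$, not merely a.e.\ $t$; this follows by weak lower semicontinuity along $\bar\phi_N(t)\to\psi(t)$ in $H^1(\Gamma)^*$, but is not automatic from $\psi\in L^\infty\big(0,T;H^{1/2}(\Gamma)\big)$. (ii) ``Bounded, hence weak-$*$ continuous'' is not a valid implication in general: a bounded operator between dual spaces is weak-$*$-to-weak-$*$ continuous only when it is an adjoint. Here the pointwise-in-time trace is indeed the adjoint of $g\mapsto \mathrm{tr}^*\circ g$ on $L^1\big(0,T;H^{1/2}(\Gamma)^*\big)$, so your conclusion stands; alternatively, and more simply, note that $\phi_N\wsto\phi$ in $L^\infty\big(0,T;H^1(\Omega)\big)$ implies $\phi_N\wto\phi$ in $L^2\big(0,T;H^1(\Omega)\big)$, and bounded linear maps are weak-to-weak sequentially continuous, so $\phi_N\vert_\Gamma\wto\phi\vert_\Gamma$ in $L^2(\Gamma_T)$, which already forces $\psi=\phi\vert_\Gamma$ almost everywhere on $\Gamma_T$.
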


\begin{proof}
	Due to the bounds \eqref{BND:PHIN}-\eqref{BND:MUNG} that have been established in Lemma \ref{LEM:BND}, we can conclude from the Banach-Alaoglu theorem that there exist functions 
	\begin{align*}
	&\phi\in L^\infty\big(0,T;H^1(\Omega)\big),
	&&\psi \in
	\begin{cases}
	L^\infty\big(0,T;H^1(\Gamma)\big), &\kappa>0,\\
	L^\infty\big(0,T;H^{1/2}(\Gamma)\big), &\kappa=0,
	\end{cases}\\[2mm]
	&\mu\in L^2\big(0,T;H^1(\Omega)\big),
	&&\mu_\Gamma\in L^2\big(0,T;H^1(\Gamma)\big)
	\end{align*}
	such that, after extraction of a subsequence, 
	\begin{alignat*}{4}
	&\phi_N \wsto \phi \quad
	&&\text{in}\quad L^\infty\big(0,T;H^1(\Omega)\big),\qquad
	&&\phi_N\vert_\Gamma \wsto \psi \quad
	&&\text{in}\quad 
	\begin{cases}
	L^\infty\big(0,T;H^1(\Gamma)\big), &\kappa>0,\\
	L^\infty\big(0,T;H^{1/2}(\Gamma)\big), &\kappa=0,
	\end{cases}\\[2mm]
	&\mu_N \wto \mu 	\quad 
	&&\text{in}\quad L^2\big(0,T;H^1(\Omega)\big),
	&&\mu_{\Gamma,N} \wto \mu_\Gamma	\quad
	&&\text{in}\quad L^2\big(0,T;H^1(\Gamma)\big).
	\end{alignat*}
	Hence, the assertions \eqref{ASS:CPHI4}, \eqref{ASS:CPHIG4}, \eqref{ASS:CMU} and \eqref{ASS:CMUG} are established. \\[1ex]
	Recall that $\bar\phi_N$ lies in $C\big([0,T];L^2(\Omega)\big)$ and is bounded uniformly in $L^\infty\big(0,T;H^1(\Omega)\big)$ by 
	\begin{align*}
	\|\bar\phi_N\|_{L^\infty(0,T;H^1(\Omega))} \le \|\phi_N\|_{L^\infty(0,T;H^1(\Omega))} \le C_1
	\end{align*} 
	according to \eqref{BND:PHIN} for all $N\in\N$. Since the embedding
	from $H^1(\Omega)$ to $L^2(\Omega)$ is compact, we can use the
	equicontinuity of $(\bar\phi_N)_{N\in\N}$ (which follows directly from
	\eqref{ASS:S31}) to apply the theorem of Arzelà--Ascoli for functions
	with values in a Banach space (see J. Simon
	\cite[Lem.\,1]{simon}). The theorem implies that
	\begin{align*}
	\bar\phi_N \to \phi \quad \text{in}\;\; C([0,T];L^2(\Omega)\big).
	\end{align*}
	Using \eqref{ASS:S31} one can easily show that $\phi\in C^{0,\frac 1 4}\big([0,T];L^2(\Omega)\big)$. For any $\gamma\in]0,\tfrac 1 4[$, we obtain by interpolation that
	\begin{align*}
	\|\cdot\|_{C^{0,\gamma}([0,T];L^2(\Omega))} 
	\le C\, \|\cdot\|_{C^{0,1/4}([0,T];L^2(\Omega))}^{4\gamma} \|\cdot\|_{C([0,T];L^2(\Omega))}^{1-4\gamma}.
	\end{align*}
	Hence, it also holds that
	\begin{align*}
	\bar\phi_N \to \phi \quad \text{in}\;\; C^{0,\gamma}\big([0,T];L^2(\Omega)\big)
	\end{align*}
	for every $\gamma\in]0,\frac 1 4[$. This proves \eqref{ASS:CPHI}. Assertion \eqref{ASS:CPHIG} can be established analogously; in the case $\kappa=0$ we have to use the compact embedding $L^2(\Gamma) \cong L^2(\Gamma)^* \hookrightarrow H^1(\Gamma)^*$ instead.\\[1ex]
	For any $t\in[0,T]$ we can choose $n\in\{1,...,N\}$ and $\alpha\in [0,1]$ such that $t=\alpha n\tau + (1-\alpha)(n-1)\tau$. We have
	\begin{align*}
	&\|\bar\phi_N(t)-\phi_N(t)\|_{L^2(\Omega)} \le \| \alpha \phi^n + (1-\alpha) \phi_N^{n-1}(t) - \phi_N^n(t) \|_{L^2(\Omega)} \\
	&\quad = (1-\alpha)\,\| \phi_N^n(t)  - \phi_N^{n-1}(t) \|_{L^2(\Omega)} \le C\tau^{1/4}
	\end{align*}
	which tends to zero as $N$ tends to infinity. Proceeding similarly, we obtain that
	\begin{align*}
	\|\bar\phi_N(t)-\phi_N(t)\|_{L^2(\Gamma)} &\le C\tau^{1/4}
	\underset{N\to\infty}{\longrightarrow} 0, &&\hspace{-20mm}\text{if}\; \kappa >0,\\
	\|\bar\phi_N(t)-\phi_N(t)\|_{H^1(\Gamma)^*} &\le C\tau^{1/4}
	\underset{N\to\infty}{\longrightarrow} 0, &&\hspace{-20mm}\text{if}\; \kappa =0.
	\end{align*}
	Together with \eqref{ASS:CPHI} and \eqref{ASS:CPHIG} this implies \eqref{ASS:CPHI2} and \eqref{ASS:CPHIG2}. In particular, this means that $\phi_N \to \phi$ in $L^2(\Omega_T)$ and $\phi_N\vert_\Gamma \to \phi\vert_\Gamma$ in $L^2(\Gamma_T)$ if $\kappa>0$. Therefore we can extract a subsequence that converges almost everywhere. This proves \eqref{ASS:CPHI3} and \eqref{ASS:CPHIG3} in the case $\kappa >0$.\\[1ex]
	By integration, \eqref{EQ:BPHIG} implies that
	\begin{align*}
	\int\limits_{\Gamma_T} \delt\bar\phi_N \,\xi \dSx\dt = -\int\limits_{\Gamma_T} \grad_\Gamma \mu_{\Gamma,N} \cdot \grad_\Gamma \xi \dxt
	\end{align*}
	for all functions $\xi\in L^2\big(0,T;H^1(\Gamma)\big)$ with $\delt \xi\in L^2(\Gamma_T)$ and consequently
	\begin{align*}
	\delt\bar\phi_N = \laplace_\Gamma \mu_{\Gamma,N} \quad\text{in}\; L^2\big(0,T;H^1(\Gamma)^*\big)
	\end{align*}
	for all $N\in\N$. This means that $\delt\phi_N$ is uniformly bounded by
	\begin{align*}
	\|\delt\bar\phi_N\|_{L^2(0,T;H^1(\Gamma)^*)} 
	= \|\laplace_\Gamma\mu_{\Gamma,N}\|_{L^2(0,T;H^1(\Gamma)^*)} 
	= \left\|\grad_\Gamma \mu_{\Gamma,N} \right\|_{L^2(0,T;L^2(\Gamma))} \le C_5.
	\end{align*}
	according to \eqref{BND:MUNG} and the definition of $\mu_{\Gamma,N}$. Hence, the sequence $(\bar\phi_N)_{N\in\N}$ is bounded in $L^\infty\big(0,T;H^{1/2}(\Gamma)\big) \cap H^1\big(0,T;H^1(\Gamma)^*\big)$.
	Since $H^{1/2}(\Gamma)$ is compactly embedded in $L^2(\Gamma)$ and $L^2(\Gamma) \cong L^2(\Gamma)^*$ is continuously embedded in $H^1(\Gamma)^*$ we can use the Aubin-Lions lemma (cf. J. Simon\cite[Cor.\,5]{simon}) to conclude that 
	\begin{align*}
	\bar\phi_N\vert_\Gamma \to \psi \;\text{in}\; C\big([0,T];L^2(\Gamma)\big).
	\end{align*}
	up to a subsequence which is \eqref{ASS:CPHI5}. Then assertion \eqref{ASS:CPHIG3} in the case $\kappa =0$ immediately follows after another subsequence extraction. \\[1ex]
	Similarly, we can use \eqref{EQ:BPHI} to conclude that
	\begin{align*}
	\delt\bar\phi_N = \laplace \mu_{\Gamma,N} \quad\text{in}\; L^2\big(0,T;H^1(\Omega)^*\big)
	\end{align*}
	with
	\begin{align*}
	\|\delt\bar\phi_N\|_{L^2(0,T;H^1(\Omega)^*)} 
	= \|\laplace_\Gamma\mu_{\Gamma,N}\|_{L^2(0,T;H^1(\Omega)^*)} 
	= \left\|\grad_\Gamma \mu_{\Gamma,N} \right\|_{L^2(0,T;L^2(\Omega))} \le C_4
	\end{align*}
	and therefore, $(\bar\phi_N)_{N\in\N}$ is bounded in $L^\infty\big(0,T;H^1(\Omega)\big) \cap H^1\big(0,T;H^1(\Omega)^*\big)$. It holds that
	\begin{align*}
	H^1(\Omega) \hookrightarrow H^{s}(\Omega) \hookrightarrow L^2(\Omega) \cong L^2(\Omega)^* \hookrightarrow H^1(\Omega)^*\quad\text{for any}\; s\in\,]0,1[
	\end{align*}
	where all embeddings are continuous and at least the first embedding is compact. Now, the Aubin-Lions lemma (cf. J. Simon\cite[Cor.\,5]{simon}) implies that $\phi_N\vert_\Gamma \to \phi$ in $C\big([0,T];H^s(\Omega)\big)$ for any fixed $s\in\,]0,1[$ up to a subsequence. Choosing $s>\frac 1 2$, we can conclude from the continuity of the trace operator (see (P5)) that
	\begin{align*}
	\bar\phi_N\vert_\Gamma \to \phi\vert_\Gamma \;\text{in}\; C\big([0,T];L^2(\Gamma)\big).
	\end{align*}
	after subsequence extraction. This finally proves $\phi\vert_\Gamma = \psi$.
\end{proof}

\subsection{Proof of the existence theorem} 

We can finally prove our main result Theorem \ref{MAIN} by showing that the limit $(\phi,\mu,\mu_\Gamma)$ from Lemma \ref{LEM:CONV} is a weak solution of the Cahn--Hilliard equation \eqref{CHLW} in the sense of Definition \ref{DEF:WS}.\\[1ex]

\textit{Proof of Theorem \ref{MAIN}.}\hspace{4pt}
First note that $(\phi,\mu,\mu_\Gamma)$ has the desired regularity according to Lemma \ref{LEM:CONV}. This means that item (i) of Definition \ref{DEF:WS} is already satisfied. To verify (ii), we need to pass the limit in the discretized system \eqref{EQ:WSD1}-\eqref{EQ:WSD3} using the convergence results that were established in Lemma \ref{LEM:CONV}. First, the equations \eqref{EQ:BPHI} and \eqref{EQ:BPHIG} imply that
\begin{align*}
\int\limits_{\Omega_T} (\bar\phi_N - \phi_0)\delt\zeta \dxt 
= \int\limits_{\Omega_T} \grad\mu_N\cdot \grad\zeta \dxt 
\end{align*}
for all $\zeta\in L^2\big(0,T;H^1(\Omega)\big)$ with $\delt\zeta\in L^2(\Omega_T)$ and $\zeta(T)=0$ and
\begin{align*}
\int\limits_{\Gamma_T} (\bar\phi_N - \phi_0)\delt\xi\dSx\dt
= \int\limits_{\Gamma_T} \grad_\Gamma \mu_{\Gamma,N}\cdot \grad\xi \dSx \dt
\end{align*}
for all $\xi\in L^2\big(0,T;H^1(\Gamma)\big)$ with $\delt\xi\in L^2(\Gamma_T)$ and $\xi(T)=0$. Using the convergence properties from Lemma~\ref{LEM:CONV}, we obtain the weak formulations \eqref{DEF:WF1} and \eqref{DEF:WF2} from Definition \ref{DEF:WS}. 
Now, let $\eta\in L^2\big(0,T;\V\big) \cap L^\infty(\Omega_T)$ be arbitrary. Testing \eqref{EQ:WSD3} with $\eta$ and integrating with respect to $t$ from $0$ to $T$ we obtain that
\begin{align*}
\int\limits_{\Omega_T} \grad\phi_N\cdot\grad\eta + F'(\phi_N)\,\eta\dxt 
+\int\limits_{\Gamma_T} \kappa \gradg \phi_N \cdot \gradg \eta + G'(\phi_N) \eta \dS \mathrm dt \notag \\
= \int\limits_{\Omega_T} \mu_N\,\eta\dxt + \int\limits_{\Gamma_T} \mu_{\Gamma,N}\,\eta \dS\mathrm dt.
\end{align*}
One can easily see that the terms that depend linearly on $\phi_N$,
$\mu_N$ or $\mu_{\Gamma,N}$ are converging to the corresponding terms
in \eqref{DEF:WF3} due to the convergence results that were
established in Lemma \ref{LEM:CONV}. Recall that
$G'(\phi_N)\in L^1(\Gamma_T)$ for all $N\in\N$ and
$G'(\phi_N) \to G'(\phi)$ as $N\to\infty$ almost everywhere in
$\Gamma_T$. Now, let $\varepsilon>0$ be arbitrary. For any $\alpha>0$,
let $\Gamma_\alpha$ denote a measurable subset of $\Gamma_T$ with
$|\Gamma_\alpha|<\alpha$. Using (P2) and the estimate
$\int_\Gamma G\big(\phi_N(t)\big)\mathrm dS \le E(\phi_0)+C^*$ we
obtain that
\begin{align*}
&\int\limits_{\Gamma_\alpha}|G'(\phi_N)|\dS\mathrm dt \\
&\quad\le \delta T \big(E(\phi_0)+C^*\big) + (A_G^\delta + B_G) |\Gamma_\alpha| + B_G\, T^{1/2}|\Gamma_\alpha|^{1/2}\|\phi_N\|_{L^\infty(0,T;L^2(\Gamma))}
\end{align*}
for any $\delta>0$ where $C^*$ is the constant from \eqref{EST:PHI}. Fixing $\delta:= \eps / \big[2T\big(E(\phi_0) + C^*\big)\big]$ and assuming that $\alpha$ is sufficiently small, we obtain that
\begin{align*}
\int\limits_{\Gamma_\alpha}|G'(\phi_N)|\dS\mathrm dt \le \frac \eps 2 + (A_G^\delta + B_G) \alpha + B_G\, T^{1/2}\alpha^{1/2}C_1 < \eps.
\end{align*}
Therefore we can apply Vitali's convergence theorem (cf. \cite[p.\,57]{alt}) which implies that $G'(\phi)\in L^1(\Gamma_T)$ with
$G'(\phi_N)\to G'(\phi)$ in $L^1(\Gamma_T)$ and thus
\begin{align*}
\int\limits_{\Gamma_T} G'(\phi_N) \eta \dS\mathrm dt \to \int\limits_{\Gamma_T} G'(\phi) \eta\dS\mathrm dt,\quad N\to\infty
\end{align*}
since $\eta\in L^\infty(\Gamma)$. The proof for $F'(\phi)\in L^1(\Omega_T)$ and $\int_\Omega F'(\phi_N) \eta \dx \to \int_\Omega F'(\phi) \eta \dx$ proceeds analogously and we can finally conclude that the triple $(\phi,\mu,\mu_\Gamma)$ satisfies the weak formulations \eqref{DEF:WF1}-\eqref{DEF:WF3} of Definition~\ref{DEF:WS}. This means that we have verified item (ii) of the definition.\\[1ex] 
Let now $t\in [0,T]$ be arbitrary. Then we have $\phi_N(t)\to \phi(t)$ almost everywhere on $\Omega$ and $\phi_N\vert_\Gamma(t)\to \phi\vert_\Gamma(t)$ almost everywhere on $\Gamma$. Moreover, recall that
\begin{align*}
|F_2(\phi_N)| \le C_F\big(1+|\phi_N|^2\big) \quad\text{in}\;\Omega
\tand
|G_2(\phi_N)| \le C_G\big(1+|\phi_N|^2\big) \quad\text{on}\;\Gamma
\end{align*}
due to assumption (P2.4). Hence Lebesgue's general convergence theorem (cf. \cite[p.\,60]{alt}) implies that
\begin{align*}
\int\limits_\Omega F_2\big(\phi_N(t)\big) \dx 
\to \int\limits_\Omega F_2\big(\phi(t)\big) \dx 
\tand
\int\limits_\Gamma G_2\big(\phi_N(t)\big) \dSx 
\to \int\limits_\Gamma G_2\big(\phi(t)\big) \dSx
\end{align*}
as $N\to\infty$. As the remaining terms of the energy $E$ are convex, we can conclude that
\begin{align*}
&E\big(\phi(t)\big) + \frac 1 2 \int\limits_0^t \|\grad\mu(s)\|^2_{L^2(\Omega)} 
+ \|\gradg\mu_{\Gamma}(s)\|^2_{L^2(\Gamma)} \ds \\
&\quad \le \underset{N\to\infty}{\lim\inf} \left\{ E\big(\phi_N(t)\big) + \frac 1 2 \int\limits_{t-\tau}^t \|\grad\mu_N(s)\|^2_{L^2(\Omega)} 
+ \|\gradg\mu_{\Gamma,N}(s)\|^2_{L^2(\Gamma)} \ds\right\} \\[1mm]
&\quad\le E(\phi_0).
\end{align*}
This verifies item (iii) of Definition \ref{DEF:WS} and completes the proof of Theorem \ref{MAIN}.\hfill $\Box$

%
%

\section{Uniqueness of the weak solution}

If the functions $F_2$ and $G_2$ that were introduced in (P2) are additionally Lipschitz continuous, we can even show that the weak solution predicted by Theorem \ref{MAIN} is unique:

\begin{theorem}
	\label{MAIN2}
	We assume that $\Omega\subset\R^d$ (with $d\in\{2,3\}$) is a bounded domain with Lipschitz boundary and that the potentials $F$ and $G$ satisfy the condition \textnormal{(P2)}. Let $T>0$ and $m=(m_1,m_2)\in \R^2$ be arbitrary and let $\phi_0 \in \Vm$ be any initial datum. Moreover, we assume that $F_2$ and $G_2$ are Lipschitz continuous.
	Then the weak solution $(\phi,\mu,\mu_\Gamma)$ that is predicted by Theorem \ref{MAIN} is the unique weak solution of the system \eqref{CHLW}.
\end{theorem}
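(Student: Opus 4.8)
The plan is to take two weak solutions $(\phi_1,\mu_1,\mu_{\Gamma,1})$ and $(\phi_2,\mu_2,\mu_{\Gamma,2})$ of \eqref{CHLW} sharing the initial datum $\phi_0$, to set $\phi:=\phi_1-\phi_2$, $\mu:=\mu_1-\mu_2$ and $\mu_\Gamma:=\mu_{\Gamma,1}-\mu_{\Gamma,2}$, and to run a Gronwall argument for $t\mapsto\|\phi(t)\|_\Va^2$. By Remark~\ref{RMK:WS}(a) both solutions carry the prescribed averages, so $\langle\phi(t)\rangle_\Omega=\langle\phi(t)\rangle_\Gamma=0$; hence $\phi(t)\in\Vo$ for a.e.\ $t$, the quantity $\|\phi(t)\|_\Va$ is well defined, and $\phi(0)=0$. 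The goal is the differential inequality $\ddt\|\phi(t)\|_\Va^2\le C\|\phi(t)\|_\Va^2$, from which $\phi\equiv 0$ then follows.

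First I would produce the dissipation identity. Subtracting the two copies of \eqref{DEF:WF1} and \eqref{DEF:WF2} shows $\delt\phi=\laplace\mu$ in $H^1(\Omega)^*$ and $\delt\phi\vert_\Gamma=\laplace_\Gamma\mu_\Gamma$ in $H^1(\Gamma)^*$ (cf.\ Remark~\ref{RMK:WS}(b)). Testing these with $\nmo\phi$ and $\nmg\phi\vert_\Gamma$ respectively, and using the standard chain rule for the $H^{-1}$-type norm, gives
\[
\tfrac12\ddt\|\phi\|_\Va^2 = -\int_\Omega\mu\,\phi\dx - \int_\Gamma\mu_\Gamma\,\phi\dS .
\]
The right-hand side is exactly the left-hand side of the difference of \eqref{DEF:WF3} tested with $\eta=\phi$. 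Since $\phi\in H^1(\Omega)$ need not be bounded, I would justify this choice by testing with the truncations $\eta=\max(-M,\min(M,\phi))$ and letting $M\to\infty$: the monotone parts $F_1,G_1$ contribute nonnegative terms that may simply be discarded (they carry the favourable sign), while the remaining terms pass to the limit by dominated and monotone convergence. This yields
\[
\tfrac12\ddt\|\phi\|_\Va^2 + \|\grad\phi\|_{L^2(\Omega)}^2 + \kappa\|\gradg\phi\|_{L^2(\Gamma)}^2 \le -\!\int_\Omega(F_2'(\phi_1)-F_2'(\phi_2))\phi\dx -\!\int_\Gamma(G_2'(\phi_1)-G_2'(\phi_2))\phi\dS .
\]

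The heart of the matter is to absorb the potential terms. Here I would use the Lipschitz assumption in the form $|F_2'(\phi_1)-F_2'(\phi_2)|\le L\,|\phi|$ and $|G_2'(\phi_1)-G_2'(\phi_2)|\le L\,|\phi|$, which bounds the right-hand side by $C\|\phi\|_{L^2(\Omega)}^2+C\|\phi\|_{L^2(\Gamma)}^2$. For the bulk term I would interpolate $\|\phi\|_{L^2(\Omega)}^2\le\delta\|\grad\phi\|_{L^2(\Omega)}^2+C_\delta\|\phi\|_\Va^2$ (valid for mean-free $\phi$) and absorb the gradient into the good term. The surface term is the genuine obstacle. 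For $\kappa>0$ it is handled like the bulk term, using Poincar\'e and interpolation on $\Gamma$ together with $\|\phi\vert_\Gamma\|_{H^1(\Gamma)^*}\le\|\phi\|_\Va$. For $\kappa=0$ there is no surface gradient to spend, so I would instead control the trace by the bulk gradient: pick $s\in\,]\tfrac12,1[\,$, use $\|\phi\vert_\Gamma\|_{L^2(\Gamma)}\le C\|\phi\|_{H^s(\Omega)}$ and the interpolation $\|\phi\|_{H^s(\Omega)}\le C\|\phi\|_{H^1(\Omega)}^\theta\|\phi\|_{H^1(\Omega)^*}^{1-\theta}$, so that after Young's inequality $\|\phi\vert_\Gamma\|_{L^2(\Gamma)}^2\le\delta\|\grad\phi\|_{L^2(\Omega)}^2+C_\delta\|\phi\|_\Va^2$ again. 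I expect this trace--interpolation estimate in the case $\kappa=0$, together with the justification of $\eta=\phi$ as a test function, to be the main technical difficulty.

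After absorbing all gradient contributions I arrive at $\ddt\|\phi(t)\|_\Va^2\le C\|\phi(t)\|_\Va^2$. Since $\phi(0)=0$, Gronwall's lemma forces $\|\phi(t)\|_\Va=0$ for all $t$, and because $\|\cdot\|_\Va$ is a norm on $\Vo$ this gives $\phi_1=\phi_2$. Finally, with $\phi_1=\phi_2$ the difference of \eqref{DEF:WF1} yields $\grad\mu\equiv 0$, so $\mu$ is spatially constant and likewise $\mu_\Gamma$; substituting back into the difference of \eqref{DEF:WF3} and testing first with $\eta\in C_c^\infty(\Omega)$ and then with general $\eta$ pins both constants to zero, giving $\mu_1=\mu_2$ and $\mu_{\Gamma,1}=\mu_{\Gamma,2}$ and hence uniqueness.
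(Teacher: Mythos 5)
Your proposal is correct and follows essentially the same strategy as the paper's proof: both run a Gronwall argument on $\|\phi_1-\phi_2\|_\Va^2$, derive the dissipation identity from the weak formulations, justify the test function $\eta=\bar\phi$ via the truncations $\mathcal P_M$ (discarding the monotone parts $F_1',G_1'$ by their sign), use the Lipschitz bounds on $F_2',G_2'$, absorb the trace term by interpolation against bulk quantities, and finally recover $\mu$ and $\mu_\Gamma$ from \eqref{DEF:WF3}. The only differences are implementational: the paper obtains the identity $\tfrac 1 2 \|\bar\phi(t_0)\|_\Va^2=-\int_{\Omega_{t_0}}\bar\phi\,\bar\mu\dxt-\int_{\Gamma_{t_0}}\bar\phi\,\bar\mu_\Gamma\dS\dt$ by explicit antiderivative-in-time test functions rather than an abstract $H^{-1}$-type chain rule, and it treats the boundary term for all $\kappa\ge 0$ uniformly via the trace interpolation inequality $\|\bar\phi\|_{L^2(\Gamma)}^2\le c\|\bar\phi\|_{L^2(\Omega)}^2+c\|\bar\phi\|_{L^2(\Omega)}\|\bar\phi\|_{H^1(\Omega)}$ instead of your $H^s(\Omega)$-trace plus $[H^1(\Omega)^*,H^1(\Omega)]$-interpolation route reserved for $\kappa=0$.
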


\begin{proof}
	Let $(\phi,\mu,\mu_\Gamma)$ denote the solution that is given by Theorem \ref{MAIN}. We assume that there is another weak solution $(\tilde\phi,\tilde\mu,\tilde\mu_\Gamma)$ of \eqref{CHLW} in the sense of Definition \ref{DEF:WS} and we consider the difference
	\begin{align*}
	(\bar\phi,\bar\mu,\bar\mu_{\Gamma}):=(\phi - \tilde\phi,\,\mu - \tilde\mu,\,\mu_\Gamma - \tilde\mu_\Gamma).
	\end{align*}
	Due to \eqref{DEF:WF1} it holds that
	\begin{align*}
	\int\limits_{\Omega_T} \bar\phi\, \delt\zeta \dxt =	\int\limits_{\Omega_T} \grad\bar\mu \cdot \grad\zeta \dxt
	\end{align*}
	for all $\zeta\in L^2\big(0,T;H^1(\Omega)\big)$ with $\delt\zeta\in L^2(\Omega_T)$ and $\zeta(T)=0$. For any $t_0\in [0,T]$ and any  $\eta\in L^2\big(0,T;\V\big)\cap L^\infty(\Omega_T)$ with $\eta\vert_\Gamma \in L^\infty(\Gamma_T)$,  we define
	\begin{align*}
	\zeta(\cdot,t):=
	\begin{cases}
	\int_t^{t_0} \eta(\cdot,s) \ds &\text{if}\; t\le t_0,\\
	0 &\text{if}\; t>t_0.
	\end{cases}
	\end{align*}
	Then, since $\zeta$ is a suitable test function, we have
	\begin{align*}
	-\int\limits_{\Omega_{t_0}} \bar\phi\, \eta \dxt 
	= \int\limits_{\Omega_{t_0}} \grad\bar\mu \cdot \grad\left(\int\limits_t^{t_0} \eta\ds\right) \dxt 
	= \int\limits_{\Omega_{t_0}} \grad \left(\int\limits_0^t \bar\mu \ds\right) \cdot \grad \eta \dxt.
	\end{align*}
	This means that 
	\begin{align*}
	\nmo\bar\phi = -\int\limits_0^t \bar\mu \ds + c \quad\tand\quad \delt\nmo\bar\phi = -\bar\mu.
	\end{align*}
	for some constant $c\in\R$. Now, choosing $\eta=\bar\mu$ yields
	\begin{align*}
	\int\limits_{\Omega_{t_0}} \bar\phi\, \bar\mu \dxt 
	&= -\int\limits_{\Omega_{t_0}} \grad \nmo\bar\phi \cdot \grad \delt\nmo\bar\phi \dxt\\
	&= -\frac 1 2 \int\limits_{\Omega_{t_0}} \ddt \Big( \grad \nmo\bar\phi \cdot \grad \nmo\bar\phi \Big) \dxt \\
	&= -\frac 1 2 \int\limits_{\Omega}  \grad \nmo\bar\phi(t_0) \cdot \grad \nmo\bar\phi(t_0)  \dx
	\end{align*}
	since $\bar\phi(0)=0$ and thus $\nmo\bar\phi(0)=0$. In a similar fashion, we can conclude that
	\begin{align*}
	\int\limits_{\Gamma_{t_0}} \bar\phi\, \bar\mu_\Gamma \dxt = -\frac 1 2 \int\limits_{\Gamma}  \gradg \nmg\bar\phi(t_0) \cdot \gradg \nmg\bar\phi(t_0)  \dx.
	\end{align*}
	Summing up both equations gives
	\begin{align}
	\label{EQ:UN1}
	\frac 1 2 \|\bar\phi(t_0)\|_\Va^2 = -\int\limits_{\Omega_{t_0}} \bar\phi\,\bar\mu \dxt - \int\limits_{\Gamma_{t_0}} \bar\phi\,\bar\mu_\Gamma \dS\dt.
	\end{align}
	From the weak formulation \eqref{DEF:WF3} we obtain that
	\begin{align}
	\label{EQ:WFD}
	\int\limits_{\Omega_T} \bar\mu\,\eta\dxt + \int\limits_{\Gamma_T} \bar\mu_\Gamma\,\eta \dS\mathrm dt
	=\int\limits_{\Omega_T} \grad\bar\phi\cdot\grad\eta \dxt +\int\limits_{\Gamma_T} \kappa \gradg \bar\phi \cdot \gradg \eta \dS\mathrm dt\notag\\
	+ \int\limits_{\Omega_T} \big(F'(\phi) - F'(\tilde\phi)\big)\,\eta\dxt 
	+\int\limits_{\Gamma_T} \big(G'(\phi) - G'(\tilde\phi)\big) \eta \dS\mathrm dt
	\end{align}
	for all  $\eta\in L^2\big(0,T;\V\big)\cap L^\infty(\Omega_T)$ with $\eta\vert_\Gamma \in L^\infty(\Gamma_T)$.  Now, for any $M>0$, we choose the test function 
	\begin{align*}
	\eta:=\chi_{[0,t_0]} \mathcal P_M(\bar\phi)
	\end{align*}
	where $\mathcal P_M$ describes a projection of $\R$ onto the interval $[-M,M]$ given by
	\begin{align*}
	\mathcal P_M:\R\to\R,\quad s\mapsto
	\begin{cases}
	s &\text{if}\; |s|<M\\
	\frac{s}{|s|}M &\text{if}\; |s|\ge M\\
	\end{cases},
	\qquad M>0.
	\end{align*}
	Recall that, according to (P2), $F=F_1+F_2$ and $G=G_1+G_2$ where $F_1$ and $G_1$ are convex. Hence, their derivatives $F'$ and $G'$ are monotonically increasing. In particular,
	\begin{align*}
	\big(F_1'(\phi)-F_1'(\tilde\phi)\big) \mathcal P_M(\bar\phi) &\ge 0 \quad \text{a.e. on}\;\Omega_T,\\
	\big(G_1'(\phi)-G_1'(\tilde\phi)\big) \mathcal P_M(\bar\phi) &\ge 0 \quad \text{a.e. on}\;\Gamma_T.
	\end{align*}
	Plugging these estimates into \eqref{EQ:WFD} with $\eta=\chi_{[0,t_0]} \mathcal P_M(\bar\phi)$ gives
	\begin{align}
	\label{EQ:WFD2}
	&\int\limits_{\Omega_{t_0}} \bar\mu\,\mathcal P_M(\bar\phi) \dxt 
	+ \int\limits_{\Gamma_{t_0}} \bar\mu_\Gamma\,\mathcal P_M(\bar\phi) \dS\mathrm dt \notag	\\
	&\quad \ge \int\limits_{\Omega_{t_0}} \grad\bar\phi\cdot\grad\mathcal P_M(\bar\phi) \dxt
	+ \int\limits_{\Gamma_{t_0}} \kappa \gradg \bar\phi \cdot \gradg \mathcal P_M(\bar\phi) \dS\mathrm dt \notag
	\\
	&\qquad + \int\limits_{\Omega_{t_0}} \big(F_2'(\phi) - F_2'(\tilde\phi)\big)\,\mathcal P_M(\bar\phi)\dxt + \int\limits_{\Gamma_{t_0}} \big(G_2'(\phi) - G_2'(\tilde\phi)\big) \mathcal P_M(\bar\phi) \dS\mathrm dt.
	\end{align}
	In the limit $M\to\infty$ we obtain \eqref{EQ:WFD2} with $\chi_{[0,t_0]} \mathcal P_M(\bar\phi)$ replaced by $\bar\phi$. Together with equation \eqref{EQ:UN1} this yields
	\begin{align}
	\label{EQ:UN3}
	&\frac 1 2 \|\bar\phi(t_0)\|_\Va^2 + \|\grad\bar\phi\|_{L^2(\Omega_{t_0})}^2 + \kappa \|\gradg\bar\phi\|_{L^2(\Gamma_{t_0})}^2 \notag\\
	&\quad \le - \int\limits_{\Omega_{t_0}} \big(F'(\phi) - F'(\tilde\phi)\big)\,\bar\phi\dxt - \int\limits_{\Gamma_{t_0}} \big(G'(\phi) - G'(\tilde\phi)\big) \bar\phi \dS\mathrm dt \notag\\
	&\quad \le L_F \|\bar\phi\|_{L^2(\Omega_{t_0})}^2 + L_G \|\bar\phi\|_{L^2(\Gamma_{t_0})}^2
	\end{align}
	where $L_F,L_G\ge 0$ are the Lipschitz costants of $F_2'$ and $G_2'$. Using integration by parts and Young's inequality with $0<\delta\le 1$ we obtain that
	\begin{align}
	\label{EST:BPHI}
	\|\bar\phi\|_{L^2(\Omega)}^2 = \int\limits_\Omega \grad\nmo\bar\phi\cdot\grad\bar\phi\dx \le  \frac{1}{4\delta}\|\bar\phi\|_\Va^2 + \delta \|\grad\bar\phi\|_{L^2(\Omega)}^2.
	\end{align}
	We can bound $\|\bar\phi\|_{L^2(\Gamma)}^2$ using an interpolation inequality (see \cite[Thm.\,II.4.1]{galdi}). In combination with Poincar\'e's inequality and Young's inequality we infer that
	\begin{align*}
	\|\bar\phi\|_{L^2(\Gamma)}^2 
	\le c\|\bar\phi\|_{L^2(\Omega)}^2 + c\|\bar\phi\|_{L^2(\Omega)}\|\bar\phi\|_{H^1(\Omega)} 
	\le \frac{c}{\delta}\|\bar\phi\|_{L^2(\Omega)}^2 + c\delta\|\grad\bar\phi\|_{L^2(\Omega)}^2
	\end{align*}
	for any $0<\delta\le 1$ and some generic constant $c \ge 0$ independent of $\delta$. Now, we apply \eqref{EST:BPHI} with $\delta^2$ instead of $\delta$ to obtain that
	\begin{align*}
	\|\bar\phi\|_{L^2(\Gamma)}^2 
	\le  \frac{\bar c}{\delta^3}\|\bar\phi\|_\Va^2 + \bar c \delta \|\grad\bar\phi\|_{L^2(\Omega)}^2  
	\end{align*}
	for any $0<\delta\le 1$ and a constant $\bar c\ge 0$ independent of $\delta$. Then, inequality \eqref{EQ:UN3} implies that
	\begin{align*}
	&\frac 1 2 \|\bar\phi\|_\Va^2 + \big(1-(L_F+\bar c L_G)\delta \big)\|\grad\bar\phi\|_{L^2(\Omega_{t_0})}^2 + \kappa \|\gradg\bar\phi\|_{L^2(\Gamma_{t_0})}^2 \\
	&\quad\le \frac{\frac 1 4 L_F + \bar c L_G}{\delta^3} \int\limits_0^{t_0}\|\bar\phi\|_\Va^2 \dt.
	\end{align*}
	Now, choosing $\delta:=1/(2L_F+2\bar c L_G)$ yields
	\begin{align*}
	\|\bar\phi(t_0)\|_\Va^2 
	\le C \int\limits_0^{t_0} \|\bar\phi(t)\|_\Va^2
	\end{align*}
	for some constant $C>0$ that depends only on $L_F$, $L_G$ and $\bar c$. 
	Now, as $t_0\in[0,T]$ was arbitrary, Gronwall's lemma implies that $$\|\bar\phi(t)\|_\Va^2 = 0$$ for almost all $t\in[0,T]$. Thus, $\phi=\tilde\phi$ almost everywhere on $\Omega_T$ and $\phi\vert_\Gamma=\tilde\phi\vert_\Gamma$ almost everywhere on $\Gamma_T$. Recall that both $(\phi,\mu,\mu_\Gamma)$ and $(\tilde\phi,\tilde\mu,\tilde\mu_\Gamma)$ satisfy the weak formulation \eqref{DEF:WF3}. Choosing $\eta\in C_c^\infty(\Omega)$ we obtain that
	\begin{align}
	\label{EQ:UN4}
	\int\limits_{\Omega_T} (\mu -  \tilde \mu) \eta \dx 
	= \int\limits_{\Omega_T} \grad\phi\cdot\grad\eta + F'(\phi)\eta \dx - \int\limits_{\Omega_T} \grad\tilde\phi\cdot\grad\eta + F'(\tilde\phi)\eta \dx = 0,
	\end{align}
	since $\phi=\tilde\phi$ almost everywhere in $\Omega_T$. Thus, $\mu=\tilde\mu$ almost everywhere in $\Omega$ directly follows. Testing \eqref{DEF:WF3} with $\eta\in C^\infty(\Omega)$ and using \eqref{EQ:UN4} gives
	\begin{align}
	\begin{aligned}
	&\int\limits_{\Gamma_T} \mu_\Gamma\, \eta \dS - \int\limits_{\Gamma_T} \tilde \mu_\Gamma\, \eta \dS \notag \\
	&\quad= \int\limits_{\Gamma_T} \kappa \gradg\phi\cdot\gradg\eta + G'(\phi)\eta \dx - \int\limits_{\Gamma_T} \kappa \gradg\tilde\phi\cdot\gradg\eta + G'(\tilde\phi)\eta \dx = 0.
	\end{aligned}
	\end{align}
	This implies that $\mu_\Gamma = \tilde\mu_\Gamma$ almost everywhere on $\Gamma$ and completes the proof of Theorem~\ref{MAIN2}.
\end{proof}

\begin{remark}
	Since the unique weak solution from Theorem \ref{MAIN2} exists on the interval $[0,T]$ for any given $T>0$, it can be considered as the unique weak solution on $[0,\infty[$. This means that this solution is global in time.
\end{remark}

\section{Numerical results}

In this section we present several plots of two numerical simulations (see \autoref{fig1} and \autoref{fig2}) for the system \eqref{CHLW:EPS} in two dimensions with $\eps=0.02$. These numerical solutions were computed by a finite element method implemented by D. Trautwein \cite{trautwein} using MATLAB. In both simulations, the domain $\Omega$ is a square that is discretized by a standard Friedrichs-Keller triangulation with step size $h$. Therefore we write $\Omega_h$ and $\Gamma_h$ to denote the discretizations of $\Omega$ and $\Gamma$. The time evolution is computed by an implicit Euler method with a step size of $\eps^3=8\cdot 10^{-6}$. \\[1ex]
In the first simulation, the parameter $\kappa$ is set to $0.02$ and the domain $\Omega$ is the unit square. The spatial step size for the Friedrichs-Keller triangulation is $h=0.01$ which leads to a grid of $101\times 101$ sampling points. \autoref{fig1} visualizes the time evolution of an initial datum $\phi_0$ that is set to zero in every node of $\Omega_h$ but set to one in every node of $\Gamma_h$. The plots show the solution after $5$, $15$, $80$, $200$, $500$ and $2000$ time steps. Note that, as the mass on the boundary is conserved, the solution will remain identically one on the boundary for all time. The phase separation in the bulk leads to a wave-like structure and eventually, as a long-time effect, the pure phase associated with the value $-1$ forms a single circle around the center of the square $\Omega$.\\[1ex]
In the second simulation, we use the parameter $\kappa=0.075$ and the domain $\Omega=]0,0.5[^2$. The spatial step size is $h=0.005$ and therefore the grid also consists of $101\times 101$ sampling points. Now, the initial datum $\phi_0$ attains random values between $-0.1$ and $0.1$ in every grid point of $\Omega_h$ and also random values between $0.4$ and $0.6$ in every node of $\Gamma_h$. \autoref{fig2} shows the corresponding solution after $5$, $15$, $50$, $125$, $300$ and $2500$ time steps. 

\begin{figure}[h!]
	\centering
	\includegraphics[width=0.32\textwidth]{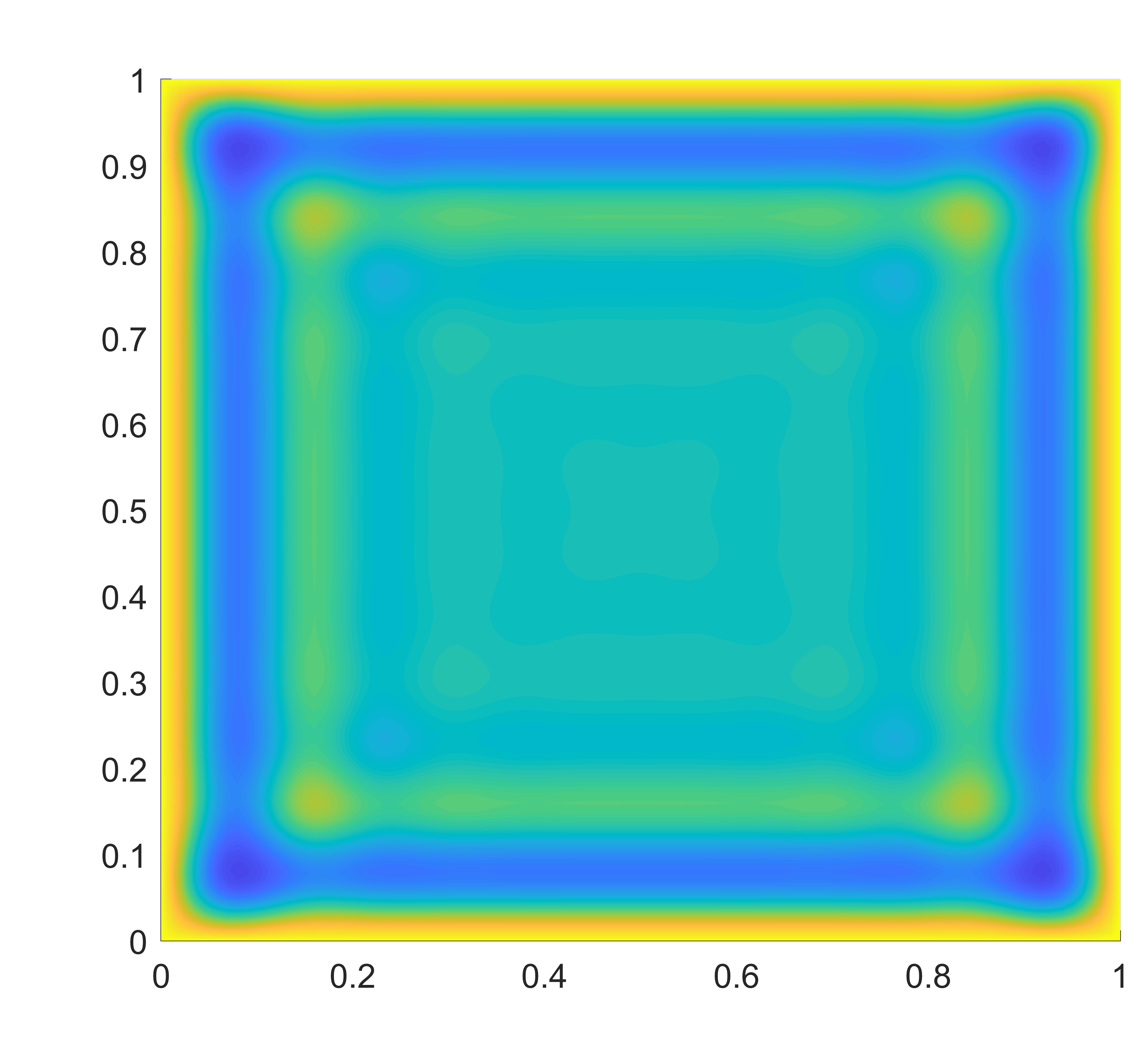}
	\includegraphics[width=0.32\textwidth]{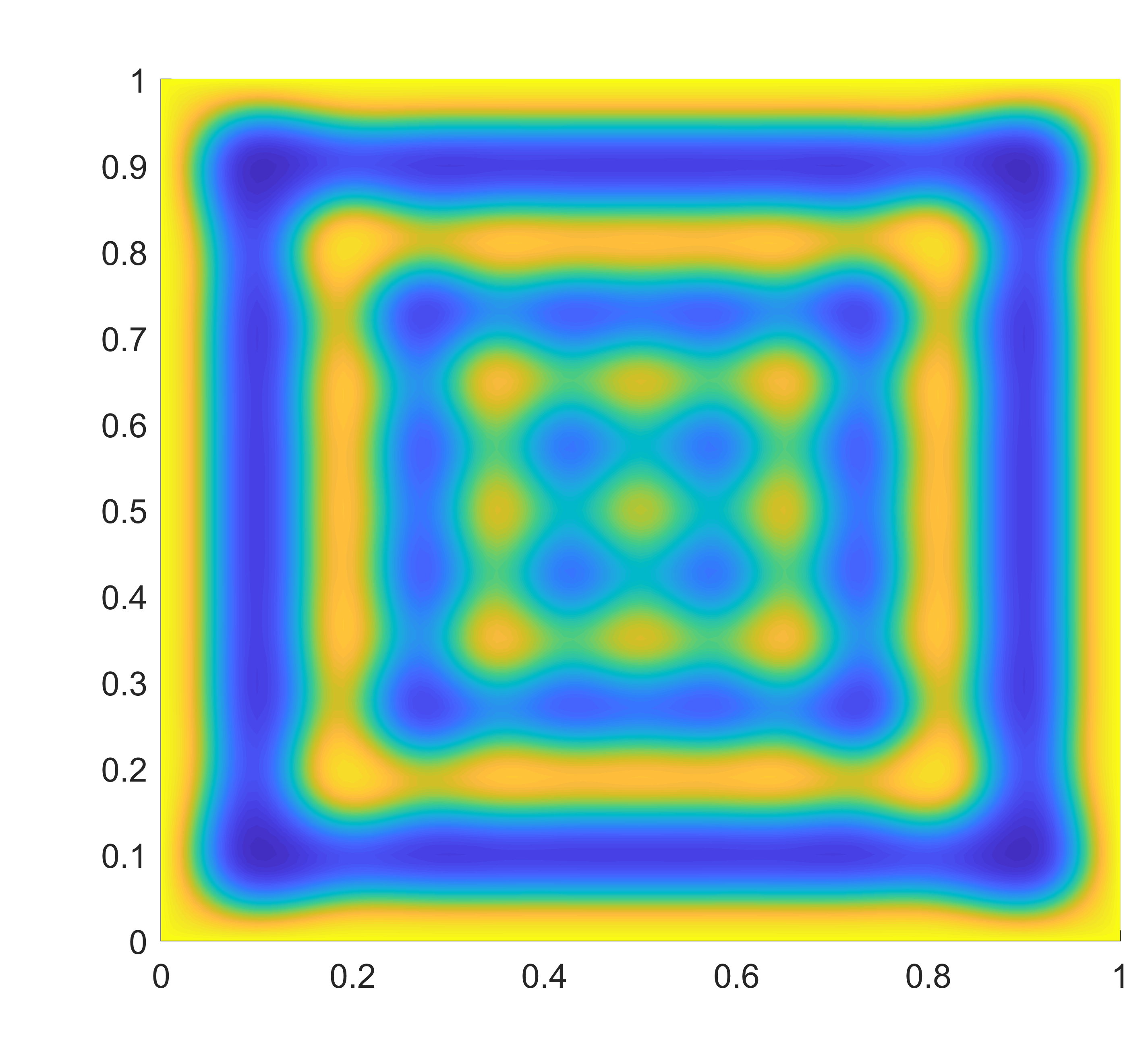}
	\includegraphics[width=0.32\textwidth]{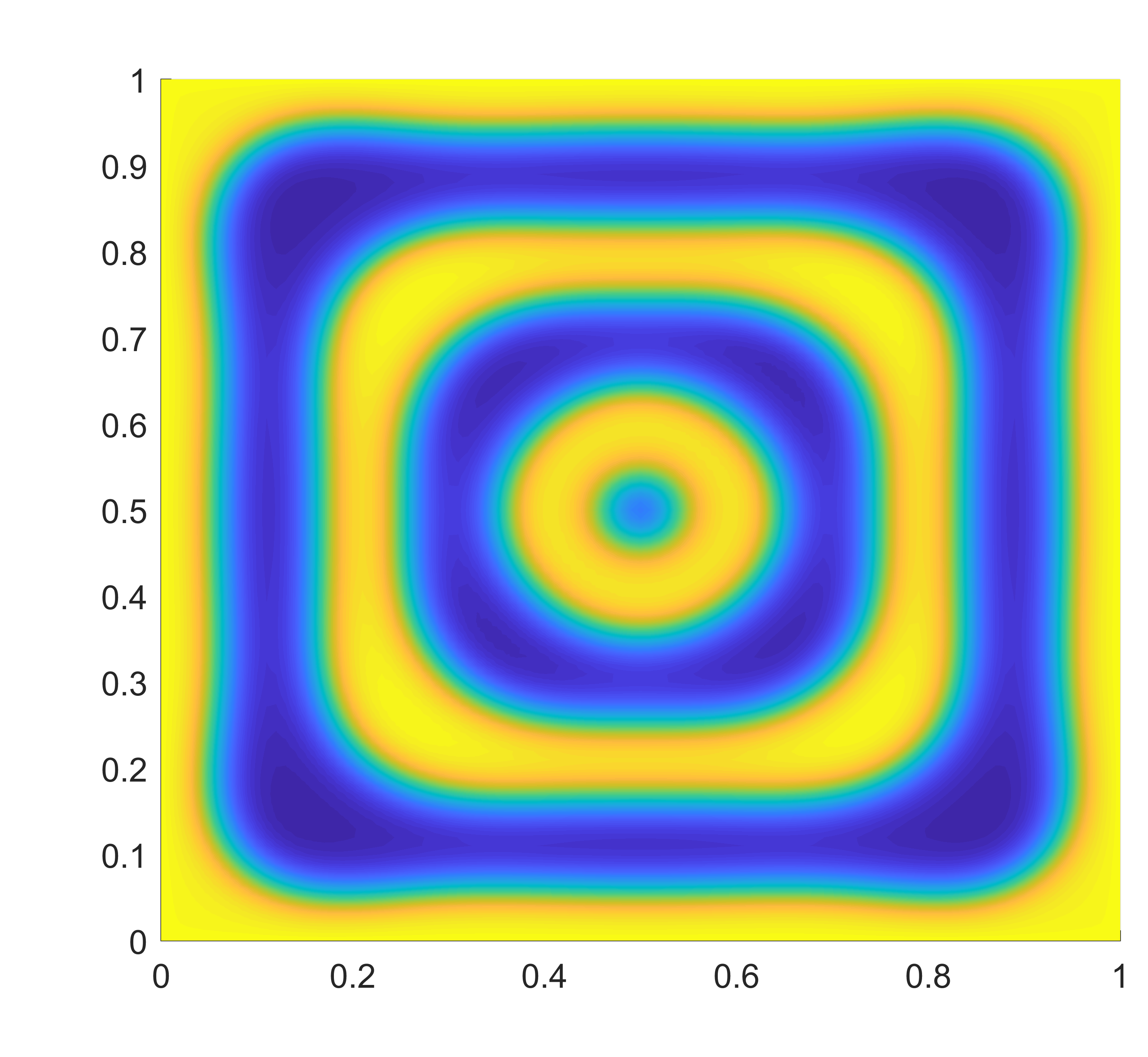}
	\\
	\includegraphics[width=0.32\textwidth]{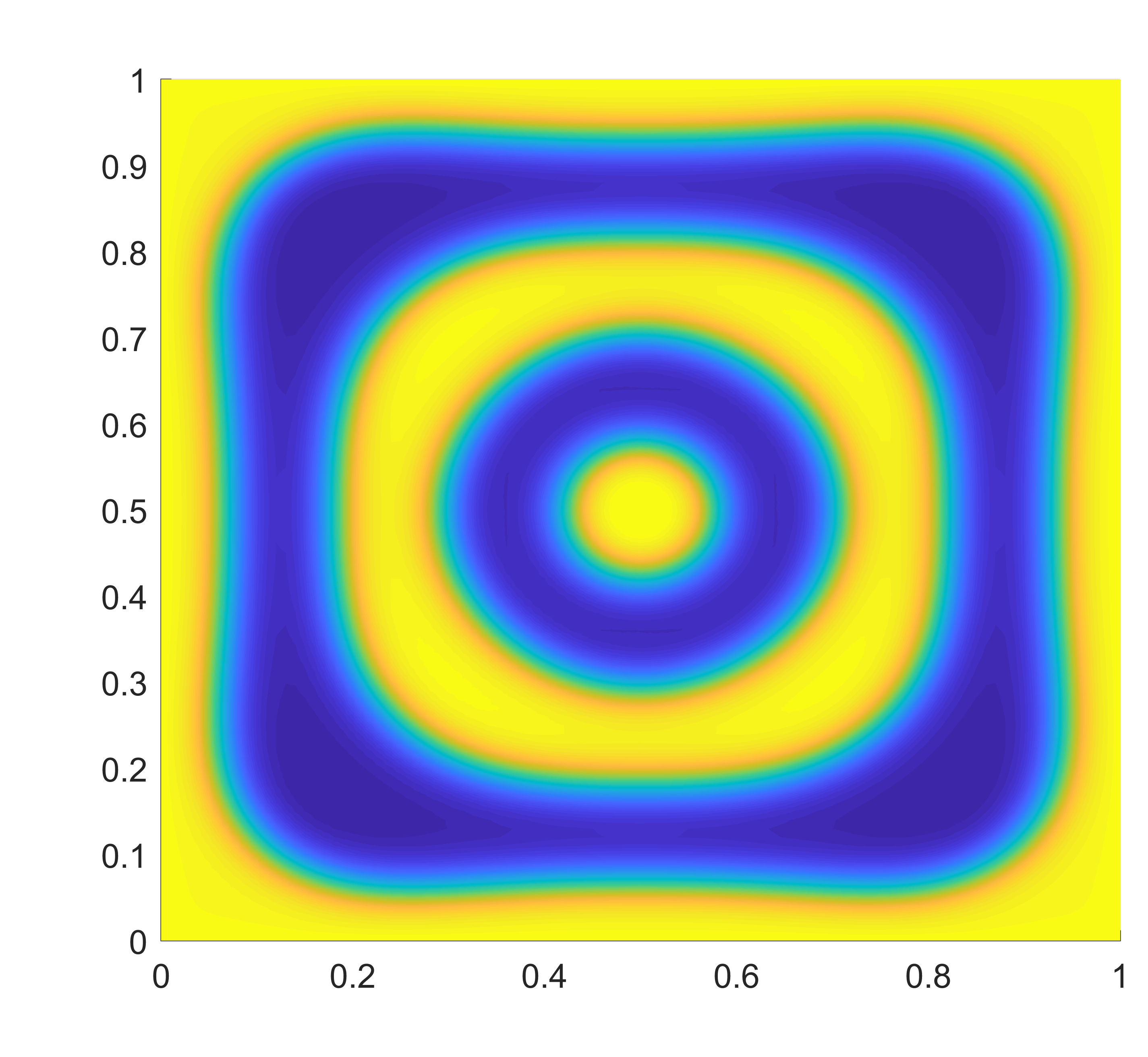}
	\includegraphics[width=0.32\textwidth]{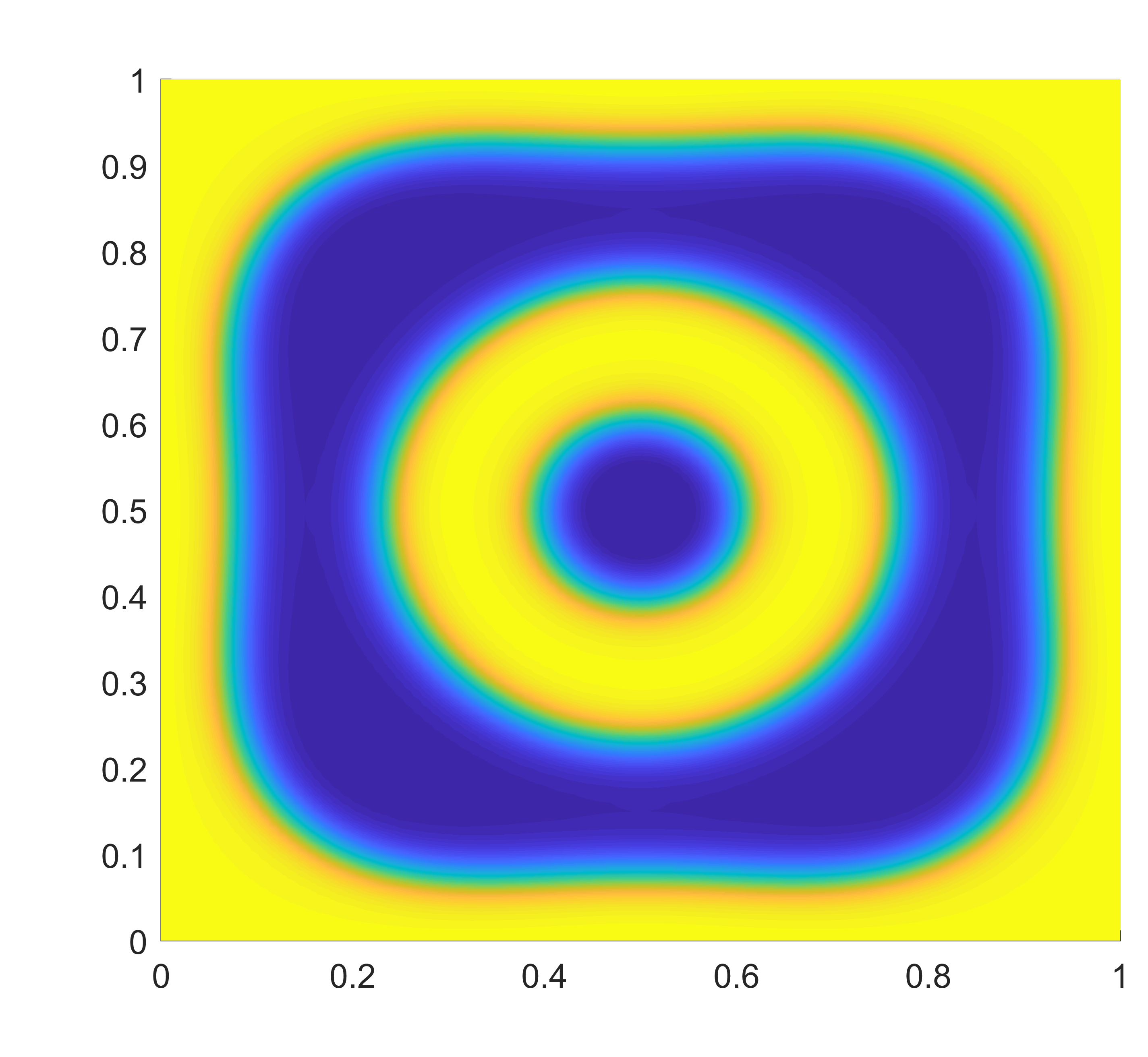}
	\includegraphics[width=0.32\textwidth]{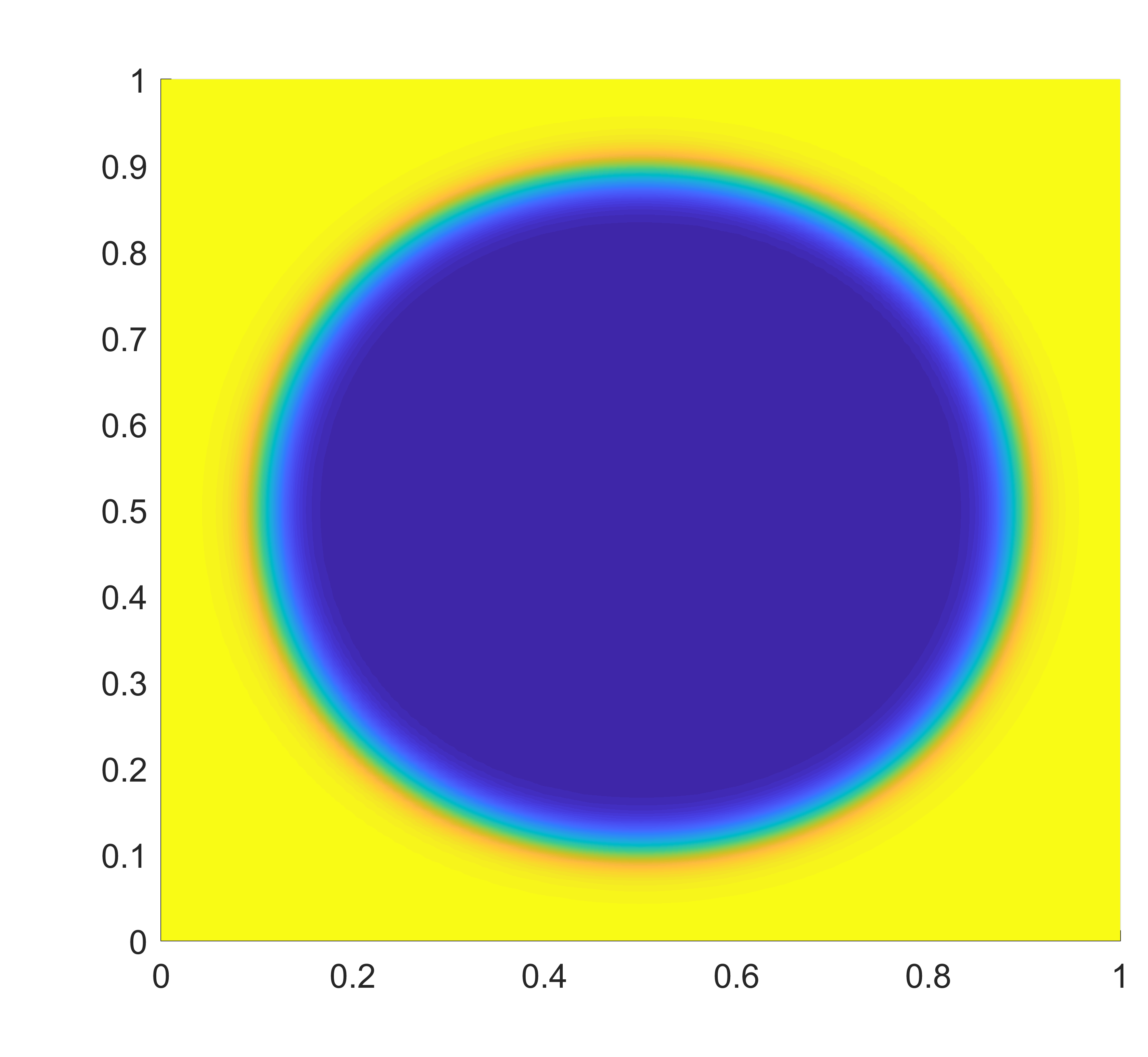}
	\\
	\includegraphics[width=50mm]{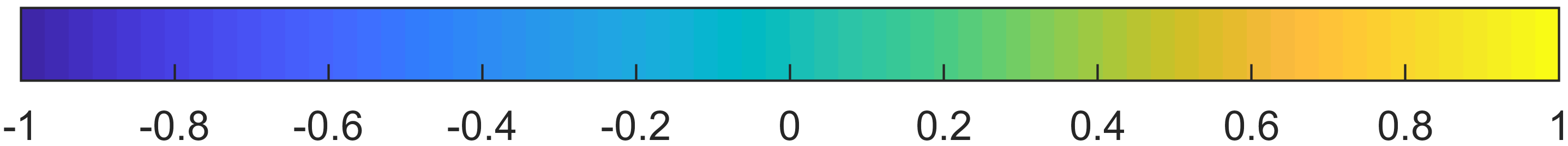}
	\caption{The initial datum $\phi_0$ is set to zero in $\Omega_h$ and set to one on $\Gamma_h$.}\label{fig1}
\end{figure}

\begin{figure}[h!]
	\centering
	\includegraphics[width=0.32\textwidth]{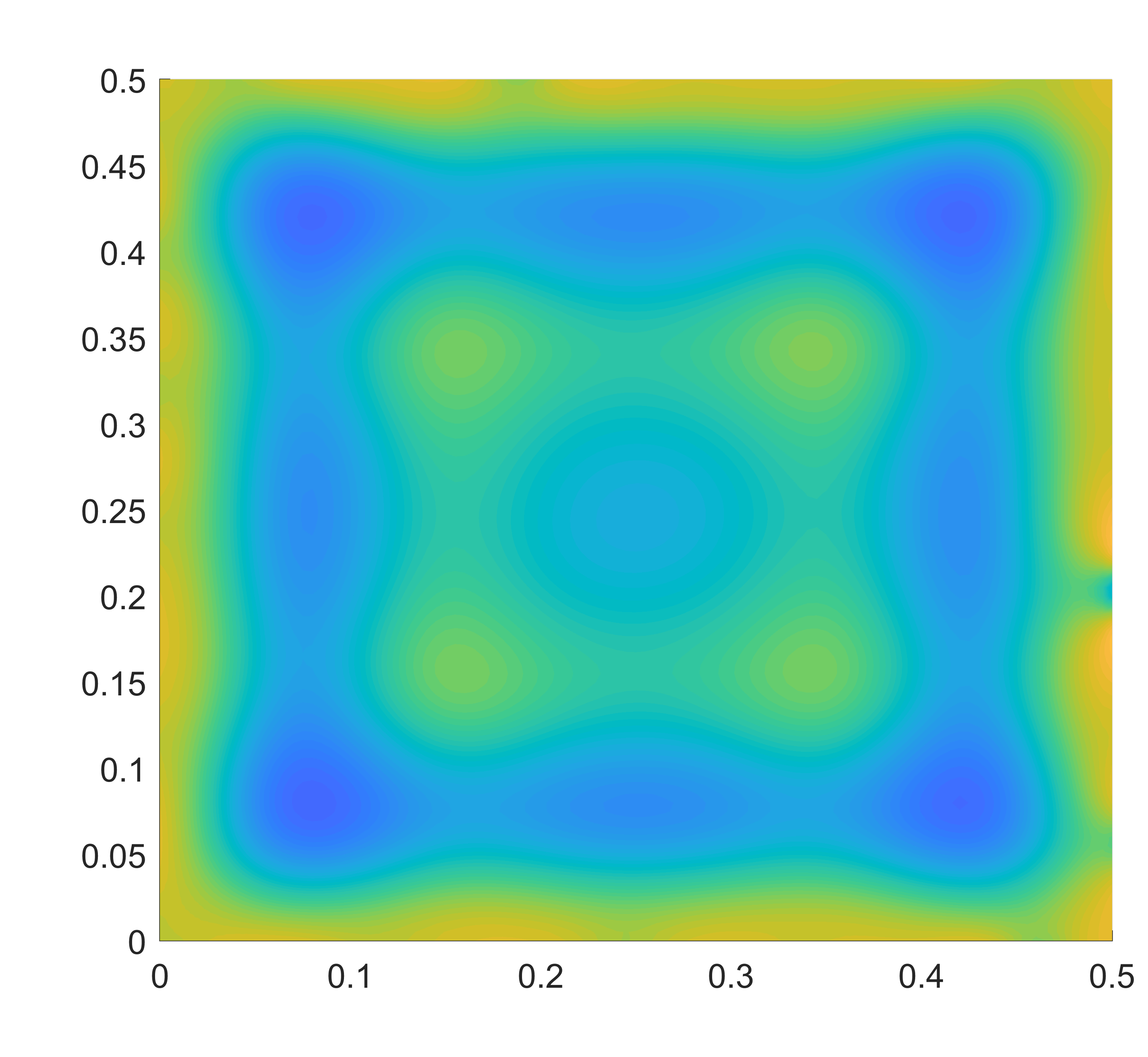}
	\includegraphics[width=0.32\textwidth]{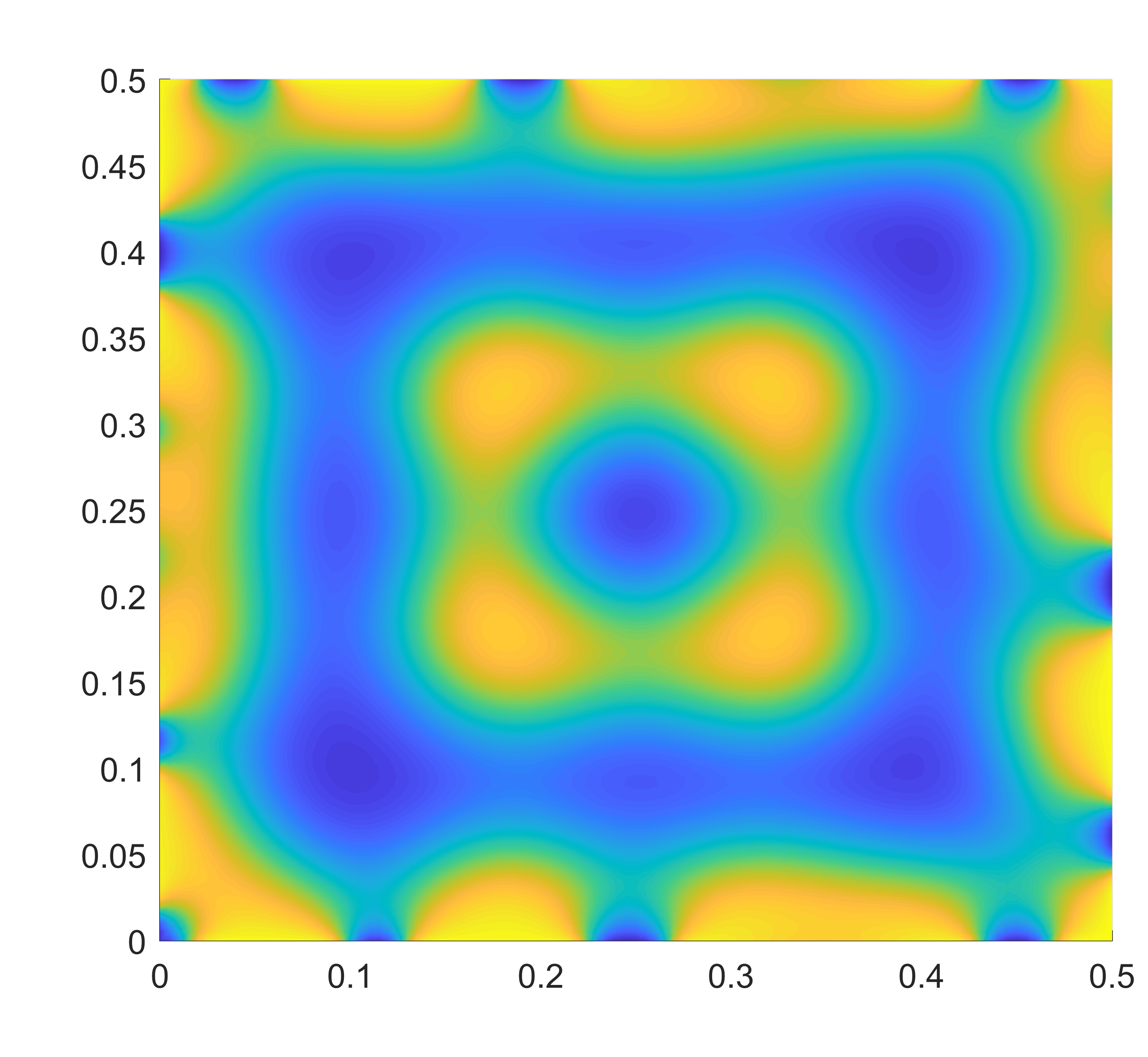}
	\includegraphics[width=0.32\textwidth]{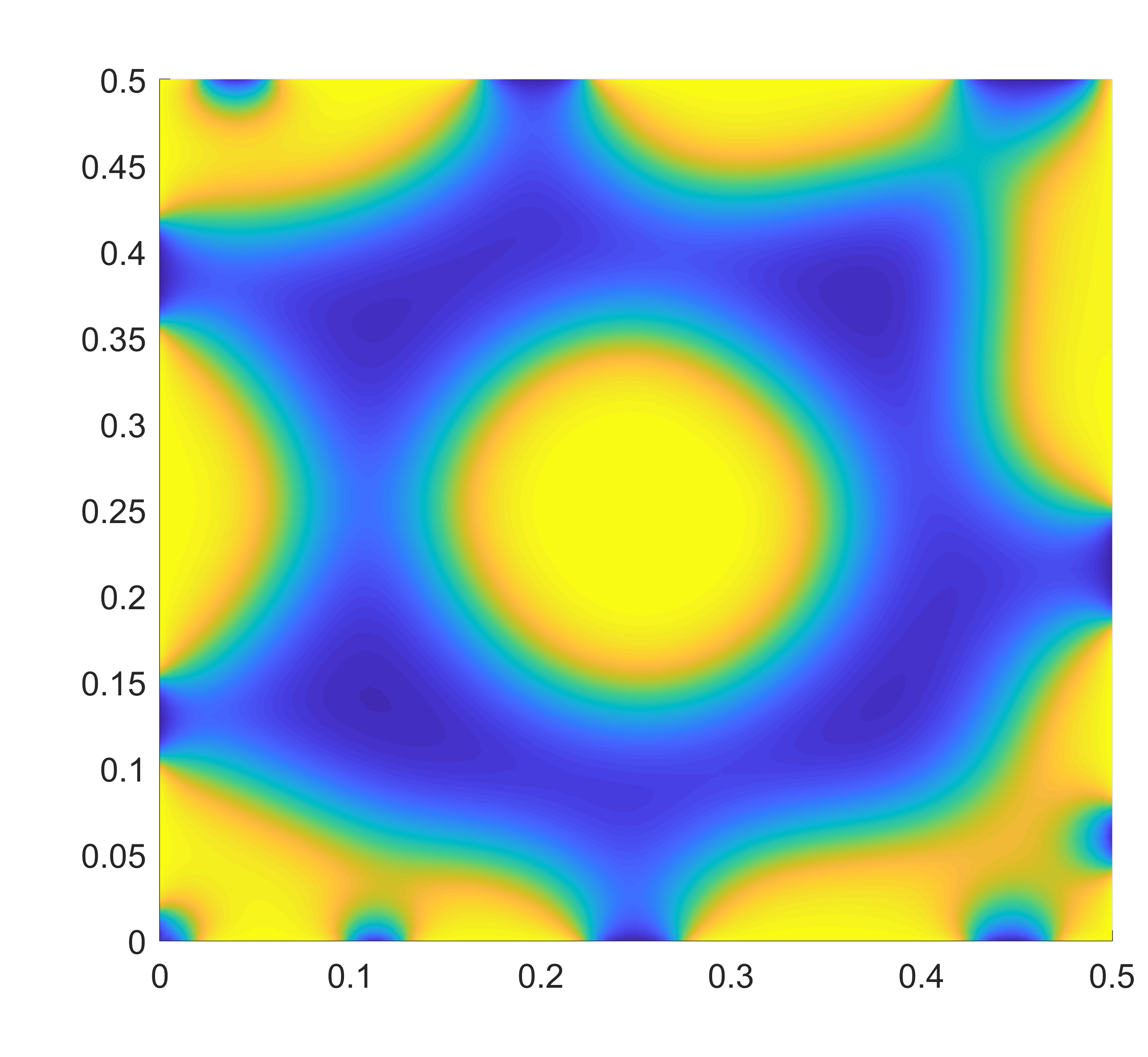}
	\\
	\includegraphics[width=0.32\textwidth]{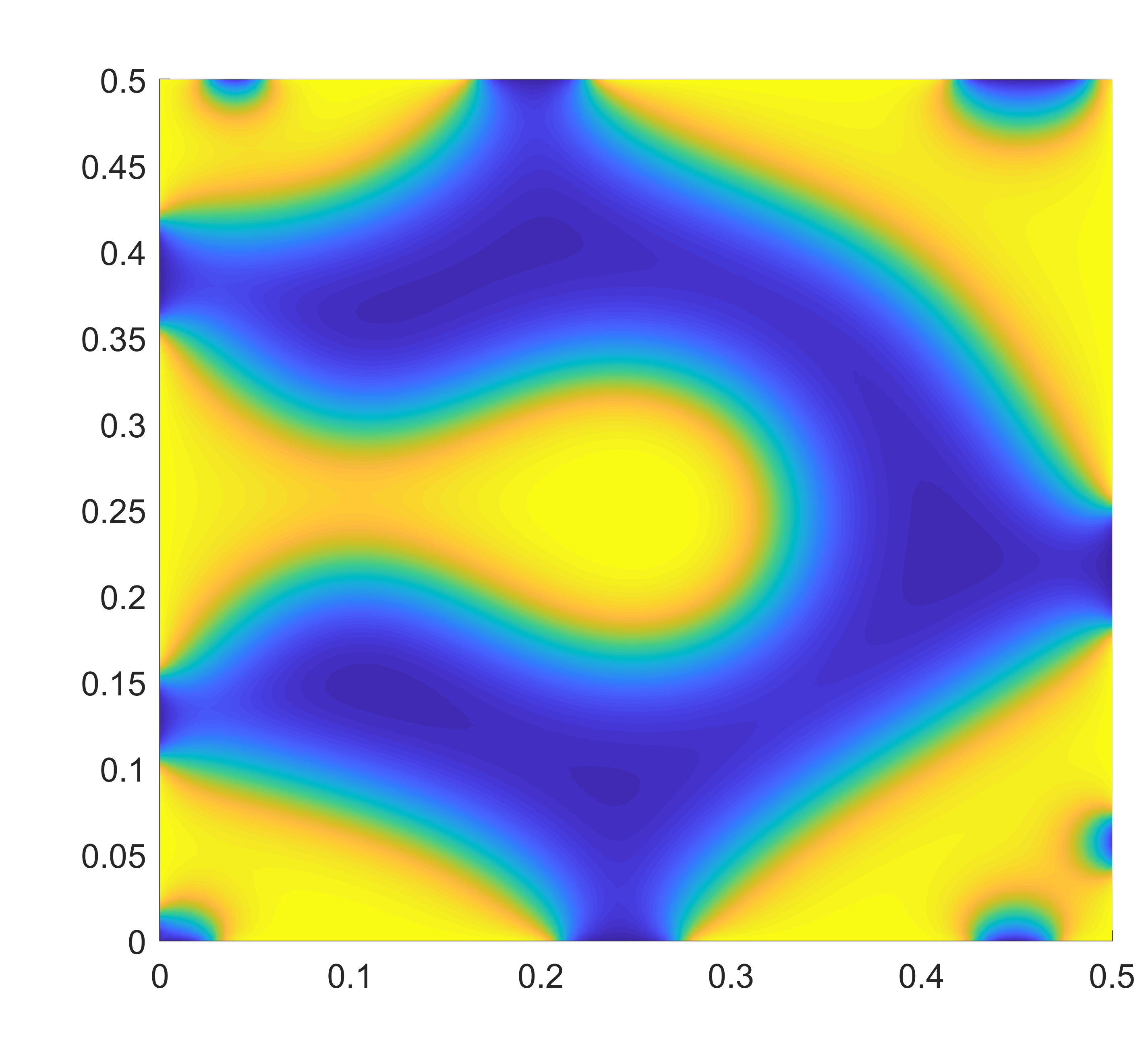}
	\includegraphics[width=0.32\textwidth]{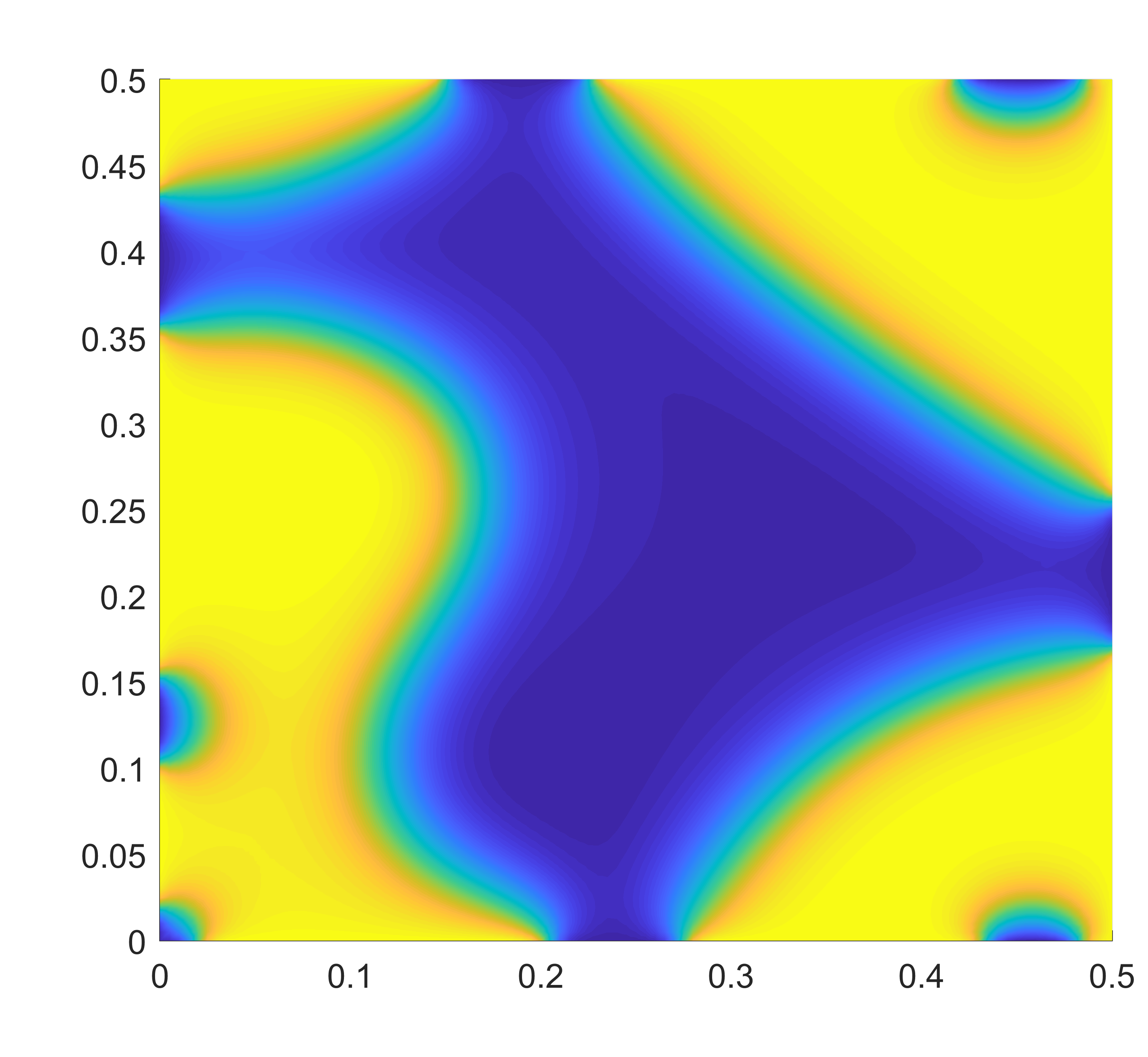}
	\includegraphics[width=0.32\textwidth]{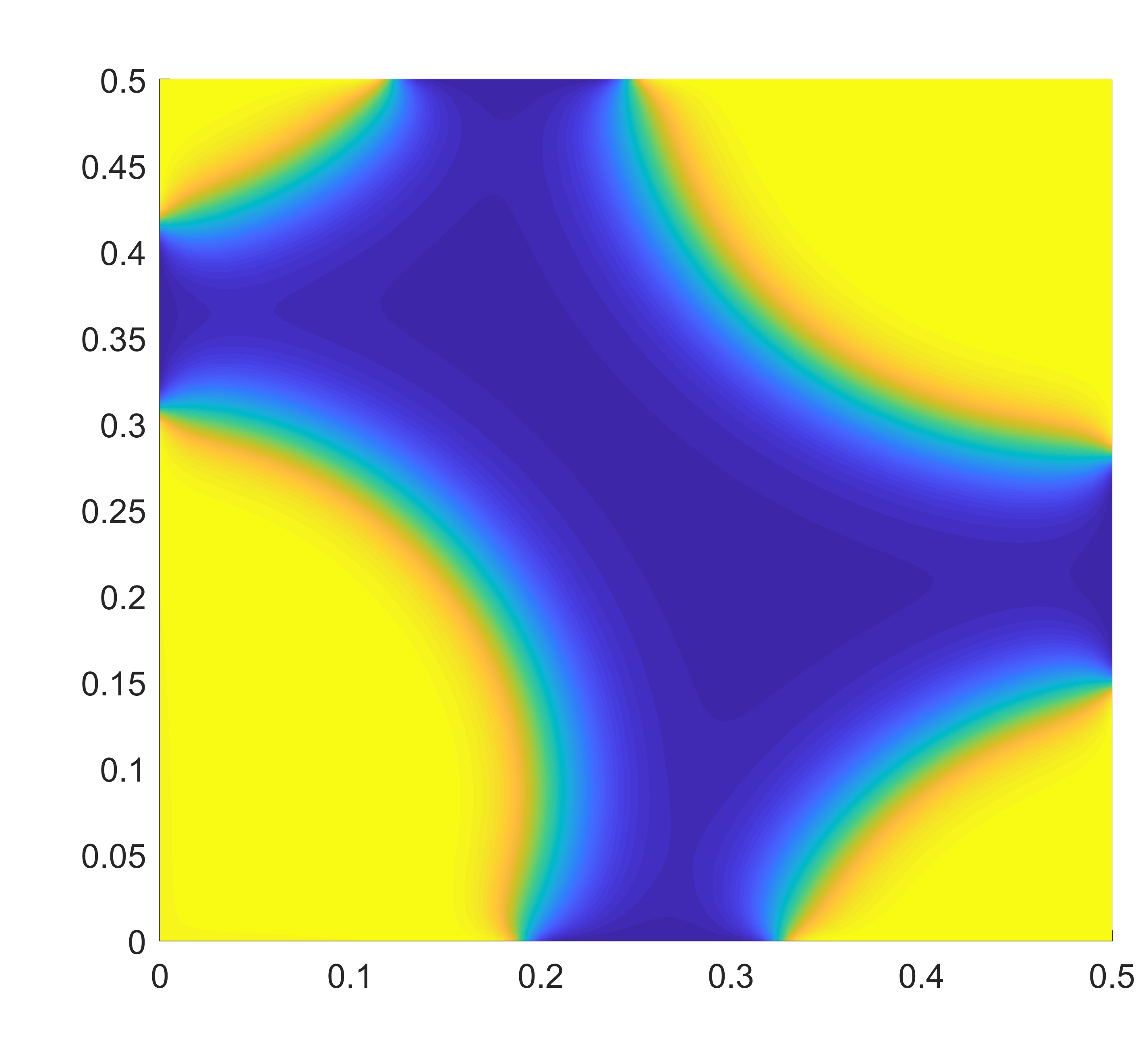}
	\\
	\includegraphics[width=50mm]{colorbar-hor}
	\caption{The initial datum $\phi_0$ attains random values between $-0.1$ and $0.1$ in $\Omega_h$ and random values between $0.4$ and $0.6$ on $\Gamma_h$. }\label{fig2}
\end{figure}

%
%

\FloatBarrier

\section*{Appendix}

For the reader's convenience, we state the generalized Poincar\'e inequality as presented by H.\,W. Alt \cite[p.\,242]{alt} because of its importance in the approach of Section 4.4:

\begin{lemma}[Generalized Poincar\'e inequality]
	\label{LEM:GPI}
	Let $\Omega\subset\mathbb{R}^n$ be open, bounded and connected with Lipschitz boundary $\del\Omega$. Moreover, let $1<p<\infty$ and let $\mathcal M\subset W^{1,p}(\Omega)$ be nonempty, closed and convex. Then the following items are equivalent for every $u_0$ in $\mathcal M$:
	\begin{enumerate}
		\itemi There exists a constant $C_0<\infty$ such that for all $\xi\in\R$,
		\begin{align*}
		u_0+\xi\in\mathcal M \Rightarrow |\xi|\le C_0.
		\end{align*}
		\itemii There exists a constant $C<\infty$ with 
		\begin{align*}
		\|u\|_{L^p(\Omega)} \le C\,\big( \|\grad u\|_{L^p(\Omega)} + 1 \big)\quad\text{for all}\; u\in\mathcal M.
		\end{align*}
	\end{enumerate}
\end{lemma}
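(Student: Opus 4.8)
The plan is to prove the two implications separately. Throughout, a real number $\xi$ is identified with the constant function $\xi\chi_\Omega \in W^{1,p}(\Omega)$, whose gradient vanishes, so both conditions are really statements about how the affine line $u_0 + \R\chi_\Omega$ meets $\mathcal M$. The implication (ii) $\Rightarrow$ (i) is a direct estimate, while (i) $\Rightarrow$ (ii) relies on a compactness argument combined with the convexity and closedness of $\mathcal M$.

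For (ii) $\Rightarrow$ (i), suppose (ii) holds with constant $C$ and fix $u_0 \in \mathcal M$. If $u_0 + \xi \in \mathcal M$ for some $\xi\in\R$, then applying (ii) to $u = u_0 + \xi$ and using $\grad(u_0+\xi)=\grad u_0$ gives
\[
\|u_0 + \xi\|_{L^p(\Omega)} \le C\big(\|\grad u_0\|_{L^p(\Omega)} + 1\big),
\]
whose right-hand side is a finite constant depending only on $u_0$. On the left, the reverse triangle inequality yields $\|u_0 + \xi\|_{L^p(\Omega)} \ge |\xi|\,|\Omega|^{1/p} - \|u_0\|_{L^p(\Omega)}$, so $|\xi|$ is bounded by a constant $C_0$ independent of $\xi$. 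This is exactly (i).

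For (i) $\Rightarrow$ (ii), I argue by contradiction. If (ii) fails, there is a sequence $(u_k)\subset\mathcal M$ with $\|u_k\|_{L^p(\Omega)} > k\big(\|\grad u_k\|_{L^p(\Omega)}+1\big)$; in particular $\|u_k\|_{L^p(\Omega)} > k \to \infty$. Setting $v_k := u_k / \|u_k\|_{L^p(\Omega)}$, one has $\|v_k\|_{L^p(\Omega)} = 1$ and $\|\grad v_k\|_{L^p(\Omega)} < 1/k \to 0$, so $(v_k)$ is bounded in $W^{1,p}(\Omega)$. Since $1<p<\infty$ the space is reflexive, and as $\Omega$ is bounded with Lipschitz boundary the Rellich–Kondrachov theorem yields a subsequence with $v_k \wto v$ in $W^{1,p}(\Omega)$ and $v_k \to v$ strongly in $L^p(\Omega)$. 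Thus $\|v\|_{L^p(\Omega)} = 1$, while weak lower semicontinuity of the gradient norm forces $\|\grad v\|_{L^p(\Omega)} = 0$. Because $\Omega$ is connected, $v$ is a constant $c$ with $|c|\,|\Omega|^{1/p} = 1$, so $c \ne 0$.

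The decisive step, where convexity and closedness of $\mathcal M$ enter, is to produce unbounded constants inside $\mathcal M$. For any $\lambda>0$ and all $k$ large enough that $\lambda/\|u_k\|_{L^p(\Omega)} \le 1$, convexity gives
\[
w_k := \Big(1 - \tfrac{\lambda}{\|u_k\|_{L^p(\Omega)}}\Big) u_0 + \tfrac{\lambda}{\|u_k\|_{L^p(\Omega)}}\, u_k \in \mathcal M.
\]
Rewriting $w_k = u_0 - \tfrac{\lambda}{\|u_k\|_{L^p(\Omega)}} u_0 + \lambda v_k$ and using $v_k \to c$ strongly in $L^p(\Omega)$, $\grad v_k \to 0$ strongly in $L^p(\Omega)$, and $\|u_k\|_{L^p(\Omega)}\to\infty$, one obtains $w_k \to u_0 + \lambda c$ strongly in $W^{1,p}(\Omega)$. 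Since $\mathcal M$ is closed, the limit $u_0 + \lambda c$ lies in $\mathcal M$, i.e. $u_0 + \xi \in \mathcal M$ with $\xi = \lambda c$. By (i) this forces $|\lambda c| \le C_0$ for every $\lambda>0$, which is impossible as $c \ne 0$. This contradiction establishes (ii). The main obstacle is precisely this last paragraph: the compactness argument only delivers a nonzero constant limit, and the real content is recognizing that the convex combinations $w_k$ converge \emph{strongly} in $W^{1,p}(\Omega)$ (which crucially uses $\grad v_k \to 0$ in norm, not merely weakly) to an element $u_0 + \lambda c$ of the closed set $\mathcal M$, so that (i) can be violated by letting $\lambda \to \infty$.
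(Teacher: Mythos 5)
Your proof is correct. Note, however, that the paper does not prove this lemma at all: it is stated in the appendix purely for the reader's convenience, with a citation to H.\,W. Alt's textbook, so there is no in-paper argument to compare against. Your contradiction-plus-compactness argument --- normalize a failing sequence $u_k$, extract a Rellich--Kondrachov limit of $v_k = u_k/\|u_k\|_{L^p(\Omega)}$, identify the limit as a nonzero constant $c$ using connectedness of $\Omega$, and then use convexity and closedness of $\mathcal M$ to place $u_0 + \lambda c$ in $\mathcal M$ for arbitrarily large $\lambda$, contradicting (i) --- is the standard proof of such generalized Poincar\'e inequalities, and in substance it is the argument given in the cited reference. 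The two points that genuinely require care are handled correctly: the convex combinations $w_k$ converge \emph{strongly} in $W^{1,p}(\Omega)$ precisely because $\grad v_k \to 0$ in $L^p$-norm rather than merely weakly, and the closedness of $\mathcal M$ then legitimizes passing to the limit; the easy direction (ii) $\Rightarrow$ (i) via the reverse triangle inequality is also complete. No gaps.
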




\footnotesize
\bibliographystyle{plain}
\bibliography{chdbc}%


\end{document}